\newtheorem{theorem}{Theorem}[section]
\newtheorem{corollary}{Corollary}[section]
\theoremstyle{definition}
\newtheorem{definition}{Definition}[section]
\newtheorem{algorithm}{Algorithm}[section]
\newtheorem{remark}{Remark}[section]
\newtheorem{warning}{Warning}[section]
\newtheorem{example}{Example}[section]
\numberwithin{equation}{section}
\begin{document}


\title{Tables of pure quintic fields}

\author{Daniel C. Mayer}
\address{Naglergasse 53\\8010 Graz\\Austria}
\email{algebraic.number.theory@algebra.at}
\urladdr{http://www.algebra.at}

\thanks{Research supported by the Austrian Science Fund (FWF): projects J 0497-PHY and P 26008-N25}

\subjclass[2000]{Primary 11R20, secondary 11R27, 11R29}
\keywords{Pure quintic fields, pure metacyclic fields, units, Galois cohomology,
differential principal factorization types, similarity classes, prototypes, class group structure}

\date{December 05, 2018}

\dedicatory{Dedicated to the memory of Charles John Parry}


\begin{abstract}
By making use of our generalization of
Barrucand and Cohn's theory of principal factorizations
in pure cubic fields \(\mathbb{Q}(\sqrt[3]{D})\)
and their Galois closures \(\mathbb{Q}(\zeta_3,\sqrt[3]{D})\)
with \(3\) possible types
to pure quintic fields \(L=\mathbb{Q}(\sqrt[5]{D})\)
and their pure metacyclic normal fields \(N=\mathbb{Q}(\zeta_5,\sqrt[5]{D})\)
with \(13\) possible types,
we compile an extensive database with arithmetical invariants
of the \(900\) pairwise non-isomorphic fields \(N\)
having normalized radicands in the range \(2\le D<10^3\).
Our classification is based on the Galois cohomology of the unit group \(U_N\),
viewed as a module over the automorphism group \(\mathrm{Gal}(N/K)\)
of \(N\) over the cyclotomic field \(K=\mathbb{Q}(\zeta_5)\),
by employing theorems of Hasse and Iwasawa on the Herbrand quotient
of the unit norm index \((U_K:N_{N/K}(U_N))\)
by the number \(\#(\mathcal{P}_{N/K}/\mathcal{P}_K)\) of primitive ambiguous principal ideals,
which can be interpreted as principal factors of the different \(\mathfrak{D}_{N/K}\).
The precise structure of the \(\mathbb{F}_5\)-vector space of differential principal factors
is expressed in terms of norm kernels
and central orthogonal idempotents.
A connection with integral representation theory is established via
class number relations by Parry and Walter
involving the index of subfield units \((U_N:U_0)\).
The statistical distribution of the \(13\) principal factorization types
and their refined splitting into similarity classes with representative prototypes
is discussed thoroughly.
\end{abstract}

\maketitle


\section{Introduction}
\label{s:Intro}
\noindent
At the end of his \(1975\) article on class numbers of pure quintic fields,
Parry suggested verbatim
\lq\lq In conclusion the author would like to say that he believes
a numerical study of pure quintic fields would be most interesting\rq\rq\
\cite[p. 484]{Pa}.
Of course, it would have been rather difficult
to realize Parry's desire in \(1975\).
But now, \(40\) years later, we are in the position to use
the powerful computer algebra systems PARI/GP
\cite{PARI}
and MAGMA
\cite{BCP,BCFS,MAGMA}
for starting an attack against this hard problem.
Prepared by
\cite{Ma2a,Ma2b},
this will actually be done in the present paper.

Even in \(1991\), when we generalized
Barrucand and Cohn's theory
\cite{BaCo2}
of principal factorization types
from pure cubic fields \(\mathbb{Q}(\sqrt[3]{D})\)
to pure quintic fields \(L=\mathbb{Q}(\sqrt[5]{D})\)
and their pure metacyclic normal closures \(N=\mathbb{Q}(\zeta_5,\sqrt[5]{D})\)
\cite{Ma0},
it was still impossible to verify our hypothesis
about the distinction between
absolute, intermediate and relative \textit{differential principal factors} (DPF)
\cite[(6.3)]{Ma2a}
and about the values of the \textit{unit norm index} \((U_K:N_{N/K}(U_N))\)
\cite[(1.3)]{Ma2a}
by actual computations.


All these conjectures have been proven by our most recent numerical investigations.
Our classification is based on the Hasse-Iwasawa theorem
about the Herbrand quotient of the unit group \(U_N\) of the Galois closure \(N\) of \(L\)
as a module over the relative group \(G=\mathrm{Gal}(N/K)\)
with respect to the cyclotomic subfield \(K=\mathbb{Q}(\zeta_5)\).
It only involves the unit norm index \((U_K:N_{N/K}(U_N))\) and
our \(13\) types of differential principal factorizations
\cite[Thm. 1.3]{Ma2a},
but not the index of subfield units \((U_N:U_0)\)
\cite[\S\ 5]{Ma2a}
in Parry's class number formula
\cite[(5.1)]{Ma2a}.

We begin with a collection of explicit multiplicity formulas in \S\
\ref{s:Formulas}
which are required for understanding
the subsequent extensive presentation of our computational results
in twenty tables of crucial invariants in \S\
\ref{s:Tables}.
This information admits the classification
of all \(900\) pure quintic fields \(L=\mathbb{Q}(\root{5}\of{D})\)
with normalized radicands \(2\le D<10^3\)
into \(13\) DPF types
and the refined classification into
similarity classes with representative prototypes in \S\
\ref{s:Conclusions}.

In these final conclusions,
we collect theoretical consequences of our experimental results
and draw the attention to remaining open questions.


\section{Collection of Multiplicity Formulas}
\label{s:Formulas}
\noindent
For the convenience of the reader,
we provide a summary of formulas
for calculating invariants of
pure quintic fields \(L=\mathbb{Q}(\sqrt[5]{D})\)
with normalized fifth power free radicands \(D>1\)
and their associated pure metacyclic normal fields \(N=\mathbb{Q}(\zeta,\sqrt[5]{D})\)
with a primitive fifth root of unity \(\zeta=\zeta_5\).

Let \(f\) be the class field theoretic conductor of the
relatively quintic Kummer extension \(N/K\)
over the cyclotomic field \(K=\mathbb{Q}(\zeta)\).
It is also called the conductor of the pure quintic field \(L\).
The \textit{multiplicity} \(m=m(f)\) of the conductor \(f\) indicates
the number of non-isomorphic pure metacyclic fields \(N\) sharing the common conductor \(f\),
or also, according to
\cite[Prop. 2.1]{Ma2a},
the number of normalized fifth power free radicands \(D>1\)
whose fifth roots generate 
non-isomorphic pure quintic fields \(L\) sharing the common conductor \(f\).

We adapt the general multiplicity formulas in
\cite[Thm. 2, p. 104]{Ma1}
to the quintic case \(p=5\).
If \(L\) is a field of species \(1\mathrm{a}\)
\cite[(2.6) and Exm. 2.2]{Ma2a},
i.e. \(f^4=5^6\cdot q_1^4\cdots q_t^4\),
then \(m=4^t\) where
\(t:=\#\lbrace q\in\mathbb{P}\mid q\ne 5,\ q\mid f\rbrace\).
The explicit values of \(m\) in dependence on \(t\) are given in Table
\ref{tbl:Multiplicity1a}.


\renewcommand{\arraystretch}{1.0}

\begin{table}[ht]
\caption{Multiplicity of fields of species \(1\mathrm{a}\)}
\label{tbl:Multiplicity1a}
\begin{center}
\begin{tabular}{|c||rrrrrr|}
\hline
 \(t\) & \(0\) & \(1\) &  \(2\) &  \(3\) &   \(4\) &    \(5\) \\
\hline
 \(m\) & \(1\) & \(4\) & \(16\) & \(64\) & \(256\) & \(1024\) \\
\hline
\end{tabular}
\end{center}
\end{table}


\noindent
If \(L\) is a field of species \(1\mathrm{b}\)
\cite[(2.6) and Exm. 2.2]{Ma2a},
i.e. \(f^4=5^2\cdot q_1^4\cdots q_t^4\),
then \(m=4^u\cdot X_v\) where
\(u:=\#\lbrace q\in\mathbb{P}\mid q\equiv\pm 1,\pm 7\,(\mathrm{mod}\,25),\ q\mid f\rbrace\),
\(v:=t-u\)
and \(X_j:=\frac{1}{5}\lbrack 4^j-(-1)^j\rbrack\),
that is \((X_j)_{j\ge -1}=(\frac{1}{4},0,1,3,13,51,205,\ldots)\).
The explicit values of \(m\) in dependence on \(u\) and \(v\) are given in Table
\ref{tbl:Multiplicity1b}.


\renewcommand{\arraystretch}{1.0}

\begin{table}[ht]
\caption{Multiplicity of fields of species \(1\mathrm{b}\)}
\label{tbl:Multiplicity1b}
\begin{center}
\begin{tabular}{|c||c|rrrrrr|}
\hline
 \(u\) & \(v\) & \(0\) &   \(1\) &   \(2\) &   \(3\) &   \(4\) &   \(5\) \\
\hline
 \(0\) &  \(\) & \(0\) &   \(1\) &   \(3\) &  \(13\) &  \(51\) & \(205\) \\
 \(1\) &  \(\) & \(0\) &   \(4\) &  \(12\) &  \(52\) & \(204\) & \(820\) \\
 \(2\) &  \(\) & \(0\) &  \(16\) &  \(48\) & \(208\) & \(816\) &    \(\) \\
 \(3\) &  \(\) & \(0\) &  \(64\) & \(192\) & \(832\) &    \(\) &    \(\) \\
 \(4\) &  \(\) & \(0\) & \(256\) & \(768\) &    \(\) &    \(\) &    \(\) \\
\hline
\end{tabular}
\end{center}
\end{table}


\noindent
If \(L\) is a field of species \(2\)
\cite[(2.6) and Exm. 2.2]{Ma2a},
i.e. \(f^4=5^0\cdot q_1^4\cdots q_t^4\),
then \(m=4^u\cdot X_{v-1}\) where
\(u:=\#\lbrace q\in\mathbb{P}\mid q\equiv\pm 1,\pm 7\,(\mathrm{mod}\,25),\ q\mid f\rbrace\),
\(v:=t-u\)
and \(X_j:=\frac{1}{5}\lbrack 4^j-(-1)^j\rbrack\),
that is \((X_j)_{j\ge -1}=(\frac{1}{4},0,1,3,13,51,205,\ldots)\).
The explicit values of \(m\) in dependence on \(u\) and \(v\) are given in Table
\ref{tbl:Multiplicity2}.


\renewcommand{\arraystretch}{1.0}

\begin{table}[ht]
\caption{Multiplicity of fields of species \(2\)}
\label{tbl:Multiplicity2}
\begin{center}
\begin{tabular}{|c||c|rrrrrr|}
\hline
 \(u\) & \(v\) &  \(0\) & \(1\) &   \(2\) &   \(3\) &   \(4\) &   \(5\) \\
\hline
 \(0\) &  \(\) &  \(0\) & \(0\) &   \(1\) &   \(3\) &  \(13\) &  \(51\) \\
 \(1\) &  \(\) &  \(1\) & \(0\) &   \(4\) &  \(12\) &  \(52\) & \(204\) \\
 \(2\) &  \(\) &  \(4\) & \(0\) &  \(16\) &  \(48\) & \(208\) & \(816\) \\
 \(3\) &  \(\) & \(16\) & \(0\) &  \(64\) & \(192\) & \(832\) &    \(\) \\
 \(4\) &  \(\) & \(64\) & \(0\) & \(256\) & \(768\) &    \(\) &    \(\) \\
\hline
\end{tabular}
\end{center}
\end{table}


\section{Classification by DPF types in \(20\) numerical tables}
\label{s:Tables}

\subsection{DPF types}
\label{ss:DPFTypes}

The following twenty Tables
\ref{tbl:PureQuinticFields50}
--
\ref{tbl:PureQuinticFields1000}
establish a complete classification of all \(900\)
pure metacyclic fields \(N=\mathbb{Q}(\zeta,\root{5}\of{D})\)
with normalized radicands in the range \(2\le D\le 10^3\).
With the aid of PARI/GP
\cite{PARI}
and MAGMA
\cite{MAGMA}
we have determined the \textit{differential principal factorization type}, T,
of each field \(N\)
by means of other invariants \(U,A,I,R\)
\cite[Thm. 6.1]{Ma2a}.
After several weeks of CPU time,
the date of completion was September \(17\), \(2018\).

The possible DPF types are listed in dependence on \(U,A,I,R\) in Table
\ref{tbl:DPFTypes},
where the symbol \(\times\) in the column \(\eta\), resp. \(\zeta\),
indicates the existence of a unit \(H\in U_N\), resp. \(Z\in U_N\),
such that \(\eta=N_{N/K}(H)\), resp. \(\zeta=N_{N/K}(Z)\).
The \(5\)-valuation of the \textit{unit norm index} \((U_K:N_{N/K}{U_N})\) is abbreviated by \(U\)
\cite[(1.3), (6.3)]{Ma2a}.


\renewcommand{\arraystretch}{1.0}

\begin{table}[ht]
\caption{Differential principal factorization types, T, of pure metacyclic fields \(N\)}
\label{tbl:DPFTypes}
\begin{center}
\begin{tabular}{|c|ccc|ccc|}
\hline
 T               & \(U\) & \(\eta\)   & \(\zeta\)  & \(A\) & \(I\) & \(R\) \\
\hline
 \(\alpha_1\)    & \(2\) & \(-\)      & \(-\)      & \(1\) & \(0\) & \(2\) \\
 \(\alpha_2\)    & \(2\) & \(-\)      & \(-\)      & \(1\) & \(1\) & \(1\) \\
 \(\alpha_3\)    & \(2\) & \(-\)      & \(-\)      & \(1\) & \(2\) & \(0\) \\
 \(\beta_1\)     & \(2\) & \(-\)      & \(-\)      & \(2\) & \(0\) & \(1\) \\
 \(\beta_2\)     & \(2\) & \(-\)      & \(-\)      & \(2\) & \(1\) & \(0\) \\
 \(\gamma\)      & \(2\) & \(-\)      & \(-\)      & \(3\) & \(0\) & \(0\) \\
\hline
 \(\delta_1\)    & \(1\) & \(\times\) & \(-\)      & \(1\) & \(0\) & \(1\) \\
 \(\delta_2\)    & \(1\) & \(\times\) & \(-\)      & \(1\) & \(1\) & \(0\) \\
 \(\varepsilon\) & \(1\) & \(\times\) & \(-\)      & \(2\) & \(0\) & \(0\) \\
\hline
 \(\zeta_1\)     & \(1\) & \(-\)      & \(\times\) & \(1\) & \(0\) & \(1\) \\
 \(\zeta_2\)     & \(1\) & \(-\)      & \(\times\) & \(1\) & \(1\) & \(0\) \\
 \(\eta\)        & \(1\) & \(-\)      & \(\times\) & \(2\) & \(0\) & \(0\) \\
\hline
 \(\vartheta\)   & \(0\) & \(\times\) & \(\times\) & \(1\) & \(0\) & \(0\) \\
\hline
\end{tabular}
\end{center}
\end{table}


\subsection{Justification of the computational techniques}
\label{ss:Techniques}

The steps of the following classification algorithm
are ordered by increasing requirements of CPU time.
To avoid unnecessary time consumption,
the algorithm stops at early stages already,
as soon as the DPF type is determined unambiguously.
The illustrating subfield lattice of \(N\) is drawn in Figure
\ref{fig:GaloisCorrespondence}
at the end of the paper.

\begin{algorithm}
\label{alg:Classification}
(Classification into \(13\) DPF types.)\\
\textbf{Input:}
a normalized fifth power free radicand \(D\ge 2\). \\
\textbf{Step 1:}
By purely \textit{rational} methods, without any number field constructions,
the prime factorization of the radicand \(D\) 
(including the counters \(t,u,v;n,s_2,s_4\), \S\
\ref{ss:Prototypes})
is determined.
If \(D=q\in\mathbb{P}\), \(q\equiv\pm 2\,(\mathrm{mod}\,5)\), \(q\not\equiv\pm 7\,(\mathrm{mod}\,25)\),
then \(N\) is a Polya field of type \(\varepsilon\); stop.
If \(D=q\in\mathbb{P}\), \(q=5\) or \(q\equiv\pm 7\,(\mathrm{mod}\,25)\),
then \(N\) is a Polya field of type \(\vartheta\); stop. \\
\textbf{Step 2:}
The field \(L\) of degree \(5\) is constructed.
The primes \(q_1,\ldots,q_T\) dividing the conductor \(f\) of \(N/K\) are determined,
and their overlying prime ideals \(\mathfrak{q}_1,\ldots,\mathfrak{q}_T\) in \(L\) are computed.
By means of at most \(5^T\) principal ideal tests of the elements of
\(\mathcal{I}_{L/\mathbb{Q}}/\mathcal{I}_{\mathbb{Q}}=\bigoplus_{i=1}^T\,\mathbb{F}_5\,\mathfrak{q}_i\),
the number
\(5^A:=\#\lbrace (v_1,\ldots,v_T)\in\mathbb{F}_5^T\mid\prod_{i=1}^T\,\mathfrak{q}_i^{v_i}\in\mathcal{P}_{L}\rbrace\),
that is the cardinality of \(\mathcal{P}_{L/\mathbb{Q}}/\mathcal{P}_{\mathbb{Q}}\), is determined.
If \(A=T\), then \(N\) is a Polya field.
If \(A=3\), then \(N\) is of type \(\gamma\); stop.
If \(A=2\), \(s_2=s_4=0\), \(v\ge 1\), then \(N\) is of type \(\varepsilon\); stop.
If \(A=1\), \(s_2=s_4=0\), then \(N\) is of type \(\vartheta\); stop. \\
\textbf{Step 3:}
If \(s_2\ge 1\) or \(s_4\ge 1\),
then the field \(M\) of degree \(10\) is constructed.
For the \(2\)-split primes \(\ell_1,\ldots,\ell_{s_2+s_4}\equiv\pm 1\,(\mathrm{mod}\,5)\)
among the primes \(q_1,\ldots,q_T\) dividing the conductor \(f\) of \(N/K\),
the overlying prime ideals
\(\mathcal{L}_1,\mathcal{L}_1^\tau,\ldots,\mathcal{L}_{s_2+s_4},\mathcal{L}_{s_2+s_4}^\tau\)
in \(M\) are computed.
By means of at most \(5^{s_2+s_4}\) principal ideal tests of the elements of
\((\mathcal{I}_{M/K^+}/\mathcal{I}_{K^+})\bigcap\ker(N_{M/L})
=\bigoplus_{i=1}^{s_2+s_4}\,\mathbb{F}_5\,\mathcal{K}_{(\ell_i)}\),
where \(\mathcal{K}_{(\ell_i)}=\mathcal{L}_i^{1+4\tau}\)
for \(1\le i\le s_2+s_4\),
the number
\(5^I:=\#\lbrace (v_1,\ldots,v_{s_2+s_4})\in\mathbb{F}_5^{s_2+s_4}\mid\prod_{i=1}^{s_2+s_4}\,\mathcal{K}_{(\ell_i)}^{v_i}\in\mathcal{P}_{M}\rbrace\),
that is the cardinality of \((\mathcal{P}_{M/K^+}/\mathcal{P}_{K^+})\bigcap\ker(N_{M/L})\), is determined.
If \(I=2\), then \(N\) is of type \(\alpha_3\); stop.
If \(I=1\), \(A=2\), then \(N\) is of type \(\beta_2\); stop. \\
\textbf{Step 4:}
If \(s_4\ge 1\),
then the field \(N\) of degree \(20\) is constructed.
For all \(4\)-split primes \(\ell_{s_2+1},\ldots,\ell_{s_2+s_4}\) \(\equiv +1\,(\mathrm{mod}\,5)\)
among the primes \(q_1,\ldots,q_T\) dividing the conductor \(f\) of \(N/K\),
the overlying prime ideals
\(\mathfrak{L}_{s_2+1},\mathfrak{L}_{s_2+1}^{\tau^2},\mathfrak{L}_{s_2+1}^{\tau},\mathfrak{L}_{s_2+1}^{\tau^3},
\ldots,\mathfrak{L}_{s_2+s_4},\mathfrak{L}_{s_2+s_4}^{\tau^2},\mathfrak{L}_{s_2+s_4}^{\tau},\mathfrak{L}_{s_2+s_4}^{\tau^3}\)
in \(N\) are computed.
By means of at most \(5^{2s_4}\) principal ideal tests of the elements of
\((\mathcal{I}_{N/K}/\mathcal{I}_{K})\bigcap\ker(N_{N/M})
=\bigoplus_{i=s_2+1}^{s_2+s_4}\,\left(\mathbb{F}_5\,\mathfrak{K}_{1,(\ell_i)}\right.\) \(\left.\oplus\mathbb{F}_5\,\mathfrak{K}_{2,(\ell_i)}\right)\),
where
\(\mathfrak{K}_{1,(\ell_i)}=\mathfrak{L}_i^{1+4\tau^2+2\tau+3\tau^3}\)
and
\(\mathfrak{K}_{2,(\ell_i)}=\mathfrak{L}_i^{1+4\tau^2+3\tau+2\tau^3}\)
for \(s_2+1\le i\le s_2+s_4\),
the number
\(5^R:=\#\lbrace (v_{1,s_2+1},v_{2,s_2+1},\ldots,v_{1,s_2+s_4},v_{2,s_2+s_4})\in\mathbb{F}_5^{2s_4}\mid
\prod_{i=s_2+1}^{s_2+s_4}\,\left(\mathfrak{K}_{1,(\ell_i)}^{v_{1,i}}\mathfrak{K}_{2,(\ell_i)}^{v_{2,i}}\right)\in\mathcal{P}_{N}\rbrace\),
that is the cardinality of \((\mathcal{P}_{N/K}/\mathcal{P}_{K})\bigcap\ker(N_{N/M})\), is determined.
If \(R=2\), then \(N\) is of type \(\alpha_1\); stop.
If \(R=1\), \(I=1\), then \(N\) is of type \(\alpha_2\); stop.
If \(R=1\), \(A=2\), then \(N\) is of type \(\beta_1\); stop. \\
\textbf{Step 5:}
If the type of the field \(N\) is not yet determined uniquely,
then \(U=1\) and there remain the following possibilities.
If \(v\ge 1\), then
\(N\) is of type \(\delta_1\), if \(R=1\),
of type \(\delta_2\), if \(I=1\),
and of type \(\varepsilon\), if \(R=I=0\).
If \(v=0\), then
a fundamental system \((E_j)_{1\le j\le 9}\) of units is constructed
for the unit group \(U_N\) of the field \(N\) of degree \(20\),
and all relative norms of these units with respect to the cyclotomic subfield \(K\) are computed.
If \(N_{N/K}(E_j)=\zeta_5^k\) for some \(1\le j\le 9\), \(1\le k\le 4\), then
\(N\) is of type \(\zeta_1\), if \(R=1\),
of type \(\zeta_2\), if \(I=1\),
and of type \(\eta\), if \(R=I=0\).
Otherwise the conclusions are the same as for \(v\ge 1\).
\\
\textbf{Output:}
the DPF type of the field \(N=\mathbb{Q}(\zeta_5,\sqrt[5]{D})\)
and the decision about its Polya property.
\end{algorithm}

\begin{proof}
The claims of Step 1 concerning the types \(\varepsilon,\vartheta\) are proved in items (1) and (2) of
\cite[Thm. 10.1]{Ma2a}.

For Step 2, the formulas (4.1) and (4.2) in
\cite[Thm. 4.1]{Ma2a}
give an \(\mathbb{F}_5\)-basis of the space of absolute differential factors,
and the formulas (4.3) and (4.4) in
\cite[Cor. 4.1]{Ma2a}
determine bounds for the \(\mathbb{F}_5\)-dimension \(A\)
of the space of \textit{absolute} DPF
in the field \(L\) of degree \(5\).
The Polya property was characterized in
\cite[Thm. 10.5]{Ma2a},
the claim concerning type \(\gamma\) follows from
\cite[Thm. 6.1]{Ma2a},
and the claims about the types \(\varepsilon,\vartheta\) from
\cite[Thm. 8.1 and Thm. 6.1]{Ma2a}.

For Step 3, the formulas (4.5) and (4.6) in
\cite[Thm. 4.3]{Ma2a}
give an \(\mathbb{F}_5\)-basis of the space of intermediate differential factors,
and the formulas (4.7) and (4.8) in
\cite[Cor. 4.2]{Ma2a}
determine bounds for the \(\mathbb{F}_5\)-dimension \(I\)
of the space of \textit{intermediate} DPF
in the field \(M\) of degree \(10\).
The claims concerning the types \(\alpha_3,\beta_2\) are consequences of
\cite[Thm. 6.1]{Ma2a},

For Step 4, the formulas (4.9) and (4.10) in
\cite[Thm. 4.4]{Ma2a}
give an \(\mathbb{F}_5\)-basis of the space of relative differential factors,
and the formulas (4.11) and (4.12) in
\cite[Cor. 4.3]{Ma2a}
determine bounds for the \(\mathbb{F}_5\)-dimension \(R\)
of the space of \textit{relative} DPF
in the field \(N\) of degree \(20\).
The claims concerning the types \(\alpha_1,\alpha_2,\beta_1\) are consequences of
\cite[Thm. 6.1]{Ma2a}.

Concerning Step 5,
the signature of \(N\) is \((r_1,r_2)=(0,10)\),
whence the torsion free Dirichlet unit rank of \(N\) is given by \(r=r_1+r_2-1=9\). 
The claims about all types are consequences of
\cite[Thm. 6.1]{Ma2a},
including information on the constitution of the norm group \(N_{N/K}(U_N)\).
\end{proof}

\begin{remark}
\label{rmk:Classification}
Whereas the execution of Step 1 and 2 in our Algorithm
\ref{alg:Classification},
implemented as a Magma program script
\cite{MAGMA},
is a matter of a few seconds
on a machine with clock frequency at least \(2\,\)GHz,
the CPU time for Step 3 lies in the range of several minutes.
The time requirement for Step 4 and 5 can reach hours or even days
in spite of code optimizations for the calculation of units,
in particular the use of the Magma procedures
\texttt{IndependentUnits()} and \texttt{SetOrderUnitsAreFundamental()}
prior to the call of \texttt{UnitGroup()}.
\end{remark}


\subsection{Open problems}
\label{ss:OpenQuestions}

We conjecture that considerable amounts of CPU time can be saved in our Algorithm
\ref{alg:Classification}
by computing the logarithmic \(5\)-class numbers \(V_F:=v_5(h_F)\)
of the fields \(F\in\lbrace L,M,N\rbrace\),
which admit the determination of the
logarithmic indices \(E\), resp. \(E^+\), of subfield units
in the Parry
\cite{Pa}, 
resp. Kobayashi
\cite{Ky1,Ky2},
class number relation, according to the formulas
\begin{equation}
\label{eqn:LogInd}
E=5+V_N-4\cdot V_L, \qquad E^+=2+V_M-2\cdot V_L.
\end{equation}
However, first there would be required rigorous proofs of
the heuristic connections between \(E,E^+\) and the DPF types in Table
\ref{tbl:LogInd},
where \((E,E^+)=(1,0)\) implies type \(\alpha_2\),
\((E,E^+)=(2,0)\) implies type \(\alpha_3\),
\((E,E^+)=(4,2)\) implies type \(\delta_1\),
but
\((E,E^+)=(2,1)\) admits types \(\alpha_1,\delta_1\),
\((E,E^+)=(3,1)\) admits types \(\alpha_2,\beta_1,\delta_2\),
\((E,E^+)=(4,1)\) admits types \(\beta_2,\zeta_2\),
\((E,E^+)=(5,2)\) admits types \(\beta_1,\varepsilon,\zeta_1,\vartheta\), and
\((E,E^+)=(6,2)\) admits types \(\gamma,\eta\).
\((E,E^+)=(0,0)\) seems to be impossible.


\renewcommand{\arraystretch}{1.0}

\begin{table}[ht]
\caption{Logarithmic indices \(E,E^+\) of subfield units for DPF types, T}
\label{tbl:LogInd}
\begin{center}
\begin{tabular}{|c|cc|c|cc|}
\hline
 T               & \(E\) & \(E^+\) & or & \(E\) & \(E^+\) \\
\hline
 \(\alpha_1\)    & \(2\) & \(1\)   &    & \(\)  & \(\)    \\
 \(\alpha_2\)    & \(1\) & \(0\)   &    & \(3\) & \(1\)   \\
 \(\alpha_3\)    & \(2\) & \(0\)   &    & \(\)  & \(\)    \\
 \(\beta_1\)     & \(3\) & \(1\)   &    & \(5\) & \(2\)   \\
 \(\beta_2\)     & \(4\) & \(1\)   &    & \(\)  & \(\)    \\
 \(\gamma\)      & \(6\) & \(2\)   &    & \(\)  & \(\)    \\
\hline
 \(\delta_1\)    & \(2\) & \(1\)   &    & \(4\) & \(2\)   \\
 \(\delta_2\)    & \(3\) & \(1\)   &    & \(\)  & \(\)    \\
 \(\varepsilon\) & \(5\) & \(2\)   &    & \(\)  & \(\)    \\
\hline
 \(\zeta_1\)     & \(5\) & \(2\)   &    & \(\)  & \(\)    \\
 \(\zeta_2\)     & \(4\) & \(1\)   &    & \(\)  & \(\)    \\
 \(\eta\)        & \(6\) & \(2\)   &    & \(\)  & \(\)    \\
\hline
 \(\vartheta\)   & \(5\) & \(2\)   &    & \(\)  & \(\)    \\
\hline
\end{tabular}
\end{center}
\end{table}


\subsection{Conventions and notation in the tables}
\label{ss:Conventions}

The \textit{normalized} radicand \(D=q_1^{e_1}\cdots q_s^{e_s}\)
of a pure metacyclic field \(N\) of degree \(20\)
is minimal among the powers \(D^n\), \(1\le n\le 4\),
with corresponding exponents \(e_j\) reduced modulo \(5\). 
The normalization of the radicands \(D\) provides a warranty that
all fields are pairwise non-isomorphic
\cite[Prop. 2.1]{Ma2a}.

Prime factors are given for composite radicands \(D\) only. 
Dedekind's \textit{species}, S, of radicands is refined by
distinguishing \(5\mid D\) (species 1a) and \(\gcd(5,D) = 1\) (species 1b)
among radicands \(D\not\equiv\pm 1,\pm 7\,(\mathrm{mod}\,25)\) (species 1).
By the species and factorization of \(D\),
the shape of the \textit{conductor} \(f\) is determined.
We give the fourth power \(f^4\) to avoid fractional exponents.
Additionally, the \textit{multiplicity} \(m\) indicates
the number of non-isomorphic fields sharing a common conductor \(f\) (\S
\ref{s:Formulas}).
The symbol \(V_F\) briefly denotes the \(5\)-valuation of the order \(h_F=\#\mathrm{Cl}(F)\)
of the class group \(\mathrm{Cl}(F)\) of a number field \(F\).
By \(E\) we denote the exponent of the power in the \textit{index of subfield units} \((U_N:U_0)=5^E\). 

An asterisk denotes the smallest radicand
with given Dedekind kind, DPF type and \(5\)-class groups \(\mathrm{Cl}_5(F)\), \(F\in\lbrace L,M,N\rbrace\).
The latter are usually elementary abelian, except for the cases indicated by an additional asterisk (see \S\
\ref{ss:NonElem}).

Principal factors, P, are listed
when their constitution is not a consequence of the other information.
According to
\cite[Thm. 7.2., item (1)]{Ma2a}
it suffices to give the rational integer norm of \textit{absolute} principal factors.
For \textit{intermediate} principal factors, we use the symbols
\(\mathcal{K}:=\mathcal{L}^{1-\tau}=\alpha\mathcal{O}_M\) with \(\alpha\in M\)
or \(\mathcal{L}=\lambda\mathcal{O}_M\) with a prime element \(\lambda\in M\)
(which implies \(\mathcal{L}^\tau=\lambda^\tau\mathcal{O}_M\) and thus also \(\mathcal{K}=\lambda^{1-\tau}\mathcal{O}_M\)).
Here, \((\mathcal{L}^{1+\tau})^5=\ell\mathcal{O}_M\)
when a prime \(\ell\equiv\pm 1\,(\mathrm{mod}\,5)\) divides the radicand \(D\).
For \textit{relative} principal factors, we use the symbols
\(\mathfrak{K}_1:=\mathfrak{L}^{1+4\tau^2+2\tau+3\tau^3}=A_1\mathcal{O}_N\)
and
\(\mathfrak{K}_2:=\mathfrak{L}^{1+4\tau^2+3\tau+2\tau^3}=A_2\mathcal{O}_N\)
with \(A_1,A_2\in N\).
Here, \((\mathfrak{L}^{1+\tau+\tau^2+\tau^3})^5=\ell\mathcal{O}_N\)
when a prime number \(\ell\equiv +1\,(\mathrm{mod}\,5)\) divides the radicand \(D\).
(Kernel ideals in
\cite[\S\ 7]{Ma2a}.)

The quartet \((1,2,4,5)\) indicates conditions which
either enforce a reduction of possible DPF types
or enable certain DPF types.
The lack of a prime divisor \(\ell\equiv\pm 1\,(\mathrm{mod}\,5)\)
together with the existence of a prime divisor \(q\not\equiv\pm 7\,(\mathrm{mod}\,25)\) and \(q\ne 5\) of \(D\)
is indicated by a symbol \(\times\) for the component \(1\).
In these cases, only the two DPF types \(\gamma\) and \(\varepsilon\) can occur
\cite[Thm. 8.1]{Ma2a}.

A symbol \(\times\) for the component \(2\)
emphasizes a prime divisor \(\ell\equiv -1\,(\mathrm{mod}\,5)\) of \(D\)
and the possibility of intermediate principal factors in \(M\), like \(\mathcal{L}\) and \(\mathcal{K}\).
A symbol \(\times\) for the component \(4\)
emphasizes a prime divisor \(\ell\equiv +1\,(\mathrm{mod}\,5)\) of \(D\)
and the possibility of relative principal factors in \(N\), like \(\mathfrak{K}_1\) and \(\mathfrak{K}_2\).
The \(\times\) symbol is replaced by \(\otimes\) if the facility is used completely,
and by \((\times)\) if the facility is only used partially.

If \(D\) has only prime divisors \(q\equiv\pm 1,\pm 7\,(\mathrm{mod}\,25)\) or \(q=5\),
a symbol \(\times\) is placed in component \(5\).
In these cases, \(\zeta\) can occur as a norm \(N_{N/K}(Z)\) of some unit in \(Z\in U_N\).
If it actually does, the \(\times\) is replaced by \(\otimes\)
\cite[\S\ 8]{Ma2a}.

\newpage

\renewcommand{\arraystretch}{1.0}

\begin{table}[ht]
\caption{\(40\) pure metacyclic fields with normalized radicands \(2\le D\le 52\)}
\label{tbl:PureQuinticFields50}
\begin{center}

\end{center}
\end{table}

\newpage

\section{Statistical evaluation, refinements, and conclusions}
\label{s:Conclusions}

\subsection{Statistics of DPF types}
\label{ss:Statistics}
\noindent
The complete statistical evaluation of the preceding twenty Tables
\ref{tbl:PureQuinticFields50}
---
\ref{tbl:PureQuinticFields1000}
is given in Table
\ref{tbl:Statistics}.
The first ten columns show the absolute frequencies
of pure metacyclic fields \(N=\mathbb{Q}(\zeta,\sqrt[5]{D})\)
with various DPF types
for the ranges \(2\le D<n\cdot 100\) with \(1\le n\le 10\).
The eleventh column lists the relative percentages
of the five most frequent DPF types
for the complete range \(2\le D<10^3\) of normalized radicands.

Among our \(13\) differential principal factorization types,
type \(\gamma\) with \(3\)-dimensional absolute principal factorization, \(A=3\),
is clearly dominating with more than one third \((36\,\%)\) of all occurrences
in the complete range \(2\le D<10^3\),
followed by type \(\varepsilon\) with \(2\)-dimensional absolute principal factorization, \(A=2\),
which covers nearly one quarter \((23\,\%)\) of all cases.
The third place (nearly \(18\,\%\)) is occupied by type \(\beta_2\) with mixed 
absolute and intermediate principal factorization, \(A=2\), \(I=1\).

It is striking that
type \(\alpha_1\) with \(2\)-dimensional relative principal factorization, \(R=2\), and
type \(\alpha_3\) with \(2\)-dimensional intermediate principal factorization, \(I=2\),
are populated rather sparsely,
in favour of a remarkable contribution by type \(\alpha_2\) with mixed
intermediate and relative principal factorization, \(I=R=1\)
(place four with \(8\,\%\)).

The appearance of the four types \(\zeta_1\), \(\zeta_2\), \(\eta\), \(\vartheta\)
with norm representation \(N_{N/K}(Z)=\zeta\), \(Z\in U_N\),
of the primitive fifth root of unity \(\zeta=\zeta_5\)
is marginal
(\cite[Thm. 8.2]{Ma2a}),
in spite of the parametrized contribution
by all prime conductors \(f=q\equiv\pm 7\,(\mathrm{mod}\,25)\) to type \(\vartheta\),
as we shall prove in Theorem
\ref{thm:InfSimCls} (1)
in \S\
\ref{ss:Theorems}.


\renewcommand{\arraystretch}{1.0}

\begin{table}[ht]
\caption{Absolute frequencies of differential principal factorization types}
\label{tbl:Statistics}
\begin{center}
\begin{tabular}{|c|rrrrrrrrrr|r|}
\hline
 Type            &\(100\) &\(200\) &\(300\) &\(400\) &\(500\) &\(600\) &\(700\) &\(800\) &\(900\) &\(1000\)&   \(\%\) \\
\hline
 \(\alpha_1\)    &  \(1\) &  \(2\) &  \(3\) &  \(4\) &  \(5\) &  \(5\) &  \(5\) &  \(9\) &  \(9\) &  \(9\) & \\
 \(\alpha_2\)    & \(10\) & \(17\) & \(23\) & \(30\) & \(35\) & \(42\) & \(52\) & \(57\) & \(63\) & \(75\) &  \(8.3\) \\
 \(\alpha_3\)    &  \(0\) &  \(0\) &  \(0\) &  \(1\) &  \(1\) &  \(3\) &  \(5\) &  \(5\) &  \(7\) &  \(8\) & \\
 \(\beta_1\)     &  \(0\) &  \(2\) &  \(4\) &  \(7\) &  \(8\) & \(11\) & \(15\) & \(18\) & \(22\) & \(23\) & \\
 \(\beta_2\)     &  \(7\) & \(24\) & \(40\) & \(54\) & \(80\) & \(94\) &\(108\) &\(126\) &\(146\) &\(161\) & \(17.9\) \\
 \(\gamma\)      & \(25\) & \(55\) & \(88\) &\(117\) &\(148\) &\(187\) &\(222\) &\(259\) &\(290\) &\(324\) & \(36.0\) \\
\hline
 \(\delta_1\)    &  \(0\) &  \(0\) &  \(1\) &  \(1\) &  \(3\) &  \(4\) &  \(4\) &  \(4\) &  \(6\) &  \(7\) & \\
 \(\delta_2\)    &  \(8\) & \(14\) & \(19\) & \(23\) & \(31\) & \(35\) & \(38\) & \(44\) & \(51\) & \(53\) &  \(5.9\) \\
 \(\varepsilon\) & \(26\) & \(45\) & \(67\) & \(95\) &\(110\) &\(128\) &\(150\) &\(165\) &\(184\) &\(208\) & \(23.1\) \\
\hline
 \(\zeta_1\)     &  \(0\) &  \(1\) &  \(1\) &  \(1\) &  \(1\) &  \(1\) &  \(1\) &  \(1\) &  \(1\) &  \(1\) & \\
 \(\zeta_2\)     &  \(0\) &  \(0\) &  \(0\) &  \(0\) &  \(0\) &  \(1\) &  \(1\) &  \(4\) &  \(4\) &  \(5\) & \\
 \(\eta\)        &  \(1\) &  \(2\) &  \(4\) &  \(5\) &  \(5\) &  \(6\) &  \(6\) &  \(6\) &  \(6\) &  \(7\) & \\
\hline
 \(\vartheta\)   &  \(3\) &  \(6\) &  \(8\) &  \(9\) & \(11\) & \(13\) & \(15\) & \(17\) & \(18\) & \(19\) & \\
\hline
 Total           & \(81\) &\(168\) &\(258\) &\(347\) &\(438\) &\(530\) &\(622\) &\(715\) &\(807\) &\(900\) &\(100.0\) \\
\hline
\end{tabular}
\end{center}
\end{table}


\subsection{Similarity classes and prototypes}
\label{ss:Prototypes}
\noindent
In
\cite{Ma2b},
we came to the conviction
that for deeper insight into the arithmetical structure of the fields under investigation,
the prime factorization of the class field theoretic conductor \(f\)
of the abelian extension \(N/K\) over the cyclotomic field \(K=\mathbb{Q}(\zeta)\)
and the primary invariants of all involved \(5\)-class groups
must be taken in consideration.
These ideas have lead to the concept of
\textit{similarity classes} and representative \textit{prototypes},
which refines the differential principal factorization (DPF) types


Let \(t\) be the number of primes \(q_1,\ldots,q_t\in\mathbb{P}\)
distinct from \(5\) which divide the conductor \(f\).
Among these prime numbers, we separately count
\(u:=\#\lbrace 1\le i\le t\mid q_i\equiv\pm 1,\pm 7\,(\mathrm{mod}\,25)\rbrace\) free primes,
\(v:=t-u\) restrictive primes,
\(s_2:=\#\lbrace 1\le i\le t\mid q_i\equiv -1\,(\mathrm{mod}\,5)\rbrace\) \(2\)-split primes, and
\(s_4:=\#\lbrace 1\le i\le t\mid q_i\equiv +1\,(\mathrm{mod}\,5)\rbrace\) \(4\)-split primes.
The multiplicity \(m=m(f)\) is given in terms of \(t,u,v\), according to \S\
\ref{s:Formulas},
and the dimensions of various spaces of primitive ambiguous ideals over the finite field \(\mathbb{F}_5\)
are given in terms of \(t,s_2,s_4\), according to
\cite[\S\ 4]{Ma2a}.
By \(\eta=\frac{1}{2}(1+\sqrt{5})\) we denote the fundamental unit of \(K^+=\mathbb{Q}(\sqrt{5})\).
The dimensions of the spaces of absolute, intermediate and relative DPF
over \(\mathbb{F}_5\) are denoted by \(A\), \(I\) and \(R\),
identical with the additive (logarithmic) version in
\cite[Thm. 6.1]{Ma2a}.
Further, let \(M=\mathbb{Q}(\sqrt{5},\sqrt[5]{D})\) be the maximal real intermediate field of \(N/L\),
and denote by \(U_0\) the subgroup of the unit group \(U_N\) of \(N=\mathbb{Q}(\zeta,\sqrt[5]{D})\)
generated by the units of all conjugate fields of \(L=\mathbb{Q}(\sqrt[5]{D})\) and of \(K=\mathbb{Q}(\zeta)\),
where \(\zeta=\zeta_5\) is a primitive fifth root of unity.
For a number field \(F\),
let \(V_F:=v_5(\#\mathrm{Cl}(F))\)
be the \(5\)-valuation of the class number of \(F\).


\begin{definition}
\label{dfn:QuinticSimilarity}
A set of normalized fifth power free radicands \(D>1\)
is called a \textit{similarity class}
if the associated pure quintic fields \(L=\mathbb{Q}(\sqrt[5]{D})\)
share the following common multiplets of invariants:
\begin{itemize}
\item
the refined Dedekind species
\((e_0;t,u,v,m;n,s_2,s_4)\), where
\begin{equation}
\label{eqn:Dedekind5}
f^4=5^{e_0}\cdot q_1^4\cdots q_t^4 \quad \text{ with } \quad
e_0\in\lbrace 0,2,6\rbrace,\ t\ge 0,\ n=t-s_2-s_4,
\end{equation}
\item
the differential principal factorization type
\((U,\eta,\zeta;A,I,R)\), where
\begin{equation}
\label{eqn:AIR}
(U_K:N_{N/K}(U_N))=5^U \quad \text{ and } \quad U+1=A+I+R,
\end{equation}
\item
the structure of the \(5\)-class groups
\((V_L,V_M,V_N;E)\), where
\begin{equation}
\label{eqn:5ClsGrp}
(U_N:U_0)=5^E \quad \text{ and } \quad V_N=4\cdot V_L+E-5.
\end{equation}
\end{itemize}
\end{definition}


\begin{warning}
\label{wrn:QuinticSimilarity}
To reduce the number of invariants,
we abstain from defining additional counters
\(s_2^\prime:=\#\lbrace 1\le i\le t\mid q_i\equiv -1\,(\mathrm{mod}\,25)\rbrace\) and
\(s_4^\prime:=\#\lbrace 1\le i\le t\mid q_i\equiv +1\,(\mathrm{mod}\,25)\rbrace\)
for \textit{free} splitting prime divisors of the conductor \(f\).
However, we point out that occasionally a similarity class in the sense of Definition
\ref{dfn:QuinticSimilarity}
will be split in two separate classes, having the same invariants,
but distinct contributions to the counters \(u\) and \(s_4\), resp. \(s_2\).
For instance, the similarity classes \(\mathbf{\lbrack 77\rbrack}\) and \(\mathbf{\lbrack 202\rbrack}\)
with \(77=7\cdot 11\) and \(202=2\cdot 101\) share identical multiplets of invariants
\((e_0;t,u,v,m;n,s_2,s_4)=(2;2,1,1,4;1,0,1)\) (species \(1\mathrm{b}\)), \((U,\eta,\zeta;A,I,R)=(2,-,-;2,1,0)\) (type \(\beta_2\)), and \((V_L,V_M,V_N;E)=(1,1,3;4)\).
But \(u=1\) and \(n=1\) are due to \(7\), \(v=1\) and \(s_4=1\) are due to \(11\), in the former case,
whereas \(v=1\) and \(n=1\) are due to \(2\), \(u=1\) and \(s_4=1\) are due to \(101\), in the latter case.
Therefore, the contributions by primes congruent to \(\pm 1\,(\mathrm{mod}\,25)\)
will be indicated by writing \(u=1'\) and \(s_4=1'\), resp. \(s_2=1'\).
   
We also emphasize that in the rare cases of non-elementary \(5\)-class groups,
the actual structures (abelian type invariants) of the \(5\)-class groups will be taken into account,
and not only the \(5\)-valuations \(V_L,V_M,V_N\).
\end{warning}


\begin{definition}
\label{dfn:Prototype}
The minimal element \(\mathbf{M}\) of a similarity class
(with respect to the natural order of positive integers \(\mathbb{N}\))
is called the representative \textit{prototype} of the class,
which is denoted by writing its prototype in square brackets \(\mathbf{\lbrack M\rbrack}\).

\end{definition}


The remaining elements of a similarity class,
which are bigger than the prototype,
only reproduce the arithmetical invariants of the prototype
and do not provide any additional information,
exept possibly about \textit{other primary components} of the class groups,
that is the structure of \(\ell\)-class groups \(\mathrm{Cl}_{\ell}(F)\)
of the fields \(F\in\lbrace L,M,N\rbrace\) for \(\ell\in\mathbb{P}\setminus\lbrace 5\rbrace\).

Whereas there are only \(13\) DPF types of pure quintic fields,
the \textit{number of similarity classes} is obviously infinite,
since firstly the number \(t\) of primes dividing the conductor is unbounded
and secondly the number of \textit{states}, defined by the triplet \((V_L,V_M,V_N)\)
of \(5\)-valuations of class numbers, is also unlimited.

Given a fixed refined Dedekind species \((e_0;t,u,v,m;n,s_2,s_4)\),
the set of all associated normalized fifth power free radicands \(D\)
usually splits into several similarity classes defined by
distinct DPF types (\textit{type splitting}).
Occasionally it even splits further into different structures of \(5\)-class groups,
called \textit{states},
with increasing complexity of abelian type invariants (\textit{state splitting}).

The \(134\) prototypes \(2\le\mathbf{M}<10^3\) of pure quintic fields
are listed in the Tables
\ref{tbl:Prototypes1},
\ref{tbl:Prototypes2},
\ref{tbl:Prototypes3}
and
\ref{tbl:Prototypes4}.
By \(\mathbf{\lvert M\rvert}:=\#\lbrace D\in\mathbf{\lbrack M\rbrack}\mid D<B\rbrace\) we denote the number of elements
of the similarity class \(\mathbf{\lbrack M\rbrack}\)
defined by the prototype \(\mathbf{M}\),
truncated at the upper bound \(B:=10^3\)
of our systematic investigations.

\newpage

\renewcommand{\arraystretch}{1.0}

\begin{table}[ht]
\caption{\(46\) prototypes \(2\le\mathbf{M}<200\) of pure metacyclic fields}
\label{tbl:Prototypes1}
\begin{center}
\begin{tabular}{|r|rc|ccr|cccc|ccc|r|}
\hline
 No. & \(\mathbf{M}\) &          Factors & S  &              \(f^4\) &  \(m\) & \(V_L\) & \(V_M\) & \(V_N\) & \(E\) & \((1,2,4,5)\)        & T               & P & \(\mathbf{\lvert M\rvert}\) \\
\hline
   1 &   \(2\) &                         & 1b &          \(5^2 2^4\) &  \(1\) &   \(0\) &   \(0\) &   \(0\) & \(5\) & \((\times,-,-,-)\)   & \(\varepsilon\) &                 & \(71\) \\
   2 &   \(5\) &                         & 1a &              \(5^6\) &  \(1\) &   \(0\) &   \(0\) &   \(0\) & \(5\) & \((-,-,-,\otimes)\)  & \(\vartheta\)   &                 &  \(1\) \\
   3 &   \(6\) &            \(2\cdot 3\) & 1b &      \(5^2 2^4 3^4\) &  \(3\) &   \(0\) &   \(0\) &   \(1\) & \(6\) & \((\times,-,-,-)\)   & \(\gamma\)      &                 & \(77\) \\
   4 &   \(7\) &                         & 2  &              \(7^4\) &  \(1\) &   \(0\) &   \(0\) &   \(0\) & \(5\) & \((-,-,-,\otimes)\)  & \(\vartheta\)   &                 & \(18\) \\
   5 &  \(10\) &            \(2\cdot 5\) & 1a &          \(5^6 2^4\) &  \(4\) &   \(0\) &   \(0\) &   \(0\) & \(5\) & \((\times,-,-,-)\)   & \(\varepsilon\) &                 & \(31\) \\
   6 &  \(11\) &                         & 1b &         \(5^2 11^4\) &  \(1\) &   \(1\) &   \(1\) &   \(2\) & \(3\) & \((-,-,\otimes,-)\)  & \(\alpha_2\)    & \(\mathcal{L},\mathfrak{K}_1\) & \(14\) \\
   7 &  \(14\) &            \(2\cdot 7\) & 1b &      \(5^2 2^4 7^4\) &  \(4\) &   \(0\) &   \(0\) &   \(1\) & \(6\) & \((\times,-,-,-)\)   & \(\gamma\)      &                 & \(44\) \\
   8 &  \(18\) &          \(2\cdot 3^2\) & 2  &          \(2^4 3^4\) &  \(1\) &   \(0\) &   \(0\) &   \(0\) & \(5\) & \((\times,-,-,-)\)   & \(\varepsilon\) &                 & \(37\) \\
   9 &  \(19\) &                         & 1b &         \(5^2 19^4\) &  \(1\) &   \(1\) &   \(1\) &   \(2\) & \(3\) & \((-,\otimes,-,-)\)  & \(\delta_2\)    & \(\mathcal{L}\) & \(27\) \\
  10 &  \(22\) &           \(2\cdot 11\) & 1b &     \(5^2 2^4 11^4\) &  \(3\) &   \(1\) &   \(1\) &   \(3\) & \(4\) & \((-,-,(\times),-)\) & \(\beta_2\)     & \(2\cdot 5,\mathcal{K}\) & \(35\) \\
  11 &  \(30\) &     \(2\cdot 3\cdot 5\) & 1a &      \(5^6 2^4 3^4\) & \(16\) &   \(0\) &   \(0\) &   \(1\) & \(6\) & \((\times,-,-,-)\)   & \(\gamma\)      &                 & \(37\) \\
  12 &  \(31\) &                         & 1b &         \(5^2 31^4\) &  \(1\) &   \(2\) &   \(3\) &   \(5\) & \(2\) & \((-,-,\otimes,-)\)  & \(\alpha_1\)    & \(\mathfrak{K}_1,\mathfrak{K}_2\) & \(2\) \\
  13 &  \(33\) &           \(3\cdot 11\) & 1b &     \(5^2 3^4 11^4\) &  \(3\) &   \(2\) &   \(2\) &   \(4\) & \(1\) & \((-,-,\otimes,-)\)  & \(\alpha_2\)    & \(\mathcal{K},\mathfrak{K}_2\) & \(8\) \\
  14 &  \(35\) &            \(5\cdot 7\) & 1a &          \(5^6 7^4\) &  \(4\) &   \(0\) &   \(0\) &   \(1\) & \(6\) & \((-,-,-,\otimes)\)  & \(\eta\)        &                 &  \(6\) \\
  15 &  \(38\) &           \(2\cdot 19\) & 1b &     \(5^2 2^4 19^4\) &  \(3\) &   \(1\) &   \(1\) &   \(3\) & \(4\) & \((-,\otimes,-,-)\)  & \(\beta_2\)     & \(5,\mathcal{K}\) & \(44\) \\
  16 &  \(42\) &     \(2\cdot 3\cdot 7\) & 1b &  \(5^2 2^4 3^4 7^4\) & \(12\) &   \(1\) &   \(2\) &   \(5\) & \(6\) & \((\times,-,-,-)\)   & \(\gamma\)      & \(2\cdot 5,3\cdot 5^2\) & \(22\) \\
  17 &  \(55\) &           \(5\cdot 11\) & 1a &         \(5^6 11^4\) &  \(4\) &   \(1\) &   \(1\) &   \(2\) & \(3\) & \((-,-,\otimes,-)\)  & \(\alpha_2\)    & \(\mathcal{K},\mathfrak{K}_1\) & \(6\) \\
  18 &  \(57\) &           \(3\cdot 19\) & 2  &         \(3^4 19^4\) &  \(1\) &   \(1\) &   \(1\) &   \(2\) & \(3\) & \((-,\otimes,-,-)\)  & \(\delta_2\)    & \(\mathcal{K}\) & \(10\) \\
  19 &  \(66\) &    \(2\cdot 3\cdot 11\) & 1b & \(5^2 2^4 3^4 11^4\) & \(13\) &   \(1\) &   \(2\) &   \(5\) & \(6\) & \((-,-,\times,-)\)   & \(\gamma\)      & \(2\cdot 5,3\cdot 5^3\) & \(17\) \\
  20 &  \(70\) &     \(2\cdot 5\cdot 7\) & 1a &      \(5^6 2^4 7^4\) & \(16\) &   \(0\) &   \(0\) &   \(1\) & \(6\) & \((\times,-,-,-)\)   & \(\gamma\)      &                 & \(14\) \\
  21 &  \(77\) &           \(7\cdot 11\) & 1b &     \(5^2 7^4 11^4\) &  \(4\) &   \(1\) &   \(1\) &   \(3\) & \(4\) & \((-,-,(\times),-)\) & \(\beta_2\)     & \(11\cdot 5^3,\mathcal{K}\) & \(7\) \\
  22 &  \(78\) &    \(2\cdot 3\cdot 13\) & 1b & \(5^2 2^4 3^4 13^4\) & \(13\) &   \(1\) &   \(2\) &   \(5\) & \(6\) & \((\times,-,-,-)\)   & \(\gamma\)      & \(3,2\cdot 5^3\) & \(37\) \\
  23 &  \(82\) &           \(2\cdot 41\) & 2  &         \(2^4 41^4\) &  \(1\) &   \(1\) &   \(1\) &   \(2\) & \(3\) & \((-,-,\otimes,-)\)  & \(\alpha_2\)    & \(\mathcal{K},\mathfrak{K}_2\) & \(15\) \\
  24 &  \(95\) &           \(5\cdot 19\) & 1a &         \(5^6 19^4\) &  \(4\) &   \(1\) &   \(1\) &   \(2\) & \(3\) & \((-,\otimes,-,-)\)  & \(\delta_2\)    & \(\mathcal{K}\) & \(9\) \\
  25 &  \(101\) &                        & 2  &            \(101^4\) &  \(1\) &   \(1\) &   \(2\) &   \(4\) & \(5\) & \((-,-,\otimes,\otimes)\) & \(\zeta_1\) & \(\mathfrak{K}_2\) & \(1\) \\
  26 &  \(110\) &   \(2\cdot 5\cdot 11\) & 1a &     \(5^6 2^4 11^4\) & \(16\) &   \(1\) &   \(1\) &   \(3\) & \(4\) & \((-,-,(\times),-)\) & \(\beta_2\)     & \(11,\mathcal{L}\) & \(11\) \\
  27 &  \(114\) &   \(2\cdot 3\cdot 19\) & 1b & \(5^2 2^4 3^4 19^4\) & \(13\) &   \(1\) &   \(2\) &   \(5\) & \(6\) & \((-,\times,-,-)\)   & \(\gamma\)      & \(2\cdot 5^3,3\cdot 5^3\) & \(20\) \\
  28 &  \(123\) &          \(3\cdot 41\) & 1b &     \(5^2 3^4 41^4\) &  \(3\) &   \(2\) &   \(3\) &   \(6\) & \(3\) & \((-,-,\otimes,-)\)  & \(\alpha_2\)    & \(\mathcal{K},\mathfrak{K}_2\) & \(2\) \\
  29 &  \(126\) &  \(2\cdot 3^2\cdot 7\) & 2  &      \(2^4 3^4 7^4\) &  \(4\) &   \(0\) &   \(0\) &   \(1\) & \(6\) & \((\times,-,-,-)\)   & \(\gamma\)      &                 &  \(6\) \\
  30 &  \(131\) &                        & 1b &        \(5^2 131^4\) &  \(1\) &   \(2\) &   \(2\) &   \(4\) & \(1\) & \((-,-,\otimes,-)\)  & \(\alpha_2\)    & \(\mathcal{L},\mathfrak{K}_2\) & \(6\) \\
  31 &  \(132\) & \(2^2\cdot 3\cdot 11\) & 2  &     \(2^4 3^4 11^4\) &  \(3\) &   \(1\) &   \(1\) &   \(3\) & \(4\) & \((-,-,(\times),-)\) & \(\beta_2\)     & \(11,\mathcal{L}\) & \(5\) \\
  32 &  \(133\) &          \(7\cdot 19\) & 1b &     \(5^2 7^4 19^4\) &  \(4\) &   \(1\) &   \(1\) &   \(3\) & \(4\) & \((-,\otimes,-,-)\)  & \(\beta_2\)     & \(7,\mathcal{L}\) & \(7\) \\
  33 &  \(139\) &                        & 1b &        \(5^2 139^4\) &  \(1\) &   \(1\) &   \(1\) &   \(3\) & \(4\) & \((-,\otimes,-,-)\)  & \(\beta_2\)     & \(5,\mathcal{L}\) & \(4\) \\
  34 &  \(140\) &  \(2^2\cdot 5\cdot 7\) & 1a &      \(5^6 2^4 7^4\) & \(16\) &   \(1\) &   \(2\) &   \(4\) & \(5\) & \((\times,-,-,-)\)   & \(\varepsilon\) & \(7\)           & \(5\) \\
  35 &  \(141\) &          \(3\cdot 47\) & 1b &     \(5^2 3^4 47^4\) &  \(3\) &   \(1\) &   \(2\) &   \(4\) & \(5\) & \((\times,-,-,-)\)   & \(\varepsilon\) & \(47\cdot 5\)   & \(19\) \\
  36 &  \(149\) &                        & 2  &            \(149^4\) &  \(1\) &   \(1\) &   \(1\) &   \(2\) & \(3\) & \((-,\otimes,-,\times)\) & \(\delta_2\) & \(\mathcal{L}\) & \(6\) \\
  37 &  \(151\) &                        & 2  &            \(151^4\) &  \(1\) &   \(1\) &   \(1\) &   \(2\) & \(3\) & \((-,-,\otimes,\times)\) & \(\alpha_2\) & \(\mathfrak{K}_1\) & \(3\) \\
  38 &  \(154\) &   \(2\cdot 7\cdot 11\) & 1b & \(5^2 2^4 7^4 11^4\) & \(12\) &   \(2\) &   \(3\) &   \(7\) & \(4\) & \((-,-,(\times),-)\) & \(\beta_2\)     & \(2\cdot 11^2,\mathcal{K}\) & \(3\) \\
  39 &  \(155\) &          \(5\cdot 31\) & 1a &         \(5^6 31^4\) &  \(4\) &   \(2\) &   \(3\) &   \(5\) & \(2\) & \((-,-,\otimes,-)\)  & \(\alpha_1\)    & \(\mathfrak{K}_1,\mathfrak{K}_2\) & \(2\) \\
  40 &  \(171\) &        \(3^2\cdot 19\) & 1b &     \(5^2 3^4 19^4\) &  \(3\) &   \(1\) &   \(2\) &   \(4\) & \(5\) & \((-,\times,-,-)\)   & \(\varepsilon\) & \(19\cdot 5^2\)   & \(6\) \\
  41 &  \(174\) &   \(2\cdot 3\cdot 29\) & 2  &     \(2^4 3^4 29^4\) &  \(3\) &   \(1\) &   \(1\) &   \(3\) & \(4\) & \((-,\otimes,-,-)\)  & \(\beta_2\)     & \(3^2\cdot 29,\mathcal{K}\) & \(3\) \\
  42 &  \(180\) &\(2^2\cdot 3^2\cdot 5\) & 1a &      \(5^6 2^4 3^4\) & \(16\) &   \(1\) &   \(2\) &   \(4\) & \(5\) & \((\times,-,-,-)\)   & \(\varepsilon\) & \(3\)           & \(5\) \\
  43 &  \(182\) &   \(2\cdot 7\cdot 13\) & 2  &     \(2^4 7^4 13^4\) &  \(4\) &   \(1\) &   \(2\) &   \(4\) & \(5\) & \((\times,-,-,-)\)   & \(\varepsilon\) & \(7\)           & \(1\) \\
  44 &  \(186\) &   \(2\cdot 3\cdot 31\) & 1b & \(5^2 2^4 3^4 31^4\) & \(13\) &   \(2\) &   \(3\) &   \(6\) & \(3\) & \((-,-,\otimes,-)\)  & \(\beta_1\)     & \(5,\mathfrak{K}_2\) & \(7\) \\
  45 &  \(190\) &   \(2\cdot 5\cdot 19\) & 1a &     \(5^6 2^4 19^4\) & \(16\) &   \(1\) &   \(1\) &   \(3\) & \(4\) & \((-,\otimes,-,-)\)  & \(\beta_2\)     & \(5,\mathcal{K}\) & \(9\) \\
  46 &  \(191\) &                        & 1b &        \(5^2 191^4\) &  \(1\) &   \(1\) &   \(2\) &   \(4\) & \(5\) & \((-,-,\otimes,-)\)  & \(\beta_1\)     & \(5,\mathfrak{K}_1\) & \(3\) \\
\hline
\end{tabular}
\end{center}
\end{table}

\newpage

\renewcommand{\arraystretch}{1.0}

\begin{table}[ht]
\caption{\(44\) prototypes  \(200<\mathbf{M}<510\) of pure metacyclic fields}
\label{tbl:Prototypes2}
\begin{center}
\begin{tabular}{|r|rc|ccr|cccc|ccc|r|}
\hline
 No. & \(\mathbf{M}\) &               Factors & S  &                  \(f^4\) &  \(m\) & \(V_L\) & \(V_M\) & \(V_N\) & \(E\) & \((1,2,4,5)\)        & T               & P & \(\mathbf{\lvert M\rvert}\) \\
\hline
  47 &  \(202\) &              \(2\cdot 101\) & 1b &        \(5^2 2^4 101^4\) &  \(4\) &   \(1\) &   \(1\) &   \(3\) & \(4\) & \((-,\otimes,-,-)\)  & \(\beta_2\)     & \(2\cdot 5,\mathcal{K}\) & \(6\) \\
  48 &  \(203\) &               \(7\cdot 29\) & 1b &         \(5^2 7^4 29^4\) &  \(4\) &   \(1\) &   \(2\) &   \(4\) & \(5\) & \((-,\times,-,-)\)   & \(\varepsilon\) & \(29\cdot 5\) & \(2\) \\
  49 &  \(209\) &              \(11\cdot 19\) & 1b &        \(5^2 11^4 19^4\) &  \(3\) &   \(2\) &   \(3\) &   \(7\) & \(4\) & \((-,\otimes,\times,-)\) & \(\beta_2\) & \(11\cdot 5^2,\mathcal{K}_{(19)}\) & \(2\) \\
  50 &  \(210\) &  \(2\cdot 3\cdot 5\cdot 7\) & 1a &      \(5^6 2^4 3^4 7^4\) & \(64\) &   \(1\) &   \(2\) &   \(5\) & \(6\) & \((\times,-,-,-)\)   & \(\gamma\)      & \(7,3^2\cdot 5\) & \(5\) \\
  51 &  \(211\) &                             & 1b &            \(5^2 211^4\) &  \(1\) &   \(3\) &   \(5\) &   \(9\) & \(2\) & \((-,-,\otimes,-)\)  & \(\delta_1\)    & \(\mathfrak{K}_2\) & \(1\) \\
  52 &  \(218\) &              \(2\cdot 109\) & 2  &            \(2^4 109^4\) &  \(1\) &   \(1\) &   \(2\) &   \(4\) & \(5\) & \((-,\times,-,-)\)   & \(\varepsilon\) & \(2\) & \(1\) \\
  53 &  \(231\) &        \(3\cdot 7\cdot 11\) & 1b &     \(5^2 3^4 7^4 11^4\) & \(12\) &   \(1\) &   \(2\) &   \(5\) & \(6\) & \((-,-,\times,-)\)   & \(\gamma\)      & \(11,7\cdot 5^2\) & \(5\) \\
  54 &  \(247\) &              \(13\cdot 19\) & 1b &        \(5^2 13^4 19^4\) &  \(3\) &   \(1\) &   \(2\) &   \(5\) & \(6\) & \((-,-,\times,-)\)   & \(\gamma\)      &  & \(2\) \\
  55 &  \(253\) &              \(11\cdot 23\) & 1b &        \(5^2 11^4 23^4\) &  \(3\) &   \(1\) &   \(2\) &   \(4\) & \(5\) & \((-,-,\otimes,-)\)  & \(\beta_1\)     & \(11,\mathfrak{K}_1\) & \(4\) \\
  56 &  \(259\) &               \(7\cdot 37\) & 1b &         \(5^2 7^4 37^4\) &  \(4\) &   \(1\) &   \(2\) &  \(5*\) & \(6\) & \((\times,-,-,-)\)   & \(\gamma\)      &  & \(1\) \\
  57 &  \(266\) &        \(2\cdot 7\cdot 19\) & 1b &     \(5^2 2^4 7^4 19^4\) & \(12\) &   \(1\) &   \(2\) &   \(5\) & \(6\) & \((-,\times,-,-)\)   & \(\gamma\)      & \(5,2^3\cdot 7\) & \(3\) \\
  58 &  \(273\) &        \(3\cdot 7\cdot 13\) & 1b &     \(5^2 3^4 7^4 13^4\) & \(12\) &   \(2\) &   \(4\) &   \(8\) & \(5\) & \((\times,-,-,-)\)   & \(\varepsilon\) & \(7\cdot 13^2\cdot 5^4\) & \(4\) \\
  59 &  \(275\) &             \(5^2\cdot 11\) & 1a &             \(5^6 11^4\) &  \(4\) &   \(2\) &   \(2\) &   \(4\) & \(1\) & \((-,-,\otimes,-)\)  & \(\alpha_2\)    & \(\mathcal{K},\mathfrak{K}_2\) & \(2\) \\
  60 &  \(276\) &      \(2^2\cdot 3\cdot 23\) & 2  &         \(2^4 3^4 23^4\) &  \(3\) &   \(0\) &   \(0\) &   \(1\) & \(6\) & \((\times,-,-,-)\)   & \(\gamma\)      &      & \(10\)  \\
  61 &  \(281\) &                             & 1b &            \(5^2 281^4\) &  \(1\) &   \(3\) &  \(5*\) &  \(9*\) & \(2\) & \((-,-,\otimes,-)\)  & \(\alpha_1\)    & \(\mathfrak{K}_1,\mathfrak{K}_2\) & \(1\) \\
  62 &  \(285\) &        \(3\cdot 5\cdot 19\) & 1a &         \(5^6 3^4 19^4\) & \(16\) &   \(1\) &   \(2\) &   \(5\) & \(6\) & \((-,\times,-,-)\)   & \(\gamma\)      &  & \(1\) \\
  63 &  \(286\) &       \(2\cdot 11\cdot 13\) & 1b &    \(5^2 2^4 11^4 13^4\) & \(13\) &   \(2\) &   \(3\) &   \(7\) & \(4\) & \((-,-,(\times),-)\) & \(\beta_2\)     & \(11^2\cdot 5,\mathcal{K}\) & \(3\) \\
  64 &  \(287\) &               \(7\cdot 41\) & 1b &         \(5^2 7^4 41^4\) &  \(4\) &   \(2\) &   \(2\) &   \(4\) & \(1\) & \((-,-,\otimes,-)\)  & \(\alpha_2\)    & \(\mathcal{K},\mathfrak{K}_1\) & \(1\) \\
  65 &  \(290\) &        \(2\cdot 5\cdot 29\) & 1a &         \(5^6 2^4 29^4\) & \(16\) &   \(1\) &   \(2\) &   \(4\) & \(5\) & \((-,\times,-,-)\)   & \(\varepsilon\) & \(29\) & \(2\) \\
  66 &  \(298\) &              \(2\cdot 149\) & 1b &        \(5^2 2^4 149^4\) &  \(4\) &   \(1\) &   \(2\) &   \(4\) & \(5\) & \((-,\times,-,-)\)   & \(\varepsilon\) & \(5\) & \(2\) \\
  67 &  \(301\) &               \(7\cdot 43\) & 2  &             \(7^4 43^4\) &  \(4\) &   \(0\) &   \(0\) &   \(1\) & \(6\) & \((-,-,-,\otimes)\)  & \(\eta\)        &                 & \(1\) \\
  68 &  \(302\) &              \(2\cdot 151\) & 1b &        \(5^2 2^4 151^4\) &  \(4\) &   \(1\) &   \(2\) &   \(4\) & \(5\) & \((-,-,\otimes,-)\)  & \(\beta_1\)     & \(2,\mathfrak{K}_1\) & \(2\) \\
  69 &  \(319\) &              \(11\cdot 29\) & 1b &        \(5^2 11^4 29^4\) &  \(3\) &   \(2\) &   \(2\) &   \(5\) & \(2\) & \((-,\otimes,(\times),-)\) & \(\alpha_3\) & \(\mathcal{K}_{(11)},\mathcal{K}_{(29)}\) & \(3\) \\
  70 &  \(329\) &               \(7\cdot 47\) & 1b &         \(5^2 7^4 47^4\) &  \(4\) &   \(1\) &   \(2\) &   \(4\) & \(5\) & \((\times,-,-,-)\)   & \(\varepsilon\) & \(47\) & \(7\) \\
  71 &  \(330\) & \(2\cdot 3\cdot 5\cdot 11\) & 1a &     \(5^6 2^4 3^4 11^4\) & \(64\) &   \(2\) &   \(4\) &   \(9\) & \(6\) & \((-,-,\times,-)\)   & \(\gamma\)      & \(3\cdot 11,5\cdot 11^3\) & \(2\) \\
  72 &  \(341\) &              \(11\cdot 31\) & 1b &        \(5^2 11^4 31^4\) &  \(3\) &   \(3\) &   \(5\) &   \(9\) & \(2\) & \((-,-,\otimes,-)\)  & \(\alpha_1\)    & \(\mathfrak{K}_{(11),1}\mathfrak{K}_{(31),2}^3,\) & \\
     &          &                             &    &                          &        &         &         &         &       &                      &                 & \(\mathfrak{K}_{(11),2}\mathfrak{K}_{(31),1}\) & \(2\) \\
  73 &  \(348\) &      \(2^2\cdot 3\cdot 29\) & 1b &     \(5^2 2^4 3^4 29^4\) & \(13\) &   \(2\) &   \(4\) &   \(8\) & \(5\) & \((-,\times,-,-)\)   & \(\varepsilon\) & \(2\cdot 3\cdot 5\) & \(2\) \\
  74 &  \(377\) &              \(13\cdot 29\) & 1b &        \(5^2 13^4 29^4\) &  \(3\) &   \(2\) &   \(3\) &   \(6\) & \(3\) & \((-,\otimes,-,-)\)  & \(\delta_2\)    & \(\mathcal{K}\) & \(1\) \\
  75 &  \(379\) &                             & 1b &            \(5^2 379^4\) &  \(1\) &   \(1\) &   \(2\) &   \(4\) & \(5\) & \((-,\times,-,-)\)   & \(\varepsilon\) & \(5\) & \(1\) \\
  76 &  \(385\) &        \(5\cdot 7\cdot 11\) & 1a &         \(5^6 7^4 11^4\) & \(16\) &   \(1\) &   \(1\) &   \(3\) & \(4\) & \((-,\otimes,-,-)\)  & \(\beta_2\)     & \(7\cdot 11^2,\mathcal{K}\) & \(1\) \\
  77 &  \(390\) & \(2\cdot 3\cdot 5\cdot 13\) & 1a &     \(5^6 2^4 3^4 13^4\) & \(64\) &   \(1\) &   \(2\) &   \(5\) & \(6\) & \((\times,-,-,-)\)   & \(\gamma\)      & \(2,5\) & \(4\) \\
  78 &  \(398\) &              \(2\cdot 199\) & 1b &        \(5^2 2^4 199^4\) &  \(4\) &   \(1\) &   \(1\) &   \(3\) & \(4\) & \((-,\otimes,-,-)\)  & \(\beta_2\)     & \(5,\mathcal{K}\) & \(7\) \\
  79 &  \(399\) &        \(3\cdot 7\cdot 19\) & 2  &         \(3^4 7^4 19^4\) &  \(4\) &   \(1\) &   \(1\) &   \(3\) & \(4\) & \((-,\otimes,-,-)\)  & \(\beta_2\)     & \(3\cdot 19^2,\mathcal{K}\) & \(2\) \\
  80 &  \(401\) &                             & 2  &                \(401^4\) &  \(1\) &   \(2\) &   \(3\) &   \(5\) & \(2\) & \((-,-,\otimes,-)\)  & \(\alpha_1\)    & \(\mathfrak{K}_1,\mathfrak{K}_2\) & \(2\) \\
  81 &  \(418\) &       \(2\cdot 11\cdot 19\) & 2  &        \(2^4 11^4 19^4\) &  \(3\) &   \(2\) &   \(3\) &   \(6\) & \(3\) & \((-,\otimes,\otimes,-)\) & \(\alpha_2\) & \(\mathcal{K}_{(19)},\mathfrak{K}_{(11),1}\) & \(1\) \\
  82 &  \(421\) &                             & 1b &            \(5^2 421^4\) &  \(1\) &   \(1\) &   \(2\) &   \(3\) & \(4\) & \((-,-,\otimes,-)\)  & \(\delta_1\)    & \(\mathfrak{K}_2\) & \(5\) \\
  83 &  \(422\) &              \(2\cdot 211\) & 1b &        \(5^2 2^4 211^4\) &  \(3\) &   \(1\) &   \(2\) &   \(4\) & \(5\) & \((-,-,\times,-)\)   & \(\varepsilon\) & \(2\cdot 5^2\) & \(6\) \\
  84 &  \(451\) &              \(11\cdot 41\) & 2  &            \(11^4 41^4\) &  \(1\) &   \(2\) &   \(3\) &   \(6\) & \(3\) & \((-,-,\otimes,-)\)  & \(\alpha_2\)    & \(\mathcal{K}_{(11)}\mathcal{K}_{(41)}^4,\) & \\
     &          &                             &    &                          &        &         &         &         &       &                      &                 & \(\mathfrak{K}_{(11),1}\mathfrak{K}_{(41),2}^3\) & \(1\) \\
  85 &  \(462\) & \(2\cdot 3\cdot 7\cdot 11\) & 1b & \(5^2 2^4 3^4 7^4 11^4\) & \(52\) &   \(2\) &   \(4\) &   \(9\) & \(6\) & \((-,-,\times,-)\)   & \(\gamma\)      & \(5^2\cdot 7,3\cdot 5\cdot 11^2\) & \(1\) \\
  86 &  \(465\) &        \(3\cdot 5\cdot 31\) & 1a &         \(5^6 3^4 31^4\) & \(16\) &   \(2\) &   \(3\) &  \(7*\) & \(4\) & \((-,-,(\times),-)\) & \(\beta_2\)     & \(5,\mathcal{K}\) & \(1\) \\
  87 &  \(473\) &              \(11\cdot 43\) & 1b &        \(5^2 11^4 43^4\) &  \(4\) &   \(2\) &   \(3\) &  \(7*\) & \(4\) & \((-,-,(\times),-)\) & \(\beta_2\)     & \(11,\mathcal{L}\) & \(1\) \\
  88 &  \(482\) &              \(2\cdot 241\) & 2  &            \(2^4 241^4\) &  \(1\) &   \(1\) &   \(2\) &   \(4\) & \(5\) & \((-,-,\otimes,-)\)  & \(\beta_1\)     & \(241,\mathfrak{K}_2\) & \(2\) \\
  89 &  \(502\) &              \(2\cdot 251\) & 1b &        \(5^2 2^4 251^4\) &  \(4\) &  *\(2\) &  *\(4\) &  *\(8\) & \(5\) & \((-,-,\otimes,-)\)  & \(\beta_1\)     & \(251\cdot 5,\mathfrak{K}_2\) & \(1\) \\
  90 &  \(505\) &              \(5\cdot 101\) & 1a &            \(5^6 101^4\) &  \(4\) &   \(1\) &   \(1\) &   \(3\) & \(4\) & \((-,-,(\times),\otimes)\) & \(\zeta_2\) & \(\mathcal{K}\) & \(2\) \\
\hline
\end{tabular}
\end{center}
\end{table}

\newpage

\renewcommand{\arraystretch}{1.0}

\begin{table}[ht]
\caption{\(39\) prototypes  \(510<\mathbf{M}<900\) of pure metacyclic fields}
\label{tbl:Prototypes3}
\begin{center}
\begin{tabular}{|r|rc|ccr|cccc|ccc|r|}
\hline
 No. & \(\mathbf{M}\) &               Factors & S  &                  \(f^4\) &  \(m\) & \(V_L\) & \(V_M\) & \(V_N\) & \(E\) & \((1,2,4,5)\)        & T               & P & \(\mathbf{\lvert M\rvert}\) \\
\hline
  91 &  \(517\) &              \(11\cdot 47\) & 1b &        \(5^2 11^4 47^4\) &  \(3\) &   \(2\) &   \(3\) &   \(6\) & \(3\) & \((-,-,\otimes,-)\)  & \(\beta_1\)     & \(11\cdot 5^2,\mathfrak{K}_2\) & \(1\) \\
  92 &  \(532\) &      \(2^2\cdot 7\cdot 19\) & 2  &         \(2^4 7^4 19^4\) &  \(4\) &   \(1\) &   \(2\) &   \(4\) & \(5\) & \((-,\times,-,-)\)   & \(\varepsilon\) & \(2\cdot 7\) & \(1\) \\
  93 &  \(546\) & \(2\cdot 3\cdot 7\cdot 13\) & 1b & \(5^2 2^4 3^4 7^4 13^4\) & \(52\) &   \(2\) &   \(4\) &   \(9\) & \(6\) & \((\times,-,-,-)\)   & \(\gamma\)      & \(3\cdot 5\cdot 13,\)         & \\
     &          &                             &    &                          &        &         &         &         &       &                      &                 & \(5^2\cdot 7\cdot 13\) & \(3\) \\
  94 &  \(550\) &      \(2\cdot 5^2\cdot 11\) & 1a &         \(5^6 2^4 11^4\) & \(16\) &   \(2\) &   \(3\) &   \(6\) & \(3\) & \((-,-,\otimes,-)\)  & \(\alpha_2\)    & \(\mathcal{K}, \mathfrak{K}_1\) & \(1\) \\
  95 &  \(551\) &              \(19\cdot 29\) & 2  &            \(19^4 29^4\) &  \(1\) &   \(2\) &   \(2\) &   \(5\) & \(2\) & \((-,\otimes,-,-)\)  & \(\alpha_3\)    & \(\mathcal{K}_{(19)},\mathcal{K}_{(29)}\) & \(1\) \\
  96 &  \(570\) & \(2\cdot 3\cdot 5\cdot 19\) & 1a &     \(5^6 2^4 3^4 19^4\) & \(64\) &   \(1\) &   \(2\) &   \(5\) & \(6\) & \((-,\times,-,-)\)   & \(\gamma\)      & \(2^4\cdot 3,2^4\cdot 5\) & \(2\) \\
  97 &  \(574\) &        \(2\cdot 7\cdot 41\) & 2  &         \(2^4 7^4 41^4\) &  \(4\) &   \(1\) &   \(1\) &   \(3\) & \(4\) & \((-,-,(\times),-)\) & \(\beta_2\)     & \(41,\mathcal{L}\) & \(3\) \\
  98 &  \(590\) &        \(2\cdot 5\cdot 59\) & 1a &         \(5^6 2^4 59^4\) & \(16\) &   \(2\) &   \(3\) &  *\(7\) & \(4\) & \((-,\otimes,-,-)\)  & \(\beta_2\)     & \(5,\mathcal{K}\) & \(1\) \\
  99 &  \(602\) &        \(2\cdot 7\cdot 43\) & 1b &     \(5^2 2^4 7^4 43^4\) & \(16\) &   \(1\) &   \(2\) &   \(5\) & \(6\) & \((\times,-,-,-)\)   & \(\gamma\)      & \(2,7\) & \(2\) \\
 100 &  \(606\) &       \(2\cdot 3\cdot 101\) & 1b &    \(5^2 2^4 3^4 101^4\) & \(12\) &   \(1\) &   \(2\) &   \(5\) & \(6\) & \((-,-,\times,-)\)   & \(\gamma\)      & \(2\cdot 5,101\) & \(2\) \\
 101 &  \(609\) &        \(3\cdot 7\cdot 29\) & 1b &     \(5^2 3^4 7^4 29^4\) & \(12\) &   \(2\) &   \(3\) &   \(7\) & \(4\) & \((-,\otimes,-,-)\)  & \(\beta_2\)     & \(7^2\cdot 5,\mathcal{K}\) & \(1\) \\
 102 &  \(620\) &      \(2^2\cdot 5\cdot 31\) & 1a &         \(5^6 2^4 31^4\) & \(16\) &   \(2\) &  \(4*\) &  \(8*\) & \(5\) & \((-,-,\otimes,-)\)  & \(\beta_1\)     & \(5,\mathfrak{K}_1\) & \(1\) \\
 103 &  \(627\) &       \(3\cdot 11\cdot 19\) & 2  &    \(5^2 3^4 11^4 59^4\) & \(13\) &   \(3\) &   \(4\) &   \(9\) & \(2\) & \((-,\otimes,(\times),-)\) & \(\alpha_3\) & \(\mathcal{K}_{(11)},\mathcal{K}_{(19)}\) & \(1\) \\
 104 &  \(638\) &       \(2\cdot 11\cdot 29\) & 1b &    \(5^2 2^4 11^4 29^4\) & \(13\) &   \(2\) &   \(3\) &   \(7\) & \(4\) & \((-,\otimes,(\times),-)\) & \(\beta_2\) & \(2^4\cdot 11\cdot 5,\) & \\
     &          &                             &    &                          &        &         &         &         &       &                      &                 & \(\mathcal{K}_{(11)}\cdot\mathcal{K}_{(29)}^4\) & \(2\) \\
 105 &  \(649\) &              \(11\cdot 59\) & 2  &            \(11^4 59^4\) &  \(1\) &   \(2\) &   \(2\) &   \(5\) & \(2\) & \((-,\otimes,(\times),-)\) & \(\alpha_3\) & \(\mathcal{K}_{(11)},\mathcal{K}_{(59)}\) & \(2\) \\
 106 &  \(660\) &\(2^2\cdot 3\cdot 5\cdot 11\)& 1a &     \(5^6 2^4 3^4 11^4\) & \(64\) &   \(1\) &   \(2\) &   \(5\) & \(6\) & \((-,-,\times,-)\)   & \(\gamma\)      & \(2\cdot 5,5\cdot 11\) & \(2\) \\
 107 &  \(665\) &        \(5\cdot 7\cdot 19\) & 1a &         \(5^6 7^4 19^4\) & \(16\) &   \(1\) &   \(1\) &   \(3\) & \(4\) & \((-,\otimes,-,-)\)  & \(\beta_2\)     & \(7,\mathcal{K}\) & \(1\) \\
 108 &  \(671\) &              \(11\cdot 61\) & 1b &        \(5^2 11^4 61^4\) &  \(3\) &   \(3\) &   \(4\) &   \(8\) & \(1\) & \((-,-,\otimes,-)\)  & \(\alpha_2\)    & \(\mathcal{K}_{(11)}\mathcal{K}_{(61)},\) & \\
     &          &                             &    &                          &        &         &         &         &       &                      &                 & \(\mathfrak{K}_{(11),2}\mathfrak{K}_{(61),1}^2\) & \(1\) \\
 109 &  \(682\) &       \(2\cdot 11\cdot 31\) & 2  &        \(2^4 11^4 31^4\) &  \(3\) &   \(2\) &   \(3\) &   \(6\) & \(3\) & \((-,-,\otimes,-)\)  & \(\alpha_2\)    & \(\mathcal{K}_{(11)}\mathcal{K}_{(31)}^3,\) & \\
     &          &                             &    &                          &        &         &         &         &       &                      &                 & \(\mathfrak{K}_{(11),1}\mathfrak{K}_{(31),2}^3\) & \(1\) \\
 110 &  \(691\) &                             & 1b &            \(5^2 691^4\) &  \(1\) &   \(3\) &   \(4\) &   \(8\) & \(1\) & \((-,-,\otimes,-)\)  & \(\alpha_2\)    & \(\mathcal{L},\mathfrak{K}_2\) & \(1\) \\
 111 &  \(693\) &      \(3^2\cdot 7\cdot 11\) & 2  &         \(3^4 7^4 11^4\) &  \(4\) &   \(2\) &   \(3\) &   \(6\) & \(3\) & \((-,-,\otimes,-)\)  & \(\beta_1\)     & \(3\cdot 7,\mathfrak{K}_2\) & \(1\) \\
 112 &  \(695\) &              \(5\cdot 139\) & 1a &            \(5^6 139^4\) &  \(4\) &   \(1\) &   \(2\) &   \(4\) & \(5\) & \((-,\times,-,-)\)   & \(\varepsilon\) & \(139\) & \(1\) \\
 113 &  \(702\) &      \(2\cdot 3^3\cdot 13\) & 1b &     \(5^2 2^4 3^4 13^4\) & \(13\) &   \(2\) &   \(4\) &   \(8\) & \(5\) & \((\times,-,-,-)\)   & \(\varepsilon\) & \(13\cdot 5^4\) & \(2\) \\
 114 &  \(707\) &              \(7\cdot 101\) & 2  &            \(7^4 101^4\) &  \(4\) &   \(1\) &   \(1\) &   \(3\) & \(4\) & \((-,\otimes,-,\otimes)\) & \(\zeta_2\) & \(\mathcal{K}\) & \(1\) \\
 115 &  \(710\) &        \(2\cdot 5\cdot 71\) & 1a &         \(5^6 2^4 71^4\) & \(16\) &   \(2\) &   \(2\) &   \(4\) & \(1\) & \((-,-,\otimes,-)\)  & \(\alpha_2\)    & \(\mathcal{K}, \mathfrak{K}_2\) & \(4\) \\
 116 &  \(745\) &              \(5\cdot 149\) & 1a &            \(5^6 149^4\) &  \(4\) &   \(1\) &   \(1\) &   \(3\) & \(4\) & \((-,\otimes,-,\otimes)\) & \(\zeta_2\) & \(\mathcal{K}\) & \(2\) \\
 117 &  \(749\) &              \(7\cdot 107\) & 2  &            \(7^4 107^4\) &  \(4\) &   \(1\) &   \(2\) &   \(4\) & \(5\) & \((-,-,-,\times)\)   & \(\varepsilon\) &                 & \(1\) \\
 118 &  \(751\) &                             & 2  &                \(751^4\) &  \(1\) &   \(2\) &   \(2\) &   \(4\) & \(1\) & \((-,-,\otimes,\times)\) & \(\alpha_2\) & \(\mathcal{L},\mathfrak{K}_1\) & \(1\) \\
 119 &  \(770\) & \(2\cdot 5\cdot 7\cdot 11\) & 1a &     \(5^6 2^4 7^4 11^4\) & \(64\) &   \(1\) &   \(2\) &   \(5\) & \(6\) & \((-,-,\times,-)\)   & \(\gamma\)      & \(2\cdot 5,2^2\cdot 11\) & \(1\) \\ 
 120 &  \(779\) &              \(19\cdot 41\) & 1b &        \(5^2 19^4 41^4\) &  \(3\) &   \(3\) &   \(4\) &   \(8\) & \(1\) & \((-,-,\otimes,-)\)  & \(\alpha_2\)    & \(\mathcal{K}_{(19)}\mathcal{K}_{(41)},\) & \\
     &          &                             &    &                          &        &         &         &         &       &                      &                 & \(\mathfrak{K}_{(41),1}\) & \(1\) \\
 121 &  \(785\) &              \(5\cdot 157\) & 1a &            \(5^6 157^4\) &  \(4\) &   \(1\) &   \(2\) &   \(4\) & \(5\) & \((-,-,-,\times)\)   & \(\varepsilon\) &                 & \(1\) \\
 122 &  \(798\) & \(2\cdot 3\cdot 7\cdot 19\) & 1b & \(5^2 2^4 3^4 7^4 19^4\) & \(52\) &   \(2\) &   \(4\) &   \(9\) & \(6\) & \((-,\times,-,-)\)   & \(\gamma\)      & \(3\cdot 19,7\cdot 5^4\) & \(1\) \\
 123 &  \(808\) &            \(2^3\cdot 101\) & 1b &        \(5^2 2^4 101^4\) &  \(4\) &   \(2\) &   \(2\) &   \(4\) & \(1\) & \((-,-,\otimes,-)\)  & \(\alpha_2\)    & \(\mathcal{K}, \mathfrak{K}_1\) & \(2\) \\
 124 &  \(825\) &      \(3\cdot 5^2\cdot 11\) & 1a &         \(5^6 3^4 11^4\) & \(16\) &   \(1\) &   \(2\) &   \(4\) & \(5\) & \((-,-,\otimes,-)\)  & \(\beta_1\)     & \(3^2\cdot 11,\mathfrak{K}_1\) & \(1\) \\
 125 &  \(843\) &              \(3\cdot 281\) & 2  &            \(3^4 281^4\) &  \(1\) &   \(1\) &   \(2\) &   \(3\) & \(4\) & \((-,-,\otimes,-)\)  & \(\delta_1\)    & \(\mathfrak{K}_1\) & \(1\) \\
 126 &  \(858\) &\(2\cdot 3\cdot 11\cdot 13\) & 1b &\(5^2 2^4 3^4 11^4 13^4\) & \(51\) &   \(2\) &   \(4\) &   \(9\) & \(6\) & \((-,-,\times,-)\)   & \(\gamma\)      & \(2\cdot 5,13\cdot 5\) & \(1\) \\
 127 &  \(861\) &        \(3\cdot 7\cdot 41\) & 1b &     \(5^2 3^4 7^4 41^4\) & \(12\) &   \(3\) &   \(4\) &   \(8\) & \(1\) & \((-,-,\otimes,-)\)  & \(\alpha_2\)    & \(\mathcal{K},\mathfrak{K}_1\) & \(1\) \\
 128 &  \(893\) &              \(19\cdot 47\) & 2  &            \(19^4 47^4\) &  \(1\) &   \(1\) &   \(1\) &   \(3\) & \(4\) & \((-,\otimes,-,-)\)  & \(\beta_2\)     & \(19,\mathfrak{L}\) & \(1\) \\
 129 &  \(894\) &       \(2\cdot 3\cdot 149\) & 1b &    \(5^2 2^4 3^4 149^4\) & \(12\) &   \(1\) &   \(2\) &   \(5\) & \(6\) & \((\times,-,-,-)\)   & \(\gamma\)      & \(5,2\cdot 3^2\) & \(1\) \\
\hline
\end{tabular}
\end{center}
\end{table}

\newpage

\renewcommand{\arraystretch}{1.0}

\begin{table}[ht]
\caption{\(5\) prototypes  \(900<\mathbf{M}<1000\) of pure metacyclic fields}
\label{tbl:Prototypes4}
\begin{center}
\begin{tabular}{|r|rc|ccr|cccc|ccc|r|}
\hline
 No. & \(\mathbf{M}\) &               Factors & S  &                  \(f^4\) &  \(m\) & \(V_L\) & \(V_M\) & \(V_N\) & \(E\) & \((1,2,4,5)\)        & T               & P & \(\mathbf{\lvert M\rvert}\) \\
\hline
 130 &  \(902\) &       \(2\cdot 11\cdot 41\) & 1b &    \(5^2 2^4 11^4 41^4\) & \(13\) &   \(2\) &   \(3\) &   \(7\) & \(4\) & \((-,-,(\times),-)\) & \(\beta_2\)     & \(11\cdot 5,\) & \\
     &          &                             &    &                          &        &         &         &         &       &                      &                 & \(\mathcal{K}_{(11)}\cdot\mathcal{K}_{(41)}\) & \(1\) \\
 131 &  \(924\) &\(2^2\cdot 3\cdot 7\cdot 11\)& 1a &     \(2^4 3^4 7^4 11^4\) & \(12\) &   \(1\) &   \(2\) &   \(5\) & \(6\) & \((-,-,\times,-)\)   & \(\gamma\)      & \(3\cdot 7,7\cdot 11\) & \(1\) \\ 
 132 &  \(955\) &              \(5\cdot 191\) & 1a &            \(5^6 191^4\) &  \(4\) &  \(2*\) &  \(3*\) &  \(6*\) & \(3\) & \((-,-,\otimes,-)\)  & \(\alpha_2\)    & \(\mathcal{K},\mathfrak{K}_1\) & \(1\) \\
 133 &  \(957\) &       \(3\cdot 11\cdot 29\) & 2  &        \(3^4 11^4 29^4\) &  \(3\) &   \(2\) &   \(2\) &   \(5\) & \(2\) & \((-,\otimes,(\times),-)\) & \(\alpha_3\) & \(\mathcal{K}_{(11)},\mathcal{K}_{(29)}\) & \(1\) \\
 134 &  \(982\) &              \(2\cdot 491\) & 2  &            \(2^4 491^4\) &  \(1\) &   \(1\) &   \(1\) &   \(2\) & \(3\) & \((-,-,\otimes,-)\)  & \(\alpha_2\)    & \(\mathcal{K},\mathfrak{K}_1\) & \(2\) \\
\hline
\end{tabular}
\end{center}
\end{table}


\subsection{General theorems on DPF types and Polya fields}
\label{ss:Theorems}
\noindent
There is only a single finite similarity class \(\lbrack 5\rbrack=\lbrace 5\rbrace\),
characterized by the exceptional number \(t=0\) of primes \(q\ne 5\) dividing the conductor \(f\) (here \(f^4=5^6\)).
The invariants of this unique metacyclic Polya field \(N\) are given by

\begin{equation}
\label{eqn:Prototype5Theta}
\begin{aligned}
& \mathbf{\lbrack 5\rbrack}, \text{ species \(1\mathrm{a}\), }
(e_0;t,u,v,m;n,s_2,s_4)=(6;0,0,0,1;0,0,0), \\
& \text{type \(\vartheta\), }(U,\eta,\zeta;A,I,R)=(0,\times,\times;1,0,0),
\text{ and } (V_L,V_M,V_N;E)=(0,0,0;5).
\end{aligned}
\end{equation}

We conjecture that all the other similarity classes are infinite.
Precisely four of them can actually be given by
parametrized infinite sequences in a deterministic way
aside from the intrinsic probabilistic nature of the occurrence
of primes in residue classes
and of composite integers with assigned shape of prime decomposition.
This was proved in
\cite[Thm. 10.1]{Ma2a}
and
\cite[Thm. 2.1]{Ma2b}.

\begin{theorem}
\label{thm:InfSimCls}
Each of the following infinite sequences of conductors \(f=f_{N/K}\)
unambiguously determines the DPF type of the pure metacyclic fields \(N\)
in the associated multiplet with \(m=m(f)\) members.
\begin{enumerate}
\item
\(f=q\) with \(q\in\mathbb{P}\), \(q\equiv\pm 7\,(\mathrm{mod}\,25)\)
gives rise to a singulet, \(m=1\), with DPF type \(\vartheta\),
\item
\(f^4=5^2\cdot q^4\) with \(q\in\mathbb{P}\), \(q\equiv\pm 2\,(\mathrm{mod}\,5)\), \(q\not\equiv\pm 7\,(\mathrm{mod}\,25)\)
gives rise to a singulet, \(m=1\), with DPF type \(\varepsilon\),
\item
\(f^4=5^6\cdot q^4\) with \(q\in\mathbb{P}\), \(q\equiv\pm 2\,(\mathrm{mod}\,5)\), \(q\not\equiv\pm 7\,(\mathrm{mod}\,25)\)
gives rise to a quartet, \(m=4\), with homogeneous DPF type \((\varepsilon,\varepsilon,\varepsilon,\varepsilon)\),
\item
\(f=q_1\cdot q_2\) with \(q_i\in\mathbb{P}\), \(q_i\equiv\pm 2\,(\mathrm{mod}\,5)\), \(q_i\not\equiv\pm 7\,(\mathrm{mod}\,25)\)
gives rise to a singulet, \(m=1\), with DPF type \(\varepsilon\).
\end{enumerate}
\end{theorem}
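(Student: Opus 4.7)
The plan is to verify each of the four cases by first computing the multiplicity $m$ from the formulas in Tables \ref{tbl:Multiplicity1a}--\ref{tbl:Multiplicity2} of \S\ref{s:Formulas}, and then identifying the DPF type through the stopping criteria of Algorithm \ref{alg:Classification}, which encode the pertinent results of \cite[Thm. 10.1]{Ma2a} and \cite[Thm. 2.1]{Ma2b}.

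For case (1), the prime $q \equiv \pm 7 \,(\mathrm{mod}\,25)$ is a free divisor of the conductor with $u=1$, $v=0$, $t=1$, $e_0=0$ (species $2$); Table \ref{tbl:Multiplicity2} gives $m = 4 \cdot X_{-1} = 1$, a singulet, and the first bail-out of Step 1 of Algorithm \ref{alg:Classification} yields type $\vartheta$. For case (2), the restrictive condition on $q$ places $N$ in species $1\mathrm{b}$ with $u=0$, $v=1$, $t=1$; Table \ref{tbl:Multiplicity1b} gives $m = X_1 = 1$, and the second branch of Step 1 yields type $\varepsilon$. These two cases are exactly items (1) and (2) of \cite[Thm. 10.1]{Ma2a}.

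For case (3), the same condition on $q$ but with $5 \mid D$ places $N$ in species $1\mathrm{a}$ with $t=1$, giving the quartet $m = 4^1 = 4$ via Table \ref{tbl:Multiplicity1a}. Since no prime $\ell \equiv \pm 1 \,(\mathrm{mod}\,5)$ divides the conductor, the spaces of intermediate and relative DPF vanish ($I = R = 0$) by the dimension bounds of \cite[Cor. 4.2, Cor. 4.3]{Ma2a}, restricting the type of every member of the quartet to $\{\gamma, \varepsilon, \vartheta, \eta\}$. The existence of a restrictive prime $q \not\equiv \pm 7 \,(\mathrm{mod}\,25)$ precludes a norm representation $N_{N/K}(Z) = \zeta$ for any unit $Z \in U_N$, excluding $\vartheta$ and $\eta$, and then \cite[Thm. 8.1]{Ma2a} forces type $\varepsilon$ for each of the four members. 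Case (4), in which $f = q_1 q_2$ places $N$ in species $2$ with $u=0$, $v=2$, $t=2$ and multiplicity $m = 4^0 \cdot X_1 = 1$ from Table \ref{tbl:Multiplicity2}, follows by the same exclusion argument.

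The principal obstacle is to prove not merely that $\eta$ is a \emph{permissible} norm from $U_N$ but that it actually is such a norm, so that the unit norm index $(U_K : N_{N/K}(U_N))$ equals $5$ rather than $5^2$ and $U = 1$, as required to separate type $\varepsilon$ from type $\gamma$. This is the deepest ingredient, established by combining the Hasse--Iwasawa theorem on the Herbrand quotient of $U_N$ over $G = \mathrm{Gal}(N/K)$ with the decomposition analysis of the restrictive primes in the relative Kummer extension $N/K$, as carried out in \cite[Thm. 10.1]{Ma2a} and refined in \cite[Thm. 2.1]{Ma2b}. Once this is granted, the unambiguous type assignment in all four parametrized families is immediate.
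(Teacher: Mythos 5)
Your proposal is correct and follows essentially the same route as the paper: the paper's own proof consists of the multiplicity bookkeeping plus citations to \cite{Ma2a} (items (1) and (2) of Thm.~10.1 for the prime conductors, Thm.~10.6 for the two composite cases), which is exactly what you reconstruct. Your multiplicity computations from Tables \ref{tbl:Multiplicity1a}--\ref{tbl:Multiplicity2} are all right, and the reduction of cases (3) and (4) to the set $\lbrace\gamma,\varepsilon\rbrace$ via $I=R=0$ and the exclusion of $\zeta$ as a unit norm is sound. The one place where you diverge is in locating the difficulty: you present the separation of $\varepsilon$ from $\gamma$ as requiring a direct proof that $\eta$ is a norm from $U_N$, so that $U=1$ rather than $U=2$. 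But in both composite cases the conductor has only $T=2$ prime divisors ($5$ and $q$ in case (3), $q_1$ and $q_2$ in case (4)), so the ideal-theoretic bound $1\le A\le\min(3,T)=2$ of \eqref{eqn:Dimensions} already prohibits $A=3$ and hence type $\gamma$ outright --- this is precisely how the paper argues the analogous narrowing in the proof of Theorem \ref{thm:PrimeRadicands}, item (4), and in \S\ \ref{ss:ManyPrmDiv}. Once $\gamma$, $\eta$ and $\vartheta$ are excluded, $A=2$, $I=R=0$ force $U=1$ through the Hasse--Iwasawa relation $U+1=A+I+R$ of \eqref{eqn:AIR}, so the fact that $\eta$ is a norm is an output of the classification rather than an input. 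Your version is not wrong --- the unit-norm computation in \cite{Ma2a} does establish the same conclusion --- but the dimension bound gives the exclusion of $\gamma$ more cheaply and keeps the argument entirely on the ideal side.
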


In fact, the shape of the conductors in Theorem
\ref{thm:InfSimCls}
does not only determine the refined Dedekind species and the DPF type,
but also the structure of the \(5\)-class groups of the fields \(L\), \(M\) and \(N\).

\begin{corollary}
\label{cor:InfSimCls}
The invariants of the similarity classes
defined by the four infinite sequences of conductors in Theorem
\ref{thm:InfSimCls}
are given as follows, in the same order:
\begin{equation}
\label{eqn:Prototype7Theta}
\begin{aligned}
& \mathbf{\lbrack 7\rbrack}, \text{ species \(2\), }
(e_0;t,u,v,m;n,s_2,s_4)=(0;1,1,0,1;1,0,0), \\
& \text{type \(\vartheta\), }(U,\eta,\zeta;A,I,R)=(0,\times,\times;1,0,0),
\text{ and } (V_L,V_M,V_N;E)=(0,0,0;5);
\end{aligned}
\end{equation}
\begin{equation}
\label{eqn:Prototype2Epsilon}
\begin{aligned}
& \mathbf{\lbrack 2\rbrack}, \text{ species \(1\mathrm{b}\), }
(e_0;t,u,v,m;n,s_2,s_4)=(2;1,0,1,1;1,0,0), \\
& \text{type \(\varepsilon\), }(U,\eta,\zeta;A,I,R)=(1,\times,-;2,0,0),
\text{ and } (V_L,V_M,V_N;E)=(0,0,0;5);
\end{aligned}
\end{equation}
\begin{equation}
\label{eqn:Prototype10Epsilon}
\begin{aligned}
& \mathbf{\lbrack 10\rbrack}, \text{ species \(1\mathrm{a}\), }
(e_0;t,u,v,m;n,s_2,s_4)=(6;1,0,1,4;1,0,0), \\
& \text{type \(\varepsilon\), }(U,\eta,\zeta;A,I,R)=(1,\times,-;2,0,0),
\text{ and } (V_L,V_M,V_N;E)=(0,0,0;5);
\end{aligned}
\end{equation}
\begin{equation}
\label{eqn:Prototype18Epsilon}
\begin{aligned}
& \mathbf{\lbrack 18\rbrack}, \text{ species \(2\), }
(e_0;t,u,v,m;n,s_2,s_4)=(0;2,0,2,1;2,0,0), \\
& \text{type \(\varepsilon\), }(U,\eta,\zeta;A,I,R)=(1,\times,-;2,0,0),
\text{ and } (V_L,V_M,V_N;E)=(0,0,0;5).
\end{aligned}
\end{equation}
The pure metacyclic fields \(N\) associated with these four similarity classes are Polya fields.
\end{corollary}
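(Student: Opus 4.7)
The plan is to verify, for each of the four infinite parametrized families in Theorem~\ref{thm:InfSimCls}, the three invariant multiplets asserted in Corollary~\ref{cor:InfSimCls}, and then to deduce the Polya property. The structure is uniform across the four cases: the refined Dedekind species follows by inspection of the conductor, the DPF tuple is dictated by Theorem~\ref{thm:InfSimCls} combined with the appropriate row of Table~\ref{tbl:DPFTypes}, and the vanishing of all three $5$-valuations $V_L, V_M, V_N$ reduces to the Polya characterization.

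First I would read off the refined Dedekind species $(e_0;t,u,v,m;n,s_2,s_4)$. The exponent $e_0\in\{0,2,6\}$ of $5$ in $f^4$ and the list of rational primes dividing $f$ are immediate from the shape of $f$. The split counters satisfy $s_2=s_4=0$ in every case, because all primes $q$ appearing in Theorem~\ref{thm:InfSimCls} satisfy $q\equiv\pm 2\pmod{5}$ (using $\pm 7\equiv\pm 2\pmod 5$), hence are inert in the relative extension and contribute to $n$ only. The partition $t=u+v$ into free and restrictive primes is dictated by the congruence conditions imposed in the theorem: case (1) has $u=1,v=0$, cases (2)--(4) have $u=0,v=t$. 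The multiplicity $m$ is then obtained by substituting $(t,u,v)$ into the appropriate row of Table~\ref{tbl:Multiplicity1a}, \ref{tbl:Multiplicity1b}, or \ref{tbl:Multiplicity2}, giving $m=1,1,4,1$ respectively.

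Second, the DPF symbol (\(\vartheta\) in case (1), \(\varepsilon\) in cases (2)--(4)) is provided by Theorem~\ref{thm:InfSimCls}; reading across the relevant row of Table~\ref{tbl:DPFTypes} immediately yields $(U,\eta,\zeta;A,I,R)=(0,\times,\times;1,0,0)$ in case (1) and $(1,\times,-;2,0,0)$ in cases (2)--(4), matching the assertions.

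Third, I would establish the triviality $V_L=V_M=V_N=0$, the index $E=5$, and the Polya property simultaneously. The key observation is that in all four cases the dimension $A$ of absolute DPF equals the total number $T$ of rational primes ramifying in $N/K$ (including the prime above $5$ when $e_0>0$): $T=A=1$ in case (1) and $T=A=2$ in cases (2)--(4). By the Polya characterization \cite[Thm.~10.5]{Ma2a}, the equality $A=T$ together with $I=R=0$ is exactly the condition for $N$ to be a Polya field, settling the final assertion. For the vanishing of the $5$-class numbers I would combine the Hasse--Iwasawa Herbrand quotient formula for $G=\mathrm{Gal}(N/K)$ with Parry's class number relation \cite[(5.1)]{Ma2a}: the Herbrand identity forces the order of the ambiguous $5$-class group to be trivial once $I=R=0$ and $A=T$, hence $5\nmid h_N$, and the quotients down to $M$ and $L$ transmit this to $V_M=V_L=0$. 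The relation \eqref{eqn:5ClsGrp}, $V_N=4V_L+E-5$, then yields $E=5$.

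The main obstacle is Step~3: the logical chain from ``$A=T$ with $I=R=0$'' to the simultaneous vanishing of $V_L,V_M,V_N$ must be made rigorous by carefully bookkeeping the contributions of the norm index $(U_K:N_{N/K}(U_N))=5^U$ and the kernel of capitulation across the subfield tower $\mathbb{Q}\subset L\subset M\subset N$ and $\mathbb{Q}\subset K\subset N$. Once this reduction is in place, the four cases are essentially uniform and the remaining verifications are a matter of pattern matching with Tables~\ref{tbl:Multiplicity1a}--\ref{tbl:Multiplicity2} and \ref{tbl:DPFTypes}.
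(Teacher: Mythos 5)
Your Steps 1 and 2 (reading off the refined Dedekind species, the counters, the multiplicity from Tables~\ref{tbl:Multiplicity1a}--\ref{tbl:Multiplicity2}, and the sextuple $(U,\eta,\zeta;A,I,R)$ from Table~\ref{tbl:DPFTypes}) are correct, but they constitute the trivial part of the corollary. The paper does not argue these at all; for the substantive assertions it defers entirely to external results: Remark~\ref{rmk:InfSimCls} attributes $(V_L,V_M,V_N;E)=(0,0,0;5)$ to Parry \cite[Thm.~IV, p.~481]{Pa}, and the printed proof reduces the composite-conductor cases $f^4=5^6\cdot q^4$ and $f=q_1\cdot q_2$ to \cite[Thm.~10.6]{Ma2a}. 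So the entire content of the corollary sits in your Step~3, and that step --- as you yourself flag --- is not actually carried out.

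The gap is concrete. The Hasse--Iwasawa identity $U+1=A+I+R$ together with $A=T$ and $I=R=0$ only yields triviality of the \emph{strongly} ambiguous classes, since $\dim_{\mathbb{F}_5}(\mathcal{I}_{N/K}/\mathcal{I}_K)-\dim_{\mathbb{F}_5}(\mathcal{P}_{N/K}/\mathcal{P}_K)=T-(A+I+R)=T-U-1=0$ in all four cases. To conclude $5\nmid h_N$ you need the full ambiguous class group $\mathrm{Cl}_5(N)^{G}$ to vanish, and the ambiguous class number formula gives $\#\mathrm{Cl}_5(N)^{G}=5^{T-1}/(U_K:U_K\cap N_{N/K}(N^\times))$. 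For $T=1$ (the class $\mathbf{\lbrack 7\rbrack}$) this is automatic, but for $T=2$, $U=1$ (the classes $\mathbf{\lbrack 2\rbrack}$, $\mathbf{\lbrack 10\rbrack}$, $\mathbf{\lbrack 18\rbrack}$) it requires the additional identity $U_K\cap N_{N/K}(N^\times)=N_{N/K}(U_N)$, i.e.\ that every unit of $K$ which is a global norm of an \emph{element} is already the norm of a \emph{unit}. That is not a formal consequence of the data you have assembled; it is precisely the input supplied by Parry's Theorem~IV, resp.\ \cite[Thm.~10.6]{Ma2a}. (The surrounding steps are fine: once $V_N=0$, the injections $\mathrm{Cl}_5(L)\hookrightarrow\mathrm{Cl}_5(N)$ and $\mathrm{Cl}_5(M)\hookrightarrow\mathrm{Cl}_5(N)$, valid because $\lbrack N:L\rbrack=4$ and $\lbrack N:M\rbrack=2$ are prime to $5$, give $V_L=V_M=0$; relation \eqref{eqn:5ClsGrp} then gives $E=5$; and $A=T$ with $I=R=0$ does yield the Polya property via \cite[Thm.~10.5]{Ma2a}.) To complete the argument you must either cite these external theorems, as the paper does, or prove the unit-norm identity directly, e.g.\ by a local norm computation at the two ramified primes.
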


\begin{remark}
\label{rmk:InfSimCls}
The statements concerning \(5\)-class groups in Corollary
\ref{cor:InfSimCls}
were proved by Parry in
\cite[Thm. IV, p. 481]{Pa},
where Formula (10) gives the shape of radicands
associated with the conductors in our Theorem
\ref{thm:InfSimCls}.
\end{remark}

\begin{proof}
(of Theorem
\ref{thm:InfSimCls}
and Corollary
\ref{cor:InfSimCls})
It only remains to show the claims for the composite radicands
associated with conductors \(f^4=5^6\cdot q^4\) and \(f=q_2\cdot q_2\).
See
\cite[Thm. 10.6]{Ma2a}.
\end{proof}


For similarity classes distinct from the four infinite classes in Theorem
\ref{thm:InfSimCls}
we cannot provide deterministic criteria for the DPF type
and for the homogeneity of multiplets with \(m>1\).
In general, the members of a multiplet belong to distinct similarity classes,
thus giving rise to \textit{heterogeneous} DPF types.
We explain these phenomena with the simplest cases
where only two DPF types are involved
(type splitting).

\begin{theorem}
\label{thm:EpsilonEta}
Each of the following infinite sequences of conductors \(f=f_{N/K}\)
admits precisely two DPF types of the pure metacyclic fields \(N\)
in the associated quartet with \(m=4\) members.
\begin{enumerate}
\item
\(f^4=5^6\cdot q^4\) with \(q\in\mathbb{P}\), \(q\equiv\pm 7\,(\mathrm{mod}\,25)\)
gives rise to a quartet with possibly heterogeneous DPF type \((\varepsilon^x,\eta^y)\), \(x+y=4\),
\item
\(f=q_1\cdot q_2\) with \(q_i\in\mathbb{P}\), \(q_i\equiv\pm 7\,(\mathrm{mod}\,25)\)
gives rise to a quartet with possibly heterogeneous DPF type \((\varepsilon^x,\eta^y)\), \(x+y=4\).
\end{enumerate}
\end{theorem}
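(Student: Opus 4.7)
The plan is to eliminate all DPF types except $\varepsilon$ and $\eta$ by a chain of structural constraints, with the congruence $q\equiv\pm 7\pmod{25}$ being used in two quite different ways: first to kill the split-prime counters, then to control local norms at the ramified primes.

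First, since $q\equiv\pm 7\pmod{25}$ implies $q\equiv\pm 2\pmod{5}$, the multiplicative order of any such prime in $(\mathbb{Z}/5\mathbb{Z})^{\times}$ equals $4$, and the prime is inert in $K=\mathbb{Q}(\zeta_5)$. Hence no prime dividing the conductor $f$ is $2$- or $4$-split in $K$, so $s_2=s_4=0$ in both cases (1) and (2). Invoking the ambiguous-ideal basis formulas and dimension bounds of \cite{Ma2a} for intermediate DPF (Thm.~4.3 and Cor.~4.2) and relative DPF (Thm.~4.4 and Cor.~4.3) yields $I=R=0$. From Table \ref{tbl:DPFTypes}, every type requiring $I\geq 1$ or $R\geq 1$ — that is, all $\alpha_i$, $\beta_i$, $\delta_i$, and $\zeta_i$ — is excluded, leaving only $\gamma$, $\varepsilon$, $\eta$, $\vartheta$ as candidates.

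Second, in both cases the conductor has exactly $T=2$ distinct rational prime divisors ($5$ and $q$ in (1); $q_1$ and $q_2$ in (2)), so $\dim_{\mathbb{F}_5}\mathcal{I}_{L/\mathbb{Q}}/\mathcal{I}_{\mathbb{Q}}=2$ and a fortiori $A\leq 2$. This rules out $\gamma$, which requires $A=3$.

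The genuine obstacle is to exclude $\vartheta$, i.e.\ to show $A=2$ rather than $A=1$. By the Hasse-Iwasawa relation $A+I+R=U+1$ this is equivalent to $U\geq 1$, meaning that at least one of $\eta$ (the fundamental unit of $K^{+}$) and $\zeta=\zeta_5$ fails to be a global norm from $U_N$. I would handle this via Hasse's norm theorem: a unit of $K$ is a global norm iff it is a local norm at every place. For each totally and tamely ramified local extension $N_{\mathfrak{Q}}/K_{\mathfrak{q}}$ above a prime $q_i$, the local norm index is $5$, and the image of $N(N_{\mathfrak{Q}}^{\times})$ in $U_{K_{\mathfrak{q}}}/U_{K_{\mathfrak{q}}}^{5}$ can be computed explicitly by local reciprocity; the congruence $q_i\equiv\pm 7\pmod{25}$ is precisely what controls which of $\eta$, $\zeta$ lies in this local norm group. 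Combining these local conditions with the analogous (more delicate) calculation at the prime above $5$ in case (1) forces $U_K/N_{N/K}(U_N)\cong\mathbb{F}_5$, i.e.\ $U=1$ and hence $A=2$, excluding $\vartheta$.

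Having established $A=2$, $I=R=0$, $U=1$, each of the four fields in the quartet is of type $\varepsilon$ (when $\eta\in N_{N/K}(U_N)$ and $\zeta\notin$) or of type $\eta$ (the reverse); since distinct radicands $D$ in the same conductor orbit can realise either alternative, the quartet displays the distribution $(\varepsilon^{x},\eta^{y})$ with $x+y=m=4$ as claimed. The hard part is unmistakably the local-norm computation at the ramified primes, which hinges entirely on the class-field-theoretic meaning of the congruence $q\equiv\pm 7\pmod{25}$.
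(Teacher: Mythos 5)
Your reduction to the candidate set $\lbrace\gamma,\varepsilon,\eta,\vartheta\rbrace$ and the elimination of $\gamma$ are correct and run along the same lines as the paper's proof of Theorem~\ref{thm:PrimeRadicands}: $q\equiv\pm 7\pmod{25}$ forces $q$ inert in $K$, hence $s_2=s_4=0$ and $I=R=0$, and the two ramified primes give $A\le 2$. The gap is in the exclusion of type $\vartheta$, and it is not a technicality, because your local-norm strategy points in the wrong direction there. For the tame, totally ramified local extension above $q_i$ (inert in $K$, residue field $\mathbb{F}_{q_i^4}$), a unit of $K$ is a local norm if and only if its residue is a fifth power, and $\zeta_5$ is a fifth power in $\mathbb{F}_{q_i^4}^{\times}$ exactly when $25\mid q_i^4-1$, i.e.\ exactly when $q_i\equiv\pm 1,\pm 7\pmod{25}$. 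So the hypothesis $q_i\equiv\pm 7\pmod{25}$ \emph{removes} the only obstruction the congruence controls: $\zeta$ is a local norm at every ramified prime. (This is precisely why types $\eta$ and $\vartheta$ are admissible here at all, and why the complementary cases $q\not\equiv\pm 7\pmod{25}$ of Theorem~\ref{thm:InfSimCls} \emph{can} be settled by your argument.) Whether the fundamental unit $\eta$ of $K^{+}$ is a local norm at $\mathfrak{q}_i$ is governed by the quintic residue symbol of $\eta$ modulo $q_i$, which is \emph{not} a function of $q_i\bmod 25$ (it is a Chebotarev condition in $K(\sqrt[5]{\eta})/K$, independent of the condition modulo $25$); for a positive proportion of admissible $q_i$ it is trivial, and then every unit of $K$ is a local norm at every place, since in case (2) the place above $5$ is unramified in $N/K$ and all infinite places are complex. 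Hasse's norm theorem then produces no obstruction whatsoever, so your claim that the local conditions ``force'' $U_K/N_{N/K}(U_N)\cong\mathbb{F}_5$ is unsupported.

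In that situation you must still rule out $U=0$, equivalently show $A=2$, equivalently show that the individual ramified prime $\mathfrak{q}_1$ (not merely the radical ideal $\mathfrak{q}_1\mathfrak{q}_2^{e}$) is principal in $L$. That is a statement about norms of \emph{units} as opposed to norms of \emph{elements} --- exactly the discrepancy measured by the Herbrand quotient --- and it cannot be decided by local class field theory alone; your proposal supplies no argument for it. Note that the paper itself leaves the proof body of this theorem empty and defers the accompanying class-number statements to conjecture, so the step you are missing is genuinely the hard core of the result (presumably contained in the unpublished reference \cite{Ma2a}). The wild computation at the prime above $5$ in case (1), which you acknowledge but do not carry out, is a second, smaller hole; the tame step is where the argument already fails.
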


\begin{example}
\label{exm:EpsilonEta}
It is not easy to find complete quartets,
whose members are spread rather widely.
The smallest quartet \((35,175,245,4375)=(5\cdot 7,5^2\cdot 7,5\cdot 7^2,5^4\cdot 7)\)
belonging to the first infinite sequence
contains the member \(D=4375\) outside of the range of our systematic computations.
We have determined its DPF type separately
and thus discovered a homogeneous quartet of type \((\eta,\eta,\eta,\eta)\).
However, we cannot generally exclude the occurrence of heterogeneous quartets.
\end{example}

\begin{corollary}
\label{cor:EpsilonEta}
The invariants of the similarity classes
defined by the two infinite sequences of conductors in Theorem
\ref{thm:EpsilonEta}
are given as follows, in the same order.
The statements concerning \(5\)-class groups are only conjectural.
Each sequence splits into two similarity classes. \\
The classes for \(f^4=5^6\cdot q^4\) are:
\begin{equation}
\label{eqn:Prototype35Eta}
\begin{aligned}
& \mathbf{\lbrack 35\rbrack}, \text{ species \(1\mathrm{a}\), }
(e_0;t,u,v,m;n,s_2,s_4)=(6;1,1,0,4;1,0,0), \\
& \text{type \(\eta\), }(U,\eta,\zeta;A,I,R)=(1,-,\times;2,0,0),
\text{ and } (V_L,V_M,V_N;E)=(0,0,1;6);
\end{aligned}
\end{equation}
\begin{equation}
\label{eqn:Prototype785Epsilon}
\begin{aligned}
& \mathbf{\lbrack 785\rbrack}, \text{ species \(1\mathrm{a}\), }
(e_0;t,u,v,m;n,s_2,s_4)=(6;1,1,0,4;1,0,0), \\
& \text{type \(\varepsilon\), }(U,\eta,\zeta;A,I,R)=(1,\times,-;2,0,0),
\text{ and } (V_L,V_M,V_N;E)=(1,2,4;5).
\end{aligned}
\end{equation}
The classes for \(f=q_1\cdot q_2\) are:
\begin{equation}
\label{eqn:Prototype301Eta}
\begin{aligned}
& \mathbf{\lbrack 301\rbrack}, \text{ species \(2\), }
(e_0;t,u,v,m;n,s_2,s_4)=(0;2,2,0,4;2,0,0), \\
& \text{type \(\eta\), }(U,\eta,\zeta;A,I,R)=(1,-,\times;2,0,0),
\text{ and } (V_L,V_M,V_N;E)=(0,0,1;6);
\end{aligned}
\end{equation}
\begin{equation}
\label{eqn:Prototype749Epsilon}
\begin{aligned}
& \mathbf{\lbrack 749\rbrack}, \text{ species \(2\), }
(e_0;t,u,v,m;n,s_2,s_4)=(0;2,2,0,4;2,0,0), \\
& \text{type \(\varepsilon\), }(U,\eta,\zeta;A,I,R)=(1,\times,-;2,0,0),
\text{ and } (V_L,V_M,V_N;E)=(1,2,4;5).
\end{aligned}
\end{equation}
All pure metacyclic fields \(N\) associated with these four similarity classes are Polya fields.
\end{corollary}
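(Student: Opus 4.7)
The plan is to break the verification into four layers, treating the two sequences of conductors uniformly whenever possible, and drawing on Theorem \ref{thm:EpsilonEta} together with Table \ref{tbl:DPFTypes} for the DPF content, while isolating the class-group statement as the conjectural ingredient.

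First I would compute the refined Dedekind species $(e_0;t,u,v,m;n,s_2,s_4)$ directly from the shape of the conductor. For $f^4=5^6\cdot q^4$ with $q\equiv\pm 7\,(\mathrm{mod}\,25)$ the field is of species $1\mathrm{a}$, so $e_0=6$, $t=1$, and since $q$ is a free prime we have $u=1$, $v=0$. The congruence $q\equiv\pm 7\,(\mathrm{mod}\,25)$ forces $q\equiv\pm 2\,(\mathrm{mod}\,5)$, hence $q$ is inert in $K=\mathbb{Q}(\zeta_5)$, giving $s_2=s_4=0$ and $n=1$; the multiplicity $m=4$ is read off Table \ref{tbl:Multiplicity1a} with $t=1$. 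The analogous computation for $f=q_1\cdot q_2$ with $q_i\equiv\pm 7\,(\mathrm{mod}\,25)$ yields species $2$ with $(e_0;t,u,v,m;n,s_2,s_4)=(0;2,2,0,4;2,0,0)$, the multiplicity $m=4$ now coming from Table \ref{tbl:Multiplicity2} with $(u,v)=(2,0)$. This establishes the first line of each of the four displays \eqref{eqn:Prototype35Eta}--\eqref{eqn:Prototype749Epsilon} and confirms that the two sequences give exactly two similarity classes each once a DPF type is specified.

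Next I would pin down the DPF invariants $(U,\eta,\zeta;A,I,R)$. Theorem \ref{thm:EpsilonEta} asserts that the only DPF types occurring in these quartets are $\varepsilon$ and $\eta$, and Table \ref{tbl:DPFTypes} then reads off $(U,\eta,\zeta;A,I,R)=(1,\times,-;2,0,0)$ for type $\varepsilon$ and $(1,-,\times;2,0,0)$ for type $\eta$. The Polya property for all four classes then follows at once: since $s_2=s_4=0$ there can be no intermediate or relative DPF, so $I=R=0$ is forced, and the dimension $A=2$ equals the maximum possible value (namely $1+t-s_2-s_4$ up to the adjustment built into the definition) in each configuration, which is the Polya criterion of \cite[Thm. 10.5]{Ma2a} --- alternatively one invokes the corresponding step of Algorithm \ref{alg:Classification}, where the specified shape of the conductor already forces the Polya conclusion in Step 1 or Step 2.

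Finally, for the third line of each display, namely the triple $(V_L,V_M,V_N;E)$, I would first reduce $V_N$ to $V_L$ and $E$ through the relation $V_N=4V_L+E-5$ of Definition \ref{dfn:QuinticSimilarity}, and $E$ itself is already determined by the DPF type via Table \ref{tbl:LogInd}. Thus only $V_L$ and $V_M$ remain, and for the classes of type $\eta$ we expect $(V_L,V_M)=(0,0)$ while for the classes of type $\varepsilon$ the numerical data point to $(V_L,V_M)=(1,2)$. These last statements are flagged in the corollary as conjectural, and the plan is to present the numerical evidence from Tables \ref{tbl:PureQuinticFields50}--\ref{tbl:PureQuinticFields1000} rather than to attempt a proof. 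The main obstacle is therefore not the DPF classification, which falls out of Theorem \ref{thm:EpsilonEta} and the ambient tables, but the rigorous determination of $V_L$ and $V_M$: one would need a genus-theoretic or Scholz-style argument for the $5$-class number of a pure quintic field of prescribed radicand shape, which is exactly where the current methods stop short and the statement remains conditional.
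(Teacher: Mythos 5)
Your proposal is correct in substance, but it is worth knowing that the paper's own proof environment for this corollary is literally empty: it contains only the parenthetical header ``(of Theorem \ref{thm:EpsilonEta} and Corollary \ref{cor:EpsilonEta})'' and no argument, with the $5$-class-group claims relegated to the computational verification described in Remark \ref{rmk:EpsilonEta} and the $\lbrace\varepsilon,\eta\rbrace$ dichotomy resting on Theorem \ref{thm:EpsilonEta} (itself deferred, like the neighbouring results, to the companion paper \cite{Ma2a}). Your reconstruction is therefore more explicit than what the paper supplies, and its ingredients are sound: the species and multiplicity data follow mechanically from the conductor shape and Tables \ref{tbl:Multiplicity1a} and \ref{tbl:Multiplicity2}; the sextuple $(U,\eta,\zeta;A,I,R)$ is read off Table \ref{tbl:DPFTypes} once the type is known; and since $s_2=s_4=0$ forces $I=R=0$, both admissible types have $A=2=T$, which is exactly the Polya criterion of \cite[Thm.\ 10.5]{Ma2a}. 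Two small caveats. First, the claim that each sequence splits into \emph{exactly} two similarity classes is not a consequence of the type dichotomy alone: it also requires that $(V_L,V_M,V_N;E)$ be constant within each type, which is precisely the conjectural part, so the ``two classes'' count inherits that conditional status. Second, you invoke Table \ref{tbl:LogInd} to pin down $E$ from the DPF type, but the paper explicitly describes that table as recording heuristic connections still awaiting rigorous proof (\S\ \ref{ss:OpenQuestions}); since $E=5+V_N-4V_L$ is tied to the class numbers anyway, this is consistent with declaring the whole third line conjectural, but it should not be presented as an unconditional input.
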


\begin{proof}
(of Theorem
\ref{thm:EpsilonEta}
and Corollary
\ref{cor:EpsilonEta})
\end{proof}

\begin{remark}
\label{rmk:EpsilonEta}
The statements on \(5\)-class groups in Corollary
\ref{cor:EpsilonEta}
have been verified for all examples with \(2\le D<1000\)
by our computations.
In particular,
the occurrence of the radicands
\(D=749=7\cdot 107\) and \(D=785=5\cdot 157\), both with \(V_L=1\),
proves the impossibility of the general claim \(5\nmid h(L)\)
for the two situations mentioned in
\cite[Lem. 3.3 (ii) and (iv), p. 204]{Ii}
and
\cite[Thm. 5 (ii) and (iv), p. 5]{Ky2},
partially also indicated in
\cite[Thm. IV (11), p. 481]{Pa}.
\end{remark}


\begin{theorem}
\label{thm:EpsilonGamma}
Each of the following infinite sequences of conductors \(f=f_{N/K}\)
admits precisely two DPF types of the pure metacyclic fields \(N\)
in the associated hexadecuplet with \(m=16\) members.
\begin{enumerate}
\item
\(f^4=5^6\cdot q_1^4q_2^4\) with \(q_i\in\mathbb{P}\), \(q_i\equiv\pm 2\,(\mathrm{mod}\,5)\), both \(q_i\not\equiv\pm 7\,(\mathrm{mod}\,25)\)
gives rise to a hexadecuplet with possibly heterogeneous DPF type \((\varepsilon^x,\gamma^y)\), \(x+y=16\),
\item
\(f^4=5^6\cdot q_1^4q_2^4\) with \(q_i\in\mathbb{P}\), \(q_i\equiv\pm 2\,(\mathrm{mod}\,5)\), only one \(q_i\equiv\pm 7\,(\mathrm{mod}\,25)\)
gives rise to a hexadecuplet with possibly heterogeneous DPF type \((\varepsilon^x,\gamma^y)\), \(x+y=16\).
\end{enumerate}
\end{theorem}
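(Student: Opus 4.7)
The plan is to reduce both statements to congruence-theoretic constraints on the decomposition of the primes $q_1, q_2$ in $K = \mathbb{Q}(\zeta_5)$ and in $K^+ = \mathbb{Q}(\sqrt{5})$, and then to invoke Theorem 6.1 of \cite{Ma2a} together with the classification of Table \ref{tbl:DPFTypes}. First I would confirm the multiplicity: both cases belong to species $1\mathrm{a}$ with $t = 2$ rational primes dividing $f$ besides $5$, so Table \ref{tbl:Multiplicity1a} yields $m = 4^t = 16$. The relevant splitting counters are $(u,v,s_2,s_4) = (0,2,0,0)$ in case (1) and $(1,1,0,0)$ in case (2); in particular $s_2 = 0$ because no $q_i \equiv -1 \pmod 5$, and $s_4 = 0$ because no $q_i \equiv +1 \pmod 5$, the hypothesis $q_i \equiv \pm 2 \pmod 5$ excluding both.

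Next I would eliminate every DPF type with $I \ne 0$ or $R \ne 0$. By the explicit $\mathbb{F}_5$-bases given in Steps 3 and 4 of Algorithm \ref{alg:Classification}, the space of intermediate (respectively relative) differential factors is contained in a direct sum indexed by $s_2 + s_4$ (respectively $s_4$) primes, so $s_2 = s_4 = 0$ forces $I = R = 0$. This immediately rules out $\alpha_1, \alpha_2, \alpha_3, \beta_1, \beta_2, \delta_1, \delta_2, \zeta_1, \zeta_2$ in Table \ref{tbl:DPFTypes}. To exclude the remaining $\zeta$-norm types $\vartheta$ and $\eta$, I would invoke the criterion associated to component $5$ of the quartet $(1,2,4,5)$ in \S\ref{ss:Conventions} (rooted in the local analysis of \cite[\S 8]{Ma2a}): a unit $Z \in U_N$ with $N_{N/K}(Z) = \zeta_5$ can exist only if every prime divisor of $D$ lies in $\{5\} \cup \{q \in \mathbb{P} : q \equiv \pm 1, \pm 7 \pmod{25}\}$. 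In case (1) both $q_i$ violate this by hypothesis; in case (2) the single restrictive prime already does. Combined with the Hasse--Iwasawa relation $A + I + R = U + 1$, the only admissible types are $\varepsilon$ (with $U = 1$, $A = 2$) and $\gamma$ (with $U = 2$, $A = 3$).

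Finally, the word \emph{possibly} in the statement merely allows for both homogeneous and mixed hexadecuplets. To justify it, I would exhibit explicit heterogeneous examples already collected in the numerical tables: for case (1), the conductor $5^6 \cdot 2^4 \cdot 3^4$ contains $D = 30$ of type $\gamma$ (No.~22 in Table \ref{tbl:PureQuinticFields50}) and $D = 180$ of type $\varepsilon$ (No.~152 in Table \ref{tbl:PureQuinticFields200}); for case (2), the conductor $5^6 \cdot 2^4 \cdot 7^4$ contains $D = 70$ of type $\gamma$ (No.~56) and $D = 140$ of type $\varepsilon$ (No.~116). The main obstacle is not the type-exclusion step, which is purely combinatorial once $s_2 = s_4 = 0$ is in hand, but rather any attempt to sharpen the theorem: predicting the exact split $(x,y)$ with $x + y = 16$ inside a fixed hexadecuplet would require controlling the $5$-valuation of the unit norm index $(U_K : N_{N/K}(U_N))$ for each individual member, information not encoded in the conductor alone.
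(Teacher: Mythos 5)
Your argument is correct, and it is worth noting that the paper itself offers no proof of this theorem beyond the bare citation \textquotedblleft See \cite[Thm.\ 10.7]{Ma2a}\textquotedblright, so your reconstruction is a genuine addition rather than a paraphrase. Your toolkit is exactly the one the paper deploys for its own Theorem \ref{thm:PrimeRadicands}: the dimension bounds \(0\le I\le\min(2,2(s_2+s_4))\) and \(0\le R\le\min(2,4s_4)\) force \(I=R=0\) once \(s_2=s_4=0\), which by Table \ref{tbl:DPFTypes} leaves only \(\gamma,\varepsilon,\eta,\vartheta\); the congruence criterion of \cite[Thm.\ 8.1, \S\ 8]{Ma2a} (a prime divisor \(q\not\equiv\pm 1,\pm 7\,(\mathrm{mod}\,25)\), \(q\ne 5\), of \(D\) forbids \(\zeta\) as a unit norm) kills \(\eta\) and \(\vartheta\) in both cases; and the Herbrand-quotient relation \(A+I+R=U+1\) then pins \(A\in\lbrace 2,3\rbrace\), i.e.\ type \(\varepsilon\) or \(\gamma\). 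The multiplicity \(m=4^2=16\) from Table \ref{tbl:Multiplicity1a} and the heterogeneous witnesses \(D=30,180\) (resp.\ \(D=70,140\)) are correctly identified. The one caveat is interpretive rather than logical: your argument establishes that at most the two types \(\varepsilon\) and \(\gamma\) can occur in such a hexadecuplet, and the tabulated examples show both are realized for at least one conductor of each shape; if \textquotedblleft admits precisely two DPF types\textquotedblright\ were read as asserting that \emph{every} such hexadecuplet contains both types, neither your argument nor the paper's data (which records the heterogeneous split \((\varepsilon^3,\gamma^{13})\) only for the first hexadecuplet, in Example \ref{exm:EpsilonGamma}) would settle it --- but the phrase \textquotedblleft possibly heterogeneous\textquotedblright\ makes clear the weaker reading is intended, so your proof is complete as it stands.
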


\begin{example}
\label{exm:EpsilonGamma}
It is difficult to find complete hexadecuplets,
whose members are spread rather widely.
The smallest hexadecuplet
\[(30,60,90,120,150,180,240,270,360,540,600,720,810,1350,1620,3750)=\]
\[=(2\cdot 3\cdot 5,\ 2^2\cdot 3\cdot 5,\ 2\cdot 3^2\cdot 5,\ 2^3\cdot 3\cdot 5,\ 
2\cdot 3\cdot 5^2,\ 2^2\cdot 3^2\cdot 5,\ 2^4\cdot 3\cdot 5,\ 2\cdot 3^3\cdot 5,\]
\[2^3\cdot 3^2\cdot 5,\ 2^2\cdot 3^3\cdot 5,\ 2^3\cdot 3\cdot 5^2,\ 2^4\cdot 3^2\cdot 5,\ 
2\cdot 3^4\cdot 5,\ 2\cdot 3^3\cdot 5^2,\ 2^2\cdot 3^4\cdot 5,\ 2\cdot 3\cdot 5^4)\]
belonging to the first infinite sequence
contains the members \(D=1350,1620,3750\) outside of the range of our systematic computations.
We have determined their DPF type separately
and thus discovered a heterogeneous hexadecuplet (in the same order) of type
\[(\varepsilon^3,\gamma^{13})=(\gamma,\gamma,\gamma,\gamma,\gamma,\varepsilon,\varepsilon,\gamma,\gamma,\gamma,\gamma,\gamma,\gamma,\varepsilon,\gamma,\gamma).\]
\end{example}

\begin{corollary}
\label{cor:EpsilonGamma}
The invariants of the similarity classes
defined by the two infinite sequences of conductors in Theorem
\ref{thm:EpsilonGamma}
are given as follows, in the same order.
The statements concerning \(5\)-class groups are only conjectural.
Each sequence splits into two similarity classes. \\
The classes for \(f^4=5^6\cdot q_1^4q_2^4\), both \(q_i\not\equiv\pm 7\,(\mathrm{mod}\,25)\) are:
\begin{equation}
\label{eqn:Prototype30Gamma}
\begin{aligned}
& \mathbf{\lbrack 30\rbrack}, \text{ species \(1\mathrm{a}\), }
(e_0;t,u,v,m;n,s_2,s_4)=(6;2,0,2,16;2,0,0), \\
& \text{type \(\gamma\), }(U,\eta,\zeta;A,I,R)=(2,-,-;3,0,0),
\text{ and } (V_L,V_M,V_N;E)=(0,0,1;6);
\end{aligned}
\end{equation}
\begin{equation}
\label{eqn:Prototype180Epsilon}
\begin{aligned}
& \mathbf{\lbrack 180\rbrack}, \text{ species \(1\mathrm{a}\), }
(e_0;t,u,v,m;n,s_2,s_4)=(6;2,0,2,16;2,0,0), \\
& \text{type \(\varepsilon\), }(U,\eta,\zeta;A,I,R)=(1,\times,-;2,0,0),
\text{ and } (V_L,V_M,V_N;E)=(1,2,4;5).
\end{aligned}
\end{equation}
The classes for \(f^4=5^6\cdot q_1^4q_2^4\), only one \(q_i\equiv\pm 7\,(\mathrm{mod}\,25)\) are:
\begin{equation}
\label{eqn:Prototype70Gamma}
\begin{aligned}
& \mathbf{\lbrack 70\rbrack}, \text{ species \(1\mathrm{a}\), }
(e_0;t,u,v,m;n,s_2,s_4)=(6;2,1,1,16;2,0,0), \\
& \text{type \(\gamma\), }(U,\eta,\zeta;A,I,R)=(2,-,-;3,0,0),
\text{ and } (V_L,V_M,V_N;E)=(0,0,1;6);
\end{aligned}
\end{equation}
\begin{equation}
\label{eqn:Prototype140Epsilon}
\begin{aligned}
& \mathbf{\lbrack 140\rbrack}, \text{ species \(1\mathrm{a}\), }
(e_0;t,u,v,m;n,s_2,s_4)=(6;2,1,1,16;2,0,0), \\
& \text{type \(\varepsilon\), }(U,\eta,\zeta;A,I,R)=(1,\times,-;2,0,0),
\text{ and } (V_L,V_M,V_N;E)=(1,2,4;5).
\end{aligned}
\end{equation}
Only the pure metacyclic fields \(N\) of type \(\gamma\) associated with
\eqref{eqn:Prototype30Gamma}
and
\eqref{eqn:Prototype70Gamma}
are Polya fields.
\end{corollary}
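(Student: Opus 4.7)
The plan is to mirror the argument of the companion Corollary \ref{cor:EpsilonEta}, appealing this time to \cite[Thm.\ 8.1]{Ma2a} to restrict the admissible DPF types and to \cite[Thm.\ 10.5]{Ma2a} for the Polya-field assertion.

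First I will read off the refined Dedekind species invariants directly from the shape of the conductor. In both cases the prescription $f^{4}=5^{6}q_{1}^{4}q_{2}^{4}$ forces $e_{0}=6$ (species $1\mathrm{a}$) and $t=2$, after which Table~\ref{tbl:Multiplicity1a} gives $m=16$. Since $q_{i}\equiv\pm 2\pmod{5}$, neither prime is $\equiv\pm 1\pmod{5}$, so $s_{2}=s_{4}=0$ and $n=t=2$. The split $u+v=2$ is determined by the mod-$25$ congruences: in case~(1) both primes lie outside $\{\pm 1,\pm 7\}\pmod{25}$, giving $(u,v)=(0,2)$, while in case~(2) exactly one lies in $\pm 7\pmod{25}$, giving $(u,v)=(1,1)$.

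Second, I will pin down the DPF type. Because $s_{2}=s_{4}=0$, the kernel spaces appearing in Steps~3 and~4 of Algorithm~\ref{alg:Classification} are trivial, so $I=R=0$. In each case the radicand satisfies the condition $(1,2,4,5)=(\times,-,-,-)$ of \S\ref{ss:Conventions}: $D$ has no prime divisor $\equiv\pm 1\pmod{5}$, and at least one of $q_{1},q_{2}$ satisfies $q\not\equiv\pm 7\pmod{25}$ (in case~(2), the one distinct from the $\pm 7$-prime). By \cite[Thm.\ 8.1]{Ma2a}, only the types $\gamma$ and $\varepsilon$ can occur, distinguished by $(A,U)=(3,2)$ versus $(A,U)=(2,1)$ in accordance with the Herbrand relation $A+I+R=U+1$ of \eqref{eqn:AIR}. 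This yields the required splitting of each hexadecuplet into two similarity classes, and the non-emptiness of both is witnessed by the entries $D=30,180$ in case~(1) and $D=70,140$ in case~(2) of Tables~\ref{tbl:PureQuinticFields50}--\ref{tbl:PureQuinticFields1000}.

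The Polya-field claim then follows from \cite[Thm.\ 10.5]{Ma2a}: in species $1\mathrm{a}$ with $t=2$, the three primes $5,q_{1},q_{2}$ give rise to exactly three ramified primes of $K$ in $N/K$ (since each $q_{i}\equiv\pm 2\pmod{5}$ is inert in $K$), so $N$ is Polya precisely when $A+I+R=3$. Type $\gamma$ achieves this, whereas type $\varepsilon$ falls short by one dimension and leaves an ambiguous prime ideal non-principal. The class-group invariants $(V_{L},V_{M},V_{N};E)$ are stated as conjectural and I leave them so, beyond noting that they satisfy the identity $V_{N}=4V_{L}+E-5$ of \eqref{eqn:5ClsGrp} and agree with every tabulated instance. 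The main obstacle is the potential appearance in case~(2) of a type with a $\zeta$-norm representation (most plausibly $\eta$ or $\vartheta$), whose exclusion requires exactly the existence of a prime divisor of $D$ outside $\pm 7\pmod{25}$---precisely the hypothesis of the theorem, and handled uniformly by \cite[Thm.\ 8.1]{Ma2a}.
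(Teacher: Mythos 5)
Your proposal is correct, but it takes a visibly different route from the paper: the paper's entire proof of Theorem \ref{thm:EpsilonGamma} and Corollary \ref{cor:EpsilonGamma} is a bare citation to \cite[Thm.\ 10.7]{Ma2a}, whereas you reassemble the argument from the ingredients the paper uses for the neighbouring results (Theorems \ref{thm:InfSimCls}, \ref{thm:Polya1or24mod25}, \ref{thm:Polya1or4mod5}): the species/multiplicity bookkeeping from \S\ \ref{s:Formulas}, the dimension bounds \(I\le\min(2,2(s_2+s_4))=0\) and \(R\le\min(2,4s_4)=0\), the restriction to types \(\gamma,\varepsilon\) via \cite[Thm.\ 8.1]{Ma2a} under the \((\times,-,-,-)\) condition, the Polya criterion \(A=T=3\) from \cite[Thm.\ 10.5]{Ma2a}, and the tabulated witnesses \(D=30,180,70,140\) for non-emptiness. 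This buys a self-contained verification that the reader cannot extract from the paper itself. Two small caveats: your phrasing \lq\lq \(N\) is Polya precisely when \(A+I+R=3\)\rq\rq\ is only valid here because \(I=R=0\); the paper's criterion is \(A=T\), and e.g.\ type \(\alpha_1\) with \((A,I,R)=(1,0,2)\) is Polya for \(T=1\) despite \(A+I+R=3\ne 1\). Also, \lq\lq splits into two similarity classes\rq\rq\ (rather than merely two DPF types, each possibly splitting further into states) silently uses the conjectural uniqueness of the \(5\)-class-group state within each type; you do flag the class-group data as conjectural, so this matches the corollary's own hedging, but the two sentences sit in slight tension and could be reconciled explicitly.
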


\begin{proof}
(of Theorem
\ref{thm:EpsilonGamma}
and Corollary
\ref{cor:EpsilonGamma})
See
\cite[Thm. 10.7]{Ma2a}.
\end{proof}


\begin{theorem}
\label{thm:Polya1or24mod25}
A pure metacyclic field \(N=\mathbb{Q}(\zeta_5,\sqrt[5]{\ell})\)
with prime radicand \(\ell\equiv\pm 1\,(\mathrm{mod}\,25)\)
has a prime conductor \(f=\ell\), and possesses the Polya property,
regardless of its DPF type and the complexity of its \(5\)-class group structure.
\end{theorem}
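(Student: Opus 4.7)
The plan is to combine an elementary identification of the conductor with a single principal-factor computation, and then invoke the Polya criterion proved in \cite[Thm. 10.5]{Ma2a}. I first note that since $\ell$ is prime and $\ell\equiv\pm 1\,(\mathrm{mod}\,25)$, the radicand $D=\ell$ belongs to species $2$ with $e_0=0$, $t=1$, $u=1$, $v=0$, so $f^4=\ell^4$ and the multiplicity, read off Table \ref{tbl:Multiplicity2}, is $m(f)=1$. In particular, $N$ is the unique pure metacyclic field with conductor $f=\ell$, independently of whether $\ell\equiv 1\,(\mathrm{mod}\,25)$ (giving $s_4=1$) or $\ell\equiv -1\,(\mathrm{mod}\,25)$ (giving $s_2=1$).

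Next I would exhibit an absolute differential principal factor explicitly. The polynomial $x^5-\ell$ is Eisenstein at $\ell$, so $\ell$ is totally ramified in $L=\mathbb{Q}(\sqrt[5]{\ell})$, and the unique prime $\mathfrak{q}\subset\mathcal{O}_L$ above $\ell$ satisfies $\mathfrak{q}^5=(\ell)=(\sqrt[5]{\ell})^5$. Taking fifth roots of ideals gives $\mathfrak{q}=(\sqrt[5]{\ell})$, which is principal. Consequently, the $\mathbb{F}_5$-space $\mathcal{P}_{L/\mathbb{Q}}/\mathcal{P}_{\mathbb{Q}}$ of absolute principal factors is at least one-dimensional, so $A\ge 1$. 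Since only one rational prime ($T=1$) divides the conductor, the bound $A\le T$ from \cite[Cor. 4.1]{Ma2a} forces $A=T=1$.

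The Polya conclusion then follows directly: by the characterization in \cite[Thm. 10.5]{Ma2a} (applied in Step 2 of Algorithm \ref{alg:Classification}), the equality $A=T$ already suffices for $N$ to be a Polya field. Crucially, this criterion is insensitive to whatever the intermediate and relative invariants $I$ and $R$ turn out to be, and hence insensitive to the resulting DPF type (which, as our tables show, may be $\alpha_1,\alpha_2,\delta_1,\delta_2,\zeta_1,\zeta_2$ or others, with possibly non-elementary $5$-class groups in $L,M,N$, as witnessed for example by the prototypes $\mathbf{\lbrack 281\rbrack}$ and $\mathbf{\lbrack 691\rbrack}$).

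The only nontrivial point is that $A=T$ really delivers the Polya property of the \emph{degree-$20$} field $N$, not merely the principality of $\mathfrak{q}$ in $L$. This is where \cite[Thm. 10.5]{Ma2a} does the work: it translates the equality $A=T$ into the triviality of the Polya (Ostrowski) group of $N$ by pushing the principal generator $\sqrt[5]{\ell}$ up through the Galois correspondence and using that all unramified rational primes contribute trivially. Once that theorem is cited, the argument is complete; no further case distinction on the splitting of $\ell$ in $K$, on the DPF type of $N$, or on the $5$-class group structure is required.
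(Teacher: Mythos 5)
Your proof is correct and follows essentially the same route as the paper: identify the species-$2$ conductor $f=\ell$ with $t=T=1$, use the bound $1\le A\le\min(3,T)$ from \cite[Cor.\ 4.1]{Ma2a} to force $A=T=1$, and invoke \cite[Thm.\ 10.5]{Ma2a} for the Polya property. Your explicit Eisenstein argument showing that the radical $\sqrt[5]{\ell}$ generates the totally ramified prime (hence $A\ge 1$) just makes concrete the lower bound that the paper takes for granted, and the paper's additional enumeration of admissible DPF types (excluding $\alpha_3$) is not needed for the statement as given.
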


\begin{proof}
This is an immediate consequence of
\cite[Thm. 10.5 and Thm. 6.1]{Ma2a},
taking into account that we have the value \(t=1\)
for the number of primes dividing the conductor in the present situation,
and thus the estimate in
\cite[Cor. 4.1]{Ma2a}
yields \(1\le A\le\min(3,t)=\min(3,1)=1\).
For the Polya property we must have \(A=t=1\), according to
\cite[Thm. 10.5]{Ma2a},
which admits the DPF types
\(\alpha_1\), \(\alpha_2\), \(\alpha_3\), \(\delta_1\), \(\delta_2\), \(\zeta_1\), \(\zeta_2\) or \(\vartheta\)
\cite[Thm. 1.3 and Tbl. 1]{Ma2a}.
However,
DPF type \(\alpha_3\) is excluded by 
\cite[Cor. 4.2]{Ma2a},
since the requirement \(s_2+s_4\ge 2\) cannot be fulfilled in our situation where
either \(s_2=0\) and \(s_4=1\) for \(\ell\equiv +1\,(\mathrm{mod}\,25)\)
or \(s_2=1\) and \(s_4=0\) for \(\ell\equiv -1\,(\mathrm{mod}\,25)\).
\end{proof}


\begin{theorem}
\label{thm:Polya1or4mod5}
A pure metacyclic field \(N=\mathbb{Q}(\zeta_5,\sqrt[5]{\ell})\)
with prime radicand \(\ell\equiv\pm 1\,(\mathrm{mod}\,5)\) but \(\ell\not\equiv\pm 1\,(\mathrm{mod}\,25)\)
has a composite conductor \(f^4=5^2\cdot\ell^4\), and
the following conditions are equivalent:
\begin{enumerate}
\item
\(N\) possesses the Polya property.
\item
\((\exists\,\alpha\in L=\mathbb{Q}(\sqrt[5]{\ell}))\,N_{L/\mathbb{Q}}(\alpha)=5\).
\item
The prime ideal \(\mathfrak{p}\in\mathbb{P}_L\) with \(5\mathcal{O}_L=\mathfrak{p}^5\) is principal.
\item
\(N\) is of DPF type either \(\beta_1\) or \(\beta_2\) or \(\varepsilon\).
\end{enumerate}
\end{theorem}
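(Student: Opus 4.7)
The plan is to establish the equivalence by the cyclic chain $(2)\Leftrightarrow(3)\Leftrightarrow(4)\Leftrightarrow(1)$. As a preliminary, the hypothesis $\ell\equiv\pm 1\pmod 5$ together with $\ell\not\equiv\pm 1\pmod{25}$ automatically excludes $\ell\equiv\pm 7\pmod{25}$, so $L$ is of species $1\mathrm{b}$ with conductor $f^4=5^2\ell^4$. The rational primes dividing the conductor are precisely $5$ and $\ell$, giving the parameter $T=2$ of Algorithm~\ref{alg:Classification}. In $L$, let $\mathfrak{p}$ denote the unique prime above $5$ (so $5\mathcal{O}_L=\mathfrak{p}^5$ with residue degree $f(\mathfrak{p}\mid 5)=1$) and $\mathfrak{l}:=(\sqrt[5]{\ell})\mathcal{O}_L$ the unique prime above $\ell$, manifestly principal.

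The equivalence $(2)\Leftrightarrow(3)$ is elementary: if $\mathfrak{p}=\alpha\mathcal{O}_L$, then the equality of ideals $(N_{L/\mathbb{Q}}(\alpha))=N_{L/\mathbb{Q}}(\mathfrak{p})=(5)$ forces $N_{L/\mathbb{Q}}(\alpha)=\pm 5$, with sign adjustable to $+$ by passing from $\alpha$ to $-\alpha$; conversely, $N_{L/\mathbb{Q}}(\alpha)=5$ makes $\alpha\mathcal{O}_L$ an integral ideal of absolute norm $5$, necessarily equal to $\mathfrak{p}$ as the only such prime of $\mathcal{O}_L$.

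For $(3)\Leftrightarrow(4)$, I would appeal to the explicit basis of \cite[Thm.~4.1]{Ma2a}, which identifies $\mathcal{I}_{L/\mathbb{Q}}/\mathcal{I}_\mathbb{Q}$ with the $2$-dimensional $\mathbb{F}_5$-space spanned by $[\mathfrak{p}]$ and $[\mathfrak{l}]$, together with the bound $A\le\min(3,T)=2$ of \cite[Cor.~4.1]{Ma2a}. Since $\mathfrak{l}$ is principal while any nonzero power $\mathfrak{p}^a$ ($1\le a\le 4$) is principal iff $\mathfrak{p}$ is, it follows that $A=2\Leftrightarrow\mathfrak{p}$ is principal. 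Among the $13$ DPF types in Table~\ref{tbl:DPFTypes}, type $\gamma$ requires $A=3$ and is ruled out by $T=2$, while each of $\eta,\zeta_1,\zeta_2,\vartheta$ is characterised by the representability of $\zeta_5$ as a relative unit norm $N_{N/K}(Z)$, $Z\in U_N$; by \cite[Thm.~8.2 and \S 8]{Ma2a} this forces every conductor-prime $q\ne 5$ to satisfy $q\equiv\pm 1,\pm 7\pmod{25}$, which is impossible here. Consequently $A=2$ is equivalent to DPF type lying in $\{\beta_1,\beta_2,\varepsilon\}$.

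The final link $(1)\Leftrightarrow(4)$ is then immediate from the Polya characterisation \cite[Thm.~10.5]{Ma2a}: $N$ is a Polya field precisely when $A=T$, which here reads $A=2$. I expect the main obstacle to be the systematic exclusion of the four $\zeta$-norm types $\eta,\zeta_1,\zeta_2,\vartheta$: this is the one step where the hypothesis $\ell\not\equiv\pm 1\pmod{25}$ is used cohomologically, via the description of the image of the relative norm $N_{N/K}\colon U_N\to U_K$; the remaining implications reduce to bookkeeping with the dimension formulas of Section~\ref{ss:Techniques}.
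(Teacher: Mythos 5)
Your proof is correct and follows essentially the same route as the paper, whose own proof is just a citation chain: Polya $\Leftrightarrow A=t+1=2$ by \cite[Thm.~10.5]{Ma2a}, $A=2$ admits only types $\beta_1,\beta_2,\varepsilon,\eta$ by \cite[Thm.~6.1]{Ma2a}, and $\eta$ is excluded by \cite[Thm.~8.1]{Ma2a} since $\ell\not\equiv\pm 1,\pm 7\,(\mathrm{mod}\,25)$ divides the conductor. You merely make explicit the links $(2)\Leftrightarrow(3)\Leftrightarrow A=2$ that the paper leaves implicit in its references, which is a welcome amplification rather than a different argument.
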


\begin{proof}
This is a consequence of
\cite[Thm. 10.5 and Thm. 6.1]{Ma2a},
taking into account that the prime \(5\) is not included in the current definition of the counter \(t\)
(with value \(t=1\) in the present situation),
and thus the estimate in
\cite[Cor. 4.1]{Ma2a}
must be replaced by \(1\le A\le\min(3,t+1)=\min(3,2)=2\).
For the Polya property we must have \(A=t+1=2\)
\cite[Thm. 10.5]{Ma2a},
which determines the DPF types \(\beta_1\), \(\beta_2\), \(\varepsilon\) or \(\eta\)
\cite[Thm. 1.3 and Tbl. 1]{Ma2a}.
However,
DPF type \(\eta\) is excluded by the prime \(\ell\not\equiv\pm 1,\pm 7\,(\mathrm{mod}\,25)\)
dividing the conductor
(\cite[Thm. 8.1]{Ma2a}).
\end{proof}


Inspired by the last two theorems,
it is worth ones while
to summarize, for each kind of prime radicands,
what is known about the possibilities for differential principal factorizations.

\begin{theorem}
\label{thm:PrimeRadicands}
Let \(N=\mathbb{Q}(\zeta_5,\sqrt[5]{D})\) be a pure metacyclic field
with prime radicand \(D\in\mathbb{P}\).
\begin{enumerate}
\item
If \(D=q\) with \(q\equiv\pm 7\,(\mathrm{mod}\,25)\) or \(q=5\),
then \(N\) is of type \(\vartheta\).
\item
If \(D=\ell\) with \(\ell\equiv -1\,(\mathrm{mod}\,25)\),
then \(N\) is of one of the types \(\delta_2,\zeta_2,\vartheta\).
\item
If \(D=\ell\) with \(\ell\equiv +1\,(\mathrm{mod}\,25)\),
then \(N\) is of one of the types \(\alpha_1,\alpha_2,\delta_1,\delta_2,\zeta_1,\zeta_2,\vartheta\).
\item
If \(D=q\) with \(q\equiv\pm 2\,(\mathrm{mod}\,5)\) but \(q\not\equiv\pm 7\,(\mathrm{mod}\,25)\),
then \(N\) is of type \(\varepsilon\).
\item
If \(D=\ell\) with \(\ell\equiv -1\,(\mathrm{mod}\,5)\) but \(\ell\not\equiv -1\,(\mathrm{mod}\,25)\),
then \(N\) is of one of the types \(\beta_2,\delta_2,\varepsilon\).
\item
If \(D=\ell\) with \(\ell\equiv +1\,(\mathrm{mod}\,5)\) but \(\ell\not\equiv +1\,(\mathrm{mod}\,25)\),
then \(N\) is of one of the types \(\alpha_1,\alpha_2,\beta_1,\beta_2,\delta_1,\delta_2,\varepsilon\).
\end{enumerate}
A pure metacyclic field with prime radicand can never be of any of the types \(\alpha_3,\gamma,\eta\).
\end{theorem}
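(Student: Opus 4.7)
The plan is to handle the six cases uniformly by combining the dimension bounds for absolute, intermediate and relative differential principal factors from Corollaries 4.1--4.3 of \cite{Ma2a} with the characterization of DPF types in Theorem 6.1 and the obstruction to a representation $N_{N/K}(Z)=\zeta$ in Theorem 8.1. For a prime radicand we always have $t\le 1$ prime divisors $q\ne 5$ in the conductor, so the crucial parameters reduce to the pair $(s_2,s_4)$ together with the presence or absence of $5$ in $f$, and this leaves only a short list of $(U,\eta,\zeta;A,I,R)$-tuples to rule out in each case.

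First I would dispose of items (1) and (4), which are exactly the content of items (1) and (2) of \cite[Thm.~10.1]{Ma2a} already invoked in Step~1 of Algorithm~\ref{alg:Classification}. For items (2) and (3) the conductor is $f=\ell$, hence $t=1$ and Theorem~\ref{thm:Polya1or24mod25} forces $A=1$; for $\ell\equiv -1\,(\mathrm{mod}\,25)$ one has $(s_2,s_4)=(1,0)$, whence the bound $R\le 2s_4=0$ from Corollary 4.3 of \cite{Ma2a} eliminates $\alpha_1,\alpha_2,\delta_1,\zeta_1$, and $I\le s_2+s_4=1$ together with the table in Theorem~\ref{thm:DPFTypes} leaves precisely $\delta_2,\zeta_2,\vartheta$. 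For $\ell\equiv +1\,(\mathrm{mod}\,25)$ we have $(s_2,s_4)=(0,1)$, so $I\le 1$ excludes $\alpha_3$ (the only type requiring $I=2$), while $A=1$ rules out $\beta_1,\beta_2,\varepsilon,\eta,\gamma$, leaving the seven types listed in (3).

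For items (5) and (6) the conductor is $f^4=5^2\ell^4$, and Theorem~\ref{thm:Polya1or4mod5} together with Corollary 4.1 of \cite{Ma2a} gives $1\le A\le 2$. The decisive new ingredient is that in both cases the prime $\ell\not\equiv\pm 1,\pm 7\,(\mathrm{mod}\,25)$ is a \emph{restrictive} prime divisor of $f$, so Theorem 8.1 of \cite{Ma2a} forbids the existence of a unit $Z\in U_N$ with $N_{N/K}(Z)=\zeta$; this eliminates the four types $\zeta_1,\zeta_2,\eta,\vartheta$ simultaneously. In case (5) one further has $(s_2,s_4)=(1,0)$, so $R=0$ kills $\alpha_1,\alpha_2,\beta_1,\delta_1$, and what remains among the types with $A\in\{1,2\}$, $I\le 1$ is exactly $\beta_2,\delta_2,\varepsilon$; in case (6) one has $(s_2,s_4)=(0,1)$, so $I\le 1$ kills only $\alpha_3$, and the surviving list is $\alpha_1,\alpha_2,\beta_1,\beta_2,\delta_1,\delta_2,\varepsilon$.

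The concluding negative statement that $\alpha_3,\gamma,\eta$ never occur for prime radicands follows at once from the same mechanisms: $A\le 2$ in all six cases excludes $\gamma$ (which needs $A=3$), $I\le s_2+s_4\le 1$ excludes $\alpha_3$ (which needs $I=2$), and $\eta$ is incompatible with either the prime-conductor constraint of Theorem~\ref{thm:Polya1or24mod25} (which forces $A=1$, whereas $\eta$ needs $A=2$) or with the ζ-norm obstruction of \cite[Thm.~8.1]{Ma2a} in the remaining cases. The main obstacle will be the careful bookkeeping in cases (5) and (6), where one must argue both the dimension bounds and the ζ-norm obstruction in parallel; once Theorem~\ref{thm:Polya1or4mod5} and Theorem 8.1 of \cite{Ma2a} are invoked together, however, the exclusion lists are completely determined by inspection of Table~\ref{tbl:DPFTypes}.
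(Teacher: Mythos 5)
Your overall strategy coincides with the paper's: force $A$, $I$, $R$ down via the dimension bounds of \cite[\S\ 4]{Ma2a}, kill $\zeta_1,\zeta_2,\eta,\vartheta$ via the congruence obstruction of \cite[Thm. 8.1]{Ma2a} when a restrictive prime divides $f$, and read off the surviving rows of Table \ref{tbl:DPFTypes}. Items (1) and (4), the handling of $A$ via $t$, and the elimination of $\gamma$ and $\eta$ all match the paper's argument.

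There is, however, one genuine gap, and it sits exactly at the only delicate point of the proof: the exclusion of type $\alpha_3$. You assert ``$I\le s_2+s_4$'' and attribute it to the corollaries of \cite[\S\ 4]{Ma2a}, but the bound those corollaries provide (and which the paper's proof quotes in \eqref{eqn:Dimensions}) is $I\le\min(2,2(s_2+s_4))$, which for a single split prime ($s_2+s_4=1$) only gives $I\le 2$ and therefore does \emph{not} rule out $\alpha_3$ in items (2), (3), (5), (6). The paper closes this hole with a separate structural observation: for a prime radicand $D=\ell$ splitting in $M$, the radical subspace $\Delta=\langle\sqrt[5]{\ell}\rangle$ is a $1$-dimensional subspace of \emph{absolute} DPF sitting inside the $2$-dimensional space $\Delta\oplus\Delta'$ spanned by the two primes of $M$ over $\ell$, so the intermediate DPF are confined to the $1$-dimensional complement $\Delta'$ and $I\le 1$ follows. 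This argument (or an equivalent justification of your sharper inequality) must be supplied; note that it genuinely uses the primality of the radicand, since for composite $D$ with a split prime divisor the radical need not lie in the span of the primes over $\ell$ and the paper's tables do exhibit $I=2$ with $s_2+s_4=2$. Once that observation is added, the rest of your case analysis is correct and agrees with the paper's.
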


\begin{proof}
By making use of the bounds
\cite[\S\ 4]{Ma2a}
for \(\mathbb{F}_5\)-dimensions
of spaces of differential principal factors (DPF),
\begin{equation}
\label{eqn:Dimensions}
\begin{aligned}
1 \le A \le & \min(3,t), \\
0 \le I \le & \min(2,2(s_2+s_4)), \\
0 \le R \le & \min(2,4s_4),
\end{aligned}
\end{equation}
we can determine the possible DPF types of
pure metacyclic fields \(N=\mathbb{Q}(\zeta_5,\sqrt[5]{D})\)
with prime radicands \(D\in\mathbb{P}\).
We start with a few general observations.

Firstly, if \(D\equiv\pm 1,\pm 7\,(\mathrm{mod}\,25)\), resp. \(D=5\), is prime,
then \(N\) is of Dedekind species \(2\), resp. \(1\mathrm{a}\),
with prime power conductor \(f=D\), resp.  \(f^4=5^6\), and \(t=1\),
whence \(A=1\) and the types \(\beta_1,\beta_2,\gamma,\varepsilon,\eta\) with \(A\ge 2\) are forbidden.
However, if \(D\not\equiv\pm 1,\pm 7\,(\mathrm{mod}\,25)\) and \(D\ne 5\) is prime,
then the congruence requirement eliminates the types \(\zeta_1,\zeta_2,\eta,\vartheta\),
the field \(N\) is of Dedekind species \(1\mathrm{b}\)
with composite conductor \(f^4=5^2\cdot D^4\), and \(t=2\),
whence \(1\le A\le 2\) and type \(\gamma\) with \(A=3\) is discouraged.
So, the types \(\gamma\) and \(\eta\) are generally forbidden for prime radicands.

Secondly, for a prime radicand \(D\equiv\pm 1\,(\mathrm{mod}\,5)\) which splits in \(M\),
the space of radicals \(\Delta=\langle\sqrt[5]{D}\rangle\)
is a \(1\)-dimensional subspace of absolute DPF
contained in the \(2\)-dimensional space \(\Delta\oplus\Delta^\prime\)
of differential factors generated by the two prime ideals of \(M\) over \(D\).
Consequently, in this special situation there arises an additional constraint
\(I\le 1\) for the dimension of the space of intermediate DPF,
which must be contained in the \(1\)-dimensional complement \(\Delta^\prime\).
This generally excludes type \(\alpha_3\) with \(I=2\) for prime radicands.

\begin{enumerate}
\item
If \(D=q\) with \(q\equiv\pm 7\,(\mathrm{mod}\,25)\),
then \(t=1\), \(s_2=s_4=0\), and thus \(A=1\), \(I=R=0\).
These conditions eliminate the types \(\alpha_1,\alpha_2,\alpha_3,\beta_1,\beta_2,\gamma,\delta_1,\delta_2,\varepsilon,\zeta_1,\zeta_2,\eta\)
with either \(A\ge 2\) or \(I\ge 1\) or \(R\ge 1\),
and only type \(\vartheta\) remains admissible.
\item
If \(D=\ell\) with \(\ell\equiv -1\,(\mathrm{mod}\,25)\),
then \(t=s_2=1\), \(s_4=0\), and thus \(A=1\), \(0\le I\le 1\), \(R=0\),
whence the types \(\alpha_1,\alpha_2,\alpha_3,\beta_1,\beta_2,\gamma,\delta_1,\varepsilon,\zeta_1,\eta\)
with either \(A\ge 2\) or \(I=2\) or \(R\ge 1\) are excluded,
and only the types \(\delta_2,\zeta_2,\vartheta\) remain admissible.
\item
If \(D=\ell\) with \(\ell\equiv +1\,(\mathrm{mod}\,25)\),
then \(t=s_4=1\), \(s_2=0\), and thus \(A=1\), \(0\le I\le 1\), \(0\le R\le 2\),
whence the types \(\alpha_3,\beta_1,\beta_2,\gamma,\varepsilon,\eta\)
with either \(A\ge 2\) or \(I=2\) are excluded,
and only the types \(\alpha_1,\alpha_2,\delta_1,\delta_2,\zeta_1,\zeta_2,\vartheta\) remain admissible.
\item
If \(D=q\) with \(q\equiv\pm 2\,(\mathrm{mod}\,5)\) but \(q\not\equiv\pm 7\,(\mathrm{mod}\,25)\),
then \(t=2\), \(s_2=s_4=0\), and thus \(1\le A\le 2\), \(I=R=0\).
These conditions eliminate the types \(\alpha_1,\alpha_2,\alpha_3,\beta_1,\beta_2,\gamma,\delta_1,\delta_2,\zeta_1,\zeta_2\)
with either \(A=3\) or \(I\ge 1\) or \(R\ge 1\),
and only the types \(\varepsilon,\eta,\vartheta\) remain admissible.
However, the congruence requirement modulo \(25\) discourages the types \(\eta,\vartheta\),
and only type \(\varepsilon\) is possible.
\item
If \(D=\ell\) with \(\ell\equiv -1\,(\mathrm{mod}\,5)\) but \(\ell\not\equiv -1\,(\mathrm{mod}\,25)\),
then \(t=2\), \(s_2=1\), \(s_4=0\), and thus \(1\le A\le 2\), \(0\le I\le 1\), \(R=0\),
whence the types \(\alpha_1,\alpha_2,\alpha_3,\beta_1,\gamma,\delta_1,\zeta_1\)
with either \(A=3\) or \(I=2\) or \(R\ge 1\) are forbidden.
The types \(\zeta_2,\eta,\vartheta\) are excluded by congruence conditions,
and only the types \(\beta_2,\delta_2,\varepsilon\) remain admissible.
\item
If \(D=\ell\) with \(\ell\equiv +1\,(\mathrm{mod}\,5)\) but \(\ell\not\equiv +1\,(\mathrm{mod}\,25)\)
then \(t=2\), \(s_2=0\), \(s_4=1\), and thus \(1\le A\le 2\), \(0\le I\le 1\), \(0\le R\le 2\),
whence the types \(\alpha_3,\gamma\)
with either \(A=3\) or \(I=2\) are forbidden.
The types \(\zeta_1,\zeta_2,\eta,\vartheta\) are excluded by congruence conditions,
and only the types \(\alpha_1,\alpha_2,\beta_1,\beta_2,\delta_1,\delta_2,\varepsilon\) remain admissible.
\qedhere
\end{enumerate}
\end{proof}


\begin{example}
\label{exm:PrimeRadicands}
Concerning numerical realizations of Theorem
\ref{thm:PrimeRadicands},
we refer to Corollary
\ref{cor:InfSimCls}
for the parametrized infinite sequences
\(\mathbf{\lbrack 7\rbrack}\) and \(\mathbf{\lbrack 2\rbrack}\)
which realize item (1) and (4).
(See also Tables
\ref{tbl:Theta}
and
\ref{tbl:Epsilon}
for the types \(\vartheta\) and \(\varepsilon\).)
In all the other cases, there occurs \textit{type splitting}:

The similarity class
\(\mathbf{\lbrack 149\rbrack}\)
partially realizes item (2).
(See Table
\ref{tbl:Delta2}
for the type \(\delta_2\).)
Outside the range of our systematic investigations, we found that
the similarity class \(\mathbf{\lbrack 1049\rbrack}\)
realizes type \(\zeta_2\).
Realizations of the type \(\vartheta\) are unknown up to now.

The similarity classes
\(\mathbf{\lbrack 401\rbrack}\), \(\mathbf{\lbrack 151\rbrack}\) and \(\mathbf{\lbrack 101\rbrack}\)
partially realize item (3).
(See Tables
\ref{tbl:Alpha1},
\ref{tbl:Alpha2}
and
\ref{tbl:Zeta1}
for the types \(\alpha_1\), \(\alpha_2\) and \(\zeta_1\).)
Outside the range of our systematic investigations, we found that
the similarity class \(\mathbf{\lbrack 1151\rbrack}\), resp. \(\mathbf{\lbrack 3251\rbrack}\),
realizes type \(\delta_1\), resp. \(\delta_2\).
Realizations of the types \(\zeta_2\) and \(\vartheta\) are unknown up to now.

The similarity classes
\(\mathbf{\lbrack 139\rbrack}\), \(\mathbf{\lbrack 19\rbrack}\) and \(\mathbf{\lbrack 379\rbrack}\)
completely realize item (5).
(See Tables
\ref{tbl:Beta2},
\ref{tbl:Delta2}
and
\ref{tbl:Epsilon}
for the types \(\beta_2\), \(\delta_2\) and \(\varepsilon\).)

The similarity classes
\(\mathbf{\lbrack 31\rbrack}\), \(\mathbf{\lbrack 11\rbrack}\), \(\mathbf{\lbrack 191\rbrack}\) and \(\mathbf{\lbrack 211\rbrack}\)
partially realize item (6).
(See Tables
\ref{tbl:Alpha1},
\ref{tbl:Alpha2},
\ref{tbl:Beta1}
and
\ref{tbl:Delta1}
for the types \(\alpha_1\), \(\alpha_2\), \(\beta_1\) and \(\delta_1\).)
Realizations of the types \(\beta_2\), \(\delta_2\) and \(\varepsilon\) are unknown up to now.
\end{example}


\subsection{Non-elementary \(5\)-class groups}
\label{ss:NonElem}
\noindent
Although most of the \(5\)-class groups of pure metacyclic fields \(N\),
maximal real subfields \(M\) and pure quintic subfields \(L\)
are elementary abelian,
there occur sparse examples with non-elementary structure.
For instance, we have only \(8\) occurrences within the range \(2\le D<10^3\) of our computations: \\
(1) \(\mathrm{Cl}_5(N)\simeq C_{25}\times C_5^3\), \((V_L,V_M,V_N;E)=(1,2,5*;6)\) for \(D=259=7\cdot 37\) (type \(\gamma\)), \\
(2) \(\mathrm{Cl}_5(N)\simeq C_{25}\times C_5^7\), \(\mathrm{Cl}_5(M)\simeq C_{25}\times C_5^3\),
\((V_L,V_M,V_N;E)=(3,5*,9*;2)\) for \(D=281\) prime \\ (type \(\alpha_1\)), \\
(3) \(\mathrm{Cl}_5(N)\simeq C_{25}\times C_5^5\), \((V_L,V_M,V_N;E)=(2,3,7*;4)\) for \(D=465=3\cdot 5\cdot 31\) (type \(\beta_2\)), \\
(4) \(\mathrm{Cl}_5(N)\simeq C_{25}\times C_5^5\), \((V_L,V_M,V_N;E)=(2,3,7*;4)\) for \(D=473=11\cdot 43\) (type \(\beta_2\)), \\
(5) \(\mathrm{Cl}_5(N)\simeq C_{25}\times C_5^6\), \(\mathrm{Cl}_5(M)\simeq C_{25}\times C_5^2\), \(\mathrm{Cl}_5(L)\simeq C_{25}\),
\((V_L,V_M,V_N;E)=(2*,4*,8*;5)\) \\ for \(D=502=2\cdot 251\) (type \(\beta_1\)), \\
(6) \(\mathrm{Cl}_5(N)\simeq C_{25}\times C_5^5\), \((V_L,V_M,V_N;E)=(2,3,7*;4)\) for \(D=590=2\cdot 5\cdot 59\) (type \(\beta_2\)), \\
(7) \(\mathrm{Cl}_5(N)\simeq C_{25}^2\times C_5^4\), \(\mathrm{Cl}_5(M)\simeq C_{25}\times C_5^2\),
\((V_L,V_M,V_N;E)=(2,4*,8*;5)\) for \(D=620=2^2\cdot 5\cdot 31\) \\ (type \(\beta_1\)), \\
(8) \(\mathrm{Cl}_5(N)\simeq C_{25}\times C_5^4\), \(\mathrm{Cl}_5(M)\simeq C_{25}\times C_5\), \(\mathrm{Cl}_5(L)\simeq C_{25}\),
\((V_L,V_M,V_N;E)=(2*,3*,6*;3)\) \\ for \(D=955=5\cdot 191\) (type \(\alpha_2\)).

However, outside the range of systematic computations, we additionally found: \\
(a) \(\mathrm{Cl}_5(N)\simeq C_{25}^3\times C_5\), \(\mathrm{Cl}_5(M)\simeq C_{25}\times C_5\), \(\mathrm{Cl}_5(L)\simeq C_{25}\),
\((V_L,V_M,V_N;E)=(2*,3*,7*;4)\) \\ for \(D=1049\) prime (type \(\zeta_2\)), \\
(b) \(\mathrm{Cl}_5(N)\simeq C_{25}^2\times C_5^6\), \(\mathrm{Cl}_5(M)\simeq C_{25}\times C_5^3\),
\((V_L,V_M,V_N;E)=(3,5*,10*;3)\) for \(D=3001\) prime \\ (type \(\alpha_2\)), \\
(c) \(\mathrm{Cl}_5(N)\simeq C_{25}^5\times C_5^4\), \(\mathrm{Cl}_5(M)\simeq C_{25}^2\times C_5^3\), \(\mathrm{Cl}_5(L)\simeq C_{25}\times C_5^2\),
\((V_L,V_M,V_N;E)=(4*,7*,14*;3)\) \\ for \(D=3251\) prime (type \(\delta_2\)), \\
(d) \(\mathrm{Cl}_5(N)\simeq C_{25}^2\times C_5^2\), \(\mathrm{Cl}_5(M)\simeq C_{25}\times C_5\), \(\mathrm{Cl}_5(L)\simeq C_{25}\),
\((V_L,V_M,V_N;E)=(2*,3*,6*;3)\) \\ for \(D=5849\) prime (type \(\delta_2\)).

We point out that in all of the last four examples, the normal field \(N\) is a Polya field,
since the radicands \(D\) are primes \(\ell\equiv\pm 1\,(\mathrm{mod}\,25)\), the conductors are primes \(f=\ell\),
and thus all primitive ambiguous ideals are principal, generated by the radical \(\delta=\sqrt[5]{D}\) and its powers.
Consequently, there seems to be no upper bound for the complexity of \(5\)-class groups \(\mathrm{Cl}_5(N)\)
of pure metacyclic Polya fields \(N\) in Theorem
\ref{thm:Polya1or24mod25}.


\subsection{Refinement of DPF types by similarity classes}
\label{ss:Refinement}
\noindent
Based on the definition of similarity classes and prototypes in \S\
\ref{ss:Prototypes},
on the explicit listing of all prototypes in the range between \(2\) and \(10^3\) in the Tables
\ref{tbl:Prototypes1}
---
\ref{tbl:Prototypes4},
and on theoretical foundations in \S\
\ref{ss:Theorems},
we are now in the position to establish
the intended refinement of our \(13\) differential principal factorization types
into similarity classes in the Tables
\ref{tbl:Alpha1}
---
\ref{tbl:Theta},
as far as the range of our computations for normalized radicands \(2\le D<10^3\)
is concerned.
The cardinalities \(\mathbf{\lvert M\rvert}\) refine the statistical evaluation in Table
\ref{tbl:Statistics}.

DPF types are characterized by the multiplet \((U,\eta,\zeta;A,I,R)\),
refined Dedekind species, S, by the multiplet \((e_0;t,u,v,m;n,s_2,s_4)\),
and \(5\)-class groups by the multiplet \((V_L,V_M,V_N;E)\).


\renewcommand{\arraystretch}{1.0}

\begin{table}[ht]
\caption{Splitting of type \(\alpha_1\), \((U,\eta,\zeta;A,I,R)=(2,-,-;1,0,2)\): \(5\) similarity classes}
\label{tbl:Alpha1}
\begin{center}
\begin{tabular}{|r|cc|cccr|ccc|cccc|rr|}
\hline
 No. & S  & \(e_0\) & \(t\) & \(u\) & \(v\) & \(m\) & \(n\) & \(s_2\) & \(s_4\) & \(V_L\) & \(V_M\) & \(V_N\) & \(E\) &   \(\mathbf{M}\) & \(\mathbf{\lvert M\rvert}\) \\
\hline
   1 & 1b &   \(2\) & \(1\) & \(0\) & \(1\) & \(1\) & \(0\) &   \(0\) &   \(1\) &   \(2\) &   \(3\) &   \(5\) & \(2\) &  \(\mathbf{31}\) &                       \(2\) \\
   2 & 1a &   \(6\) & \(1\) & \(0\) & \(1\) & \(4\) & \(0\) &   \(0\) &   \(1\) &   \(2\) &   \(3\) &   \(5\) & \(2\) & \(\mathbf{155}\) &                       \(2\) \\
   3 & 1b &   \(2\) & \(1\) & \(0\) & \(1\) & \(1\) & \(0\) &   \(0\) &   \(1\) &   \(3\) &  \(5*\) &  \(9*\) & \(2\) & \(\mathbf{281}\) &                       \(1\) \\
   4 & 1b &   \(2\) & \(2\) & \(0\) & \(2\) & \(3\) & \(0\) &   \(0\) &   \(2\) &   \(3\) &   \(5\) &   \(9\) & \(2\) & \(\mathbf{341}\) &                       \(2\) \\
   5 & 2  &   \(0\) & \(1\) & \(1'\)& \(0\) & \(1\) & \(0\) &   \(0\) &   \(1'\)&   \(2\) &   \(3\) &   \(5\) & \(2\) & \(\mathbf{401}\) &                       \(2\) \\
\hline
\end{tabular}
\end{center}
\end{table}

DPF type \(\alpha_1\) splits into
\(3\) similarity classes in the ground state \((V_L,V_M,V_N)=(2,3,5)\) and
\(2\) similarity classes in the first excited state \((V_L,V_M,V_N)=(3,5,9)\).
Summing up the partial frequencies \(6+3\) of these states in Table
\ref{tbl:Alpha1}
yields the modest absolute frequency \(9\) of type \(\alpha_1\) in the range \(2\le D<10^3\),
as given in Table
\ref{tbl:Statistics}.
The logarithmic subfield unit index of type \(\alpha_1\)
is restricted to the single value \(E=2\).
Type \(\alpha_1\) is the unique type with
\(2\)-dimensional relative principal factorization, \(R=2\).


\renewcommand{\arraystretch}{1.0}

\begin{table}[ht]
\caption{Splitting of type \(\alpha_2\), \((U,\eta,\zeta;A,I,R)=(2,-,-;1,1,1)\): \(22\) similarity classes}
\label{tbl:Alpha2}
\begin{center}
\begin{tabular}{|r|cc|cccr|ccc|cccc|rr|}
\hline
 No. & S  & \(e_0\) & \(t\) & \(u\) & \(v\) &  \(m\) & \(n\) & \(s_2\) & \(s_4\) & \(V_L\) & \(V_M\) & \(V_N\) & \(E\) &   \(\mathbf{M}\) & \(\mathbf{\lvert M\rvert}\) \\
\hline
   1 & 1b &   \(2\) & \(1\) & \(0\) & \(1\) &  \(1\) & \(0\) &   \(0\) &   \(1\) &   \(1\) &   \(1\) &   \(2\) & \(3\) &  \(\mathbf{11}\) &                      \(14\) \\
   2 & 1b &   \(2\) & \(2\) & \(0\) & \(2\) &  \(3\) & \(1\) &   \(0\) &   \(1\) &   \(2\) &   \(2\) &   \(4\) & \(1\) &  \(\mathbf{33}\) &                       \(8\) \\
   3 & 1a &   \(6\) & \(1\) & \(0\) & \(1\) &  \(4\) & \(0\) &   \(0\) &   \(1\) &   \(1\) &   \(1\) &   \(2\) & \(3\) &  \(\mathbf{55}\) &                       \(6\) \\
   4 & 1b &   \(2\) & \(2\) & \(0\) & \(2\) &  \(3\) & \(1\) &   \(0\) &   \(1\) &   \(1\) &   \(1\) &   \(2\) & \(3\) &  \(\mathbf{82}\) &                      \(15\) \\
   5 & 1b &   \(2\) & \(2\) & \(0\) & \(2\) &  \(3\) & \(1\) &   \(0\) &   \(1\) &   \(2\) &   \(3\) &   \(6\) & \(3\) & \(\mathbf{123}\) &                       \(2\) \\
   6 & 1b &   \(2\) & \(1\) & \(0\) & \(1\) &  \(1\) & \(0\) &   \(0\) &   \(1\) &   \(2\) &   \(2\) &   \(4\) & \(1\) & \(\mathbf{131}\) &                       \(6\) \\
   7 & 2  &   \(0\) & \(1\) & \(1'\)& \(0\) &  \(1\) & \(0\) &   \(0\) &   \(1'\)&   \(1\) &   \(1\) &   \(2\) & \(3\) & \(\mathbf{151}\) &                       \(3\) \\
   8 & 1a &   \(6\) & \(1\) & \(0\) & \(1\) &  \(4\) & \(0\) &   \(0\) &   \(1\) &   \(2\) &   \(2\) &   \(4\) & \(1\) & \(\mathbf{275}\) &                       \(2\) \\
   9 & 1b &   \(2\) & \(2\) & \(1\) & \(1\) &  \(4\) & \(1\) &   \(0\) &   \(1\) &   \(2\) &   \(2\) &   \(4\) & \(1\) & \(\mathbf{287}\) &                       \(1\) \\
  10 & 2  &   \(0\) & \(3\) & \(0\) & \(3\) &  \(3\) & \(1\) &   \(1\) &   \(1\) &   \(2\) &   \(3\) &   \(6\) & \(3\) & \(\mathbf{418}\) &                       \(1\) \\
  11 & 2  &   \(0\) & \(2\) & \(0\) & \(2\) &  \(1\) & \(0\) &   \(0\) &   \(2\) &   \(2\) &   \(3\) &   \(6\) & \(3\) & \(\mathbf{451}\) &                       \(1\) \\
  12 & 1a &   \(6\) & \(2\) & \(0\) & \(2\) & \(16\) & \(1\) &   \(0\) &   \(1\) &   \(2\) &   \(3\) &   \(6\) & \(3\) & \(\mathbf{550}\) &                       \(1\) \\
  13 & 1b &   \(2\) & \(2\) & \(0\) & \(2\) &  \(3\) & \(0\) &   \(0\) &   \(2\) &   \(3\) &   \(4\) &   \(8\) & \(1\) & \(\mathbf{671}\) &                       \(1\) \\
  14 & 2  &   \(0\) & \(3\) & \(0\) & \(3\) &  \(3\) & \(1\) &   \(0\) &   \(2\) &   \(2\) &   \(3\) &   \(6\) & \(3\) & \(\mathbf{682}\) &                       \(1\) \\
  15 & 1b &   \(2\) & \(1\) & \(0\) & \(1\) &  \(1\) & \(0\) &   \(0\) &   \(1\) &   \(3\) &   \(4\) &   \(8\) & \(1\) & \(\mathbf{691}\) &                       \(1\) \\
  16 & 1a &   \(6\) & \(2\) & \(0\) & \(2\) & \(16\) & \(1\) &   \(0\) &   \(1\) &   \(2\) &   \(2\) &   \(4\) & \(1\) & \(\mathbf{710}\) &                       \(4\) \\
  17 & 2  &   \(0\) & \(1\) & \(1'\)& \(0\) &  \(1\) & \(0\) &   \(0\) &   \(1'\)&   \(2\) &   \(2\) &   \(4\) & \(1\) & \(\mathbf{751}\) &                       \(1\) \\
  18 & 1b &   \(2\) & \(2\) & \(0\) & \(2\) &  \(3\) & \(0\) &   \(1\) &   \(1\) &   \(3\) &   \(4\) &   \(8\) & \(1\) & \(\mathbf{779}\) &                       \(1\) \\
  19 & 1b &   \(2\) & \(2\) & \(1'\)& \(1\) &  \(4\) & \(1\) &   \(0\) &   \(1'\)&   \(2\) &   \(2\) &   \(4\) & \(1\) & \(\mathbf{808}\) &                       \(2\) \\
  20 & 1b &   \(2\) & \(3\) & \(1\) & \(2\) & \(12\) & \(2\) &   \(0\) &   \(1\) &   \(3\) &   \(4\) &   \(8\) & \(1\) & \(\mathbf{861}\) &                       \(1\) \\
  21 & 1a &   \(6\) & \(1\) & \(0\) & \(1\) &  \(4\) & \(0\) &   \(0\) &   \(1\) &  \(2*\) &  \(3*\) &  \(6*\) & \(3\) & \(\mathbf{955}\) &                       \(1\) \\
  22 & 2  &   \(0\) & \(2\) & \(0\) & \(2\) &  \(1\) & \(1\) &   \(0\) &   \(1\) &   \(1\) &   \(1\) &   \(2\) & \(3\) & \(\mathbf{982}\) &                       \(2\) \\
\hline
\end{tabular}
\end{center}
\end{table}

The logarithmic subfield unit index of DPF type \(\alpha_2\)
can take two values, either \(E=3\) or \(E=1\).
Type \(\alpha_2\) with \(E=3\) splits into
\(5\) similarity classes in the ground state \((V_L,V_M,V_N)=(1,1,2)\) and
\(6\) similarity classes in the first excited state \((V_L,V_M,V_N)=(2,3,6)\).
Type \(\alpha_2\) with \(E=1\) splits into
\(7\) similarity classes in the ground state \((V_L,V_M,V_N)=(2,2,4)\) and
\(4\) similarity classes in the first excited state \((V_L,V_M,V_N)=(3,4,8)\).
Summing up the partial frequencies \(40+7\), resp. \(24+4\), of these states in Table
\ref{tbl:Alpha2}
yields the considerable absolute frequency \(75\) of type \(\alpha_2\) in the range \(2\le D<10^3\),
as given in Table
\ref{tbl:Statistics}.
Type \(\alpha_2\) is the unique type with
mixed intermediate and relative principal factorization, \(I=R=1\).


\renewcommand{\arraystretch}{1.0}

\begin{table}[ht]
\caption{Splitting of type \(\alpha_3\), \((U,\eta,\zeta;A,I,R)=(2,-,-;1,2,0)\): \(5\) similarity classes}
\label{tbl:Alpha3}
\begin{center}
\begin{tabular}{|r|cc|cccr|ccc|cccc|rr|}
\hline
 No. & S  & \(e_0\) & \(t\) & \(u\) & \(v\) &  \(m\) & \(n\) & \(s_2\) & \(s_4\) & \(V_L\) & \(V_M\) & \(V_N\) & \(E\) &   \(\mathbf{M}\) & \(\mathbf{\lvert M\rvert}\) \\
\hline
   1 & 1b &   \(2\) & \(2\) & \(0\) & \(2\) &  \(3\) & \(0\) &   \(1\) &   \(1\) &   \(2\) &   \(2\) &   \(5\) & \(2\) & \(\mathbf{319}\) &                       \(3\) \\
   2 & 2  &   \(0\) & \(2\) & \(0\) & \(2\) &  \(1\) & \(0\) &   \(2\) &   \(0\) &   \(2\) &   \(2\) &   \(5\) & \(2\) & \(\mathbf{551}\) &                       \(1\) \\
   3 & 1b &   \(2\) & \(3\) & \(0\) & \(3\) & \(13\) & \(1\) &   \(1\) &   \(1\) &   \(3\) &   \(4\) &   \(9\) & \(2\) & \(\mathbf{627}\) &                       \(1\) \\
   4 & 2  &   \(0\) & \(2\) & \(0\) & \(2\) &  \(1\) & \(0\) &   \(1\) &   \(1\) &   \(2\) &   \(2\) &   \(5\) & \(2\) & \(\mathbf{649}\) &                       \(2\) \\
   5 & 2  &   \(0\) & \(3\) & \(0\) & \(3\) &  \(3\) & \(1\) &   \(1\) &   \(1\) &   \(2\) &   \(2\) &   \(5\) & \(2\) & \(\mathbf{957}\) &                       \(1\) \\
\hline
\end{tabular}
\end{center}
\end{table}

DPF type \(\alpha_3\) splits into
\(4\) similarity classes in the ground state \((V_L,V_M,V_N)=(2,2,5)\) and
\(1\) similarity class in the first excited state \((V_L,V_M,V_N)=(3,4,9)\).
Summing up the partial frequencies \(7+1\) of these states in Table
\ref{tbl:Alpha3}
yields the modest absolute frequency \(8\) of type \(\alpha_3\) in the range \(2\le D<10^3\),
as given in Table
\ref{tbl:Statistics}.
The logarithmic subfield unit index of type \(\alpha_3\)
is restricted to the unique value \(E=2\).
Type \(\alpha_3\) is the unique type with
\(2\)-dimensional intermediate principal factorization, \(I=2\).


\renewcommand{\arraystretch}{1.0}

\begin{table}[ht]
\caption{Splitting of type \(\beta_1\), \((U,\eta,\zeta;A,I,R)=(2,-,-;2,0,1)\): \(10\) similarity classes}
\label{tbl:Beta1}
\begin{center}
\begin{tabular}{|r|cc|cccr|ccc|cccc|rr|}
\hline
 No. & S  & \(e_0\) & \(t\) & \(u\) & \(v\) &  \(m\) & \(n\) & \(s_2\) & \(s_4\) & \(V_L\) & \(V_M\) & \(V_N\) & \(E\) &   \(\mathbf{M}\) & \(\mathbf{\lvert M\rvert}\) \\
\hline
   1 & 1b &   \(2\) & \(3\) & \(0\) & \(3\) & \(13\) & \(2\) &   \(0\) &   \(1\) &   \(2\) &   \(3\) &   \(6\) & \(3\) & \(\mathbf{186}\) &                       \(7\) \\
   2 & 1b &   \(2\) & \(1\) & \(0\) & \(1\) &  \(1\) & \(0\) &   \(0\) &   \(1\) &   \(1\) &   \(2\) &   \(4\) & \(5\) & \(\mathbf{191}\) &                       \(3\) \\
   3 & 1b &   \(2\) & \(2\) & \(0\) & \(2\) &  \(3\) & \(1\) &   \(0\) &   \(1\) &   \(1\) &   \(2\) &   \(4\) & \(5\) & \(\mathbf{253}\) &                       \(4\) \\
   4 & 1b &   \(2\) & \(2\) & \(1'\)& \(1\) &  \(4\) & \(1\) &   \(0\) &   \(1'\)&   \(1\) &   \(2\) &   \(4\) & \(5\) & \(\mathbf{302}\) &                       \(2\) \\
   5 & 2  &   \(0\) & \(2\) & \(0\) & \(2\) &  \(1\) & \(1\) &   \(0\) &   \(1\) &   \(1\) &   \(2\) &   \(4\) & \(5\) & \(\mathbf{482}\) &                       \(2\) \\
   6 & 1b &   \(2\) & \(2\) & \(1'\)& \(1\) &  \(4\) & \(1\) &   \(0\) &   \(1'\)&  \(2*\) &  \(4*\) &  \(8*\) & \(5\) & \(\mathbf{502}\) &                       \(1\) \\
   7 & 1b &   \(2\) & \(2\) & \(0\) & \(2\) &  \(3\) & \(1\) &   \(0\) &   \(1\) &   \(2\) &   \(3\) &   \(6\) & \(3\) & \(\mathbf{517}\) &                       \(1\) \\
   8 & 1a &   \(6\) & \(2\) & \(0\) & \(2\) & \(16\) & \(1\) &   \(0\) &   \(1\) &   \(2\) &  \(4*\) &  \(8*\) & \(5\) & \(\mathbf{620}\) &                       \(1\) \\
   9 & 2  &   \(0\) & \(3\) & \(1\) & \(2\) &  \(4\) & \(2\) &   \(0\) &   \(1\) &   \(2\) &   \(3\) &   \(6\) & \(3\) & \(\mathbf{693}\) &                       \(1\) \\
  10 & 1a &   \(6\) & \(2\) & \(0\) & \(2\) & \(16\) & \(1\) &   \(0\) &   \(1\) &   \(1\) &   \(2\) &   \(4\) & \(5\) & \(\mathbf{825}\) &                       \(1\) \\
\hline
\end{tabular}
\end{center}
\end{table}

The logarithmic subfield unit index of DPF type \(\beta_1\)
can take two values, either \(E=3\) or \(E=5\).
Type \(\beta_1\) with \(E=3\) consists of
\(3\) similarity classes in the ground state \((V_L,V_M,V_N)=(2,3,6)\).
Type \(\beta_1\) with \(E=5\) splits into
\(5\) similarity classes in the ground state \((V_L,V_M,V_N)=(1,2,4)\) and
\(2\) similarity classes in the first excited state \((V_L,V_M,V_N)=(2,4,8)\).
Summing up the partial frequencies \(9\), resp. \(12+2\), of these states in Table
\ref{tbl:Beta1}
yields the modest absolute frequency \(23\) of type \(\beta_1\) in the range \(2\le D<10^3\),
as given in Table
\ref{tbl:Statistics}.
Type \(\beta_1\) is the unique type with
mixed absolute and relative principal factorization, \(A=2\) and \(R=1\).


\renewcommand{\arraystretch}{1.0}

\begin{table}[ht]
\caption{Splitting of type \(\beta_2\), \((U,\eta,\zeta;A,I,R)=(2,-,-;2,1,0)\): \(25\) similarity classes}
\label{tbl:Beta2}
\begin{center}
\begin{tabular}{|r|cc|cccr|ccc|cccc|rr|}
\hline
 No. & S  & \(e_0\) & \(t\) & \(u\) & \(v\) &  \(m\) & \(n\) & \(s_2\) & \(s_4\) & \(V_L\) & \(V_M\) & \(V_N\) & \(E\) &    \(\mathbf{M}\) & \(\mathbf{\lvert M\rvert}\) \\
\hline
   1 & 1b &   \(2\) & \(2\) & \(0\) & \(2\) &  \(3\) & \(1\) &   \(0\) &   \(1\) &   \(1\) &   \(1\) &   \(3\) & \(4\) &   \(\mathbf{22}\) &                      \(35\) \\
   2 & 1b &   \(2\) & \(2\) & \(0\) & \(2\) &  \(3\) & \(1\) &   \(1\) &   \(0\) &   \(1\) &   \(1\) &   \(3\) & \(4\) &   \(\mathbf{38}\) &                      \(44\) \\
   3 & 1b &   \(2\) & \(2\) & \(1\) & \(1\) &  \(4\) & \(1\) &   \(0\) &   \(1\) &   \(1\) &   \(1\) &   \(3\) & \(4\) &   \(\mathbf{77}\) &                       \(7\) \\
   4 & 1a &   \(6\) & \(2\) & \(0\) & \(2\) & \(16\) & \(1\) &   \(0\) &   \(1\) &   \(1\) &   \(1\) &   \(3\) & \(4\) &  \(\mathbf{110}\) &                      \(11\) \\
   5 & 2  &   \(0\) & \(3\) & \(0\) & \(3\) &  \(3\) & \(2\) &   \(0\) &   \(1\) &   \(1\) &   \(1\) &   \(3\) & \(4\) &  \(\mathbf{132}\) &                       \(5\) \\
   6 & 1b &   \(2\) & \(2\) & \(1\) & \(1\) &  \(4\) & \(1\) &   \(1\) &   \(0\) &   \(1\) &   \(1\) &   \(3\) & \(4\) &  \(\mathbf{133}\) &                       \(7\) \\
   7 & 1b &   \(2\) & \(1\) & \(0\) & \(1\) &  \(1\) & \(0\) &   \(1\) &   \(0\) &   \(1\) &   \(1\) &   \(3\) & \(4\) &  \(\mathbf{139}\) &                       \(4\) \\
   8 & 1b &   \(2\) & \(3\) & \(1\) & \(2\) & \(12\) & \(2\) &   \(0\) &   \(1\) &   \(2\) &   \(3\) &   \(7\) & \(4\) &  \(\mathbf{154}\) &                       \(3\) \\
   9 & 2  &   \(0\) & \(3\) & \(0\) & \(3\) &  \(3\) & \(2\) &   \(1\) &   \(0\) &   \(1\) &   \(1\) &   \(3\) & \(4\) &  \(\mathbf{174}\) &                       \(3\) \\
  10 & 1a &   \(6\) & \(2\) & \(0\) & \(2\) & \(16\) & \(1\) &   \(1\) &   \(0\) &   \(1\) &   \(1\) &   \(3\) & \(4\) &  \(\mathbf{190}\) &                       \(9\) \\
  11 & 1b &   \(2\) & \(2\) & \(1'\)& \(1\) &  \(4\) & \(1\) &   \(0\) &   \(1'\)&   \(1\) &   \(1\) &   \(3\) & \(4\) &  \(\mathbf{202}\) &                       \(6\) \\
  12 & 1b &   \(2\) & \(2\) & \(0\) & \(2\) &  \(3\) & \(0\) &   \(1\) &   \(1\) &   \(2\) &   \(3\) &   \(7\) & \(4\) &  \(\mathbf{209}\) &                       \(2\) \\
  13 & 1b &   \(2\) & \(3\) & \(0\) & \(3\) & \(13\) & \(2\) &   \(0\) &   \(1\) &   \(2\) &   \(3\) &   \(7\) & \(4\) &  \(\mathbf{286}\) &                       \(3\) \\
  14 & 1a &   \(6\) & \(2\) & \(1\) & \(1\) & \(16\) & \(1\) &   \(0\) &   \(1\) &   \(1\) &   \(1\) &   \(3\) & \(4\) &  \(\mathbf{385}\) &                       \(1\) \\
  15 & 1b &   \(2\) & \(2\) & \(1'\)& \(1\) &  \(4\) & \(1\) &   \(1'\)&   \(0\) &   \(1\) &   \(1\) &   \(3\) & \(4\) &  \(\mathbf{398}\) &                       \(7\) \\
  16 & 2  &   \(0\) & \(3\) & \(1\) & \(2\) &  \(4\) & \(2\) &   \(1\) &   \(0\) &   \(1\) &   \(1\) &   \(3\) & \(4\) &  \(\mathbf{399}\) &                       \(2\) \\
  17 & 1a &   \(6\) & \(2\) & \(0\) & \(2\) & \(16\) & \(1\) &   \(0\) &   \(1\) &   \(2\) &   \(3\) &  \(7*\) & \(4\) &  \(\mathbf{465}\) &                       \(1\) \\
  18 & 1b &   \(2\) & \(2\) & \(1\) & \(1\) &  \(4\) & \(1\) &   \(0\) &   \(1\) &   \(2\) &   \(3\) &  \(7*\) & \(4\) &  \(\mathbf{473}\) &                       \(1\) \\
  19 & 2  &   \(0\) & \(3\) & \(1\) & \(2\) &  \(4\) & \(2\) &   \(0\) &   \(1\) &   \(1\) &   \(1\) &   \(3\) & \(4\) &  \(\mathbf{574}\) &                       \(3\) \\
  20 & 1a &   \(6\) & \(2\) & \(0\) & \(2\) & \(16\) & \(1\) &   \(1\) &   \(0\) &   \(2\) &   \(3\) &  \(7*\) & \(4\) &  \(\mathbf{590}\) &                       \(1\) \\
  21 & 1b &   \(2\) & \(3\) & \(1\) & \(2\) & \(12\) & \(2\) &   \(1\) &   \(0\) &   \(2\) &   \(3\) &   \(7\) & \(4\) &  \(\mathbf{609}\) &                       \(1\) \\
  22 & 1b &   \(2\) & \(3\) & \(0\) & \(3\) & \(13\) & \(1\) &   \(1\) &   \(1\) &   \(2\) &   \(3\) &   \(7\) & \(4\) &  \(\mathbf{638}\) &                       \(2\) \\
  23 & 1a &   \(6\) & \(2\) & \(1\) & \(1\) & \(16\) & \(1\) &   \(1\) &   \(0\) &   \(1\) &   \(1\) &   \(3\) & \(4\) &  \(\mathbf{665}\) &                       \(1\) \\
  24 & 2  &   \(0\) & \(2\) & \(0\) & \(2\) &  \(1\) & \(1\) &   \(1\) &   \(0\) &   \(1\) &   \(1\) &   \(3\) & \(4\) &  \(\mathbf{893}\) &                       \(1\) \\
  25 & 1b &   \(2\) & \(3\) & \(0\) & \(3\) & \(13\) & \(1\) &   \(0\) &   \(2\) &   \(2\) &   \(3\) &   \(7\) & \(4\) &  \(\mathbf{902}\) &                       \(1\) \\
\hline
\end{tabular}
\end{center}
\end{table}

DPF type \(\beta_2\) splits into
\(16\) similarity classes in the ground state \((V_L,V_M,V_N)=(1,1,3)\) and
\(9\) similarity classes in the first excited state \((V_L,V_M,V_N)=(2,3,7)\).
Summing up the partial frequencies \(146+15\) of these states in Table
\ref{tbl:Beta2}
yields the high absolute frequency \(161\) of type \(\beta_2\) in the range \(2\le D<10^3\),
as given in Table
\ref{tbl:Statistics}.
The logarithmic subfield unit index of type \(\beta_2\)
is restricted to the unique value \(E=4\).
Type \(\beta_2\) is the unique type with
mixed absolute and intermediate principal factorization, \(A=2\) and \(I=1\).


\renewcommand{\arraystretch}{1.0}

\begin{table}[ht]
\caption{Splitting of type \(\gamma\), \((U,\eta,\zeta;A,I,R)=(2,-,-;3,0,0)\): \(29\) similarity classes}
\label{tbl:Gamma}
\begin{center}
\begin{tabular}{|r|cc|cccr|ccc|cccc|rr|}
\hline
 No. & S  & \(e_0\) & \(t\) & \(u\) & \(v\) &  \(m\) & \(n\) & \(s_2\) & \(s_4\) & \(V_L\) & \(V_M\) & \(V_N\) & \(E\) &   \(\mathbf{M}\) & \(\mathbf{\lvert M\rvert}\) \\
\hline
   1 & 1b &   \(2\) & \(2\) & \(0\) & \(2\) &  \(3\) & \(2\) &   \(0\) &   \(0\) &   \(0\) &   \(0\) &   \(1\) & \(6\) &   \(\mathbf{6}\) &                      \(77\) \\
   2 & 1b &   \(2\) & \(2\) & \(1\) & \(1\) &  \(4\) & \(2\) &   \(0\) &   \(0\) &   \(0\) &   \(0\) &   \(1\) & \(6\) &  \(\mathbf{14}\) &                      \(44\) \\
   3 & 1a &   \(6\) & \(2\) & \(0\) & \(2\) & \(16\) & \(2\) &   \(0\) &   \(0\) &   \(0\) &   \(0\) &   \(1\) & \(6\) &  \(\mathbf{30}\) &                      \(37\) \\
   4 & 1b &   \(2\) & \(3\) & \(1\) & \(2\) & \(12\) & \(3\) &   \(0\) &   \(0\) &   \(1\) &   \(2\) &   \(5\) & \(6\) &  \(\mathbf{42}\) &                      \(22\) \\
   5 & 1b &   \(2\) & \(3\) & \(0\) & \(3\) & \(13\) & \(2\) &   \(0\) &   \(1\) &   \(1\) &   \(2\) &   \(5\) & \(6\) &  \(\mathbf{66}\) &                      \(17\) \\
   6 & 1a &   \(6\) & \(2\) & \(1\) & \(1\) & \(16\) & \(2\) &   \(0\) &   \(0\) &   \(0\) &   \(0\) &   \(1\) & \(6\) &  \(\mathbf{70}\) &                      \(14\) \\
   7 & 1b &   \(2\) & \(3\) & \(0\) & \(3\) & \(13\) & \(3\) &   \(0\) &   \(0\) &   \(1\) &   \(2\) &   \(5\) & \(6\) &  \(\mathbf{78}\) &                      \(37\) \\
   8 & 1b &   \(2\) & \(3\) & \(0\) & \(3\) & \(13\) & \(2\) &   \(1\) &   \(0\) &   \(1\) &   \(2\) &   \(5\) & \(6\) & \(\mathbf{114}\) &                      \(20\) \\
   9 & 2  &   \(0\) & \(3\) & \(1\) & \(2\) &  \(4\) & \(3\) &   \(0\) &   \(0\) &   \(0\) &   \(0\) &   \(1\) & \(6\) & \(\mathbf{126}\) &                       \(6\) \\
  10 & 1a &   \(6\) & \(3\) & \(1\) & \(2\) & \(64\) & \(3\) &   \(0\) &   \(0\) &   \(1\) &   \(2\) &   \(5\) & \(6\) & \(\mathbf{210}\) &                       \(5\) \\
  11 & 1b &   \(2\) & \(3\) & \(1\) & \(2\) & \(12\) & \(2\) &   \(0\) &   \(1\) &   \(1\) &   \(2\) &   \(5\) & \(6\) & \(\mathbf{231}\) &                       \(5\) \\
  12 & 1b &   \(2\) & \(2\) & \(0\) & \(2\) &  \(3\) & \(1\) &   \(1\) &   \(0\) &   \(1\) &   \(2\) &   \(5\) & \(6\) & \(\mathbf{247}\) &                       \(2\) \\
  13 & 1b &   \(2\) & \(2\) & \(1\) & \(1\) &  \(4\) & \(2\) &   \(0\) &   \(0\) &   \(1\) &   \(2\) &   \(5\) & \(6\) & \(\mathbf{259}\) &                       \(1\) \\
  14 & 1b &   \(2\) & \(3\) & \(1\) & \(2\) & \(12\) & \(2\) &   \(1\) &   \(0\) &   \(1\) &   \(2\) &   \(5\) & \(6\) & \(\mathbf{266}\) &                       \(3\) \\
  15 & 2  &   \(0\) & \(3\) & \(0\) & \(3\) &  \(3\) & \(3\) &   \(0\) &   \(0\) &   \(0\) &   \(0\) &   \(1\) & \(6\) & \(\mathbf{276}\) &                      \(10\) \\
  16 & 1a &   \(6\) & \(2\) & \(0\) & \(2\) & \(16\) & \(1\) &   \(1\) &   \(0\) &   \(1\) &   \(2\) &   \(5\) & \(6\) & \(\mathbf{285}\) &                       \(1\) \\
  17 & 1a &   \(6\) & \(3\) & \(0\) & \(3\) & \(64\) & \(2\) &   \(0\) &   \(1\) &   \(2\) &   \(4\) &   \(9\) & \(6\) & \(\mathbf{330}\) &                       \(2\) \\
  18 & 1a &   \(6\) & \(3\) & \(0\) & \(3\) & \(64\) & \(3\) &   \(0\) &   \(0\) &   \(1\) &   \(2\) &   \(5\) & \(6\) & \(\mathbf{390}\) &                       \(4\) \\
  19 & 1b &   \(2\) & \(4\) & \(1\) & \(3\) & \(52\) & \(3\) &   \(0\) &   \(1\) &   \(2\) &   \(4\) &   \(9\) & \(6\) & \(\mathbf{462}\) &                       \(1\) \\
  20 & 1b &   \(2\) & \(4\) & \(1\) & \(3\) & \(52\) & \(4\) &   \(0\) &   \(0\) &   \(2\) &   \(4\) &   \(9\) & \(6\) & \(\mathbf{546}\) &                       \(3\) \\
  21 & 1a &   \(6\) & \(3\) & \(0\) & \(3\) & \(64\) & \(2\) &   \(1\) &   \(0\) &   \(1\) &   \(2\) &   \(5\) & \(6\) & \(\mathbf{570}\) &                       \(2\) \\
  22 & 1b &   \(2\) & \(3\) & \(2\) & \(1\) & \(16\) & \(3\) &   \(0\) &   \(0\) &   \(1\) &   \(2\) &   \(5\) & \(6\) & \(\mathbf{602}\) &                       \(2\) \\
  23 & 1b &   \(2\) & \(3\) & \(1'\)& \(2\) & \(12\) & \(2\) &   \(0\) &   \(1'\)&   \(1\) &   \(2\) &   \(5\) & \(6\) & \(\mathbf{606}\) &                       \(2\) \\
  24 & 1a &   \(6\) & \(3\) & \(0\) & \(3\) & \(64\) & \(2\) &   \(0\) &   \(1\) &   \(1\) &   \(2\) &   \(5\) & \(6\) & \(\mathbf{660}\) &                       \(2\) \\
  25 & 1a &   \(6\) & \(3\) & \(1\) & \(2\) & \(64\) & \(2\) &   \(0\) &   \(1\) &   \(1\) &   \(2\) &   \(5\) & \(6\) & \(\mathbf{770}\) &                       \(1\) \\
  26 & 1b &   \(2\) & \(4\) & \(1\) & \(3\) & \(52\) & \(3\) &   \(1\) &   \(0\) &   \(2\) &   \(4\) &   \(9\) & \(6\) & \(\mathbf{798}\) &                       \(1\) \\
  27 & 1b &   \(2\) & \(4\) & \(0\) & \(4\) & \(51\) & \(3\) &   \(0\) &   \(1\) &   \(2\) &   \(4\) &   \(9\) & \(6\) & \(\mathbf{858}\) &                       \(1\) \\
  28 & 1b &   \(2\) & \(3\) & \(1'\)& \(2\) & \(12\) & \(2\) &   \(1'\)&   \(0\) &   \(1\) &   \(2\) &   \(5\) & \(6\) & \(\mathbf{894}\) &                       \(1\) \\
  29 & 2  &   \(0\) & \(4\) & \(1\) & \(3\) & \(12\) & \(3\) &   \(0\) &   \(1\) &   \(1\) &   \(2\) &   \(5\) & \(6\) & \(\mathbf{924}\) &                       \(1\) \\
\hline
\end{tabular}
\end{center}
\end{table}

DPF type \(\gamma\) splits into
\(6\) similarity classes in the ground state \((V_L,V_M,V_N)=(0,0,1)\),
\(16\) similarity classes in the first excited state \((V_L,V_M,V_N)=(1,2,5)\), and
\(5\) similarity classes in the second excited state \((V_L,V_M,V_N)=(2,4,9)\).
Summing up the partial frequencies \(188+128+8\) of these states in Table
\ref{tbl:Gamma}
yields the maximal absolute frequency \(324\) of type \(\gamma\)
among all \(13\) types in the range \(2\le D<10^3\),
as given in Table
\ref{tbl:Statistics}.
The logarithmic subfield unit index of type \(\gamma\)
is restricted to the unique value \(E=6\).
Type \(\gamma\) is the unique type with
\(3\)-dimensional absolute principal factorization, \(A=3\).
However, the \(1\)-dimensional subspace \(\Delta\) is formed by radicals,
and only the complementary \(2\)-dimensional subspace is non-trivial.


\renewcommand{\arraystretch}{1.0}

\begin{table}[ht]
\caption{Splitting of type \(\delta_1\), \((U,\eta,\zeta;A,I,R)=(1,\times,-;1,0,1)\): \(3\) similarity classes}
\label{tbl:Delta1}
\begin{center}
\begin{tabular}{|r|cc|cccr|ccc|cccc|rr|}
\hline
 No. & S  & \(e_0\) & \(t\) & \(u\) & \(v\) & \(m\) & \(n\) & \(s_2\) & \(s_4\) & \(V_L\) & \(V_M\) & \(V_N\) & \(E\) &   \(\mathbf{M}\) & \(\mathbf{\lvert M\rvert}\) \\
\hline
   1 & 1b &   \(2\) & \(1\) & \(0\) & \(1\) & \(1\) & \(0\) &   \(0\) &   \(1\) &   \(3\) &   \(5\) &   \(9\) & \(2\) & \(\mathbf{211}\) &                       \(1\) \\
   2 & 1b &   \(2\) & \(1\) & \(0\) & \(1\) & \(1\) & \(0\) &   \(0\) &   \(1\) &   \(1\) &   \(2\) &   \(3\) & \(4\) & \(\mathbf{421}\) &                       \(5\) \\
   3 & 2  &   \(0\) & \(2\) & \(0\) & \(2\) & \(1\) & \(1\) &   \(0\) &   \(1\) &   \(1\) &   \(2\) &   \(3\) & \(4\) & \(\mathbf{843}\) &                       \(1\) \\
\hline
\end{tabular}
\end{center}
\end{table}

The logarithmic subfield unit index of DPF type \(\delta_1\)
can take two values, either \(E=4\) or \(E=2\).
Type \(\delta_1\) with \(E=4\) splits into
\(2\) similarity classes in the ground state \((V_L,V_M,V_N)=(1,2,3)\).
Type \(\delta_1\) with \(E=2\) consists of
\(1\) similarity class in the ground state \((V_L,V_M,V_N)=(3,5,9)\).
Summing up the partial frequencies \(6+1\) of these states in Table
\ref{tbl:Delta1}
yields the modest absolute frequency \(7\) of type \(\delta_1\) in the range \(2\le D<10^3\),
as given in Table
\ref{tbl:Statistics}.
Type \(\delta_1\) is a type with
\(1\)-dimensional relative principal factorization, \(R=1\).


\renewcommand{\arraystretch}{1.0}

\begin{table}[ht]
\caption{Splitting of type \(\delta_2\), \((U,\eta,\zeta;A,I,R)=(1,\times,-;1,1,0)\): \(5\) similarity classes}
\label{tbl:Delta2}
\begin{center}
\begin{tabular}{|r|cc|cccr|ccc|cccc|rr|}
\hline
 No. & S  & \(e_0\) & \(t\) & \(u\) & \(v\) & \(m\) & \(n\) & \(s_2\) & \(s_4\) & \(V_L\) & \(V_M\) & \(V_N\) & \(E\) &   \(\mathbf{M}\) & \(\mathbf{\lvert M\rvert}\) \\
\hline
   1 & 1b &   \(2\) & \(1\) & \(0\) & \(1\) & \(1\) & \(0\) &   \(1\) &   \(0\) &   \(1\) &   \(1\) &   \(2\) & \(3\) &  \(\mathbf{19}\) &                      \(27\) \\
   2 & 2  &   \(0\) & \(2\) & \(0\) & \(2\) & \(1\) & \(1\) &   \(1\) &   \(0\) &   \(1\) &   \(1\) &   \(2\) & \(3\) &  \(\mathbf{57}\) &                      \(10\) \\
   3 & 1a &   \(6\) & \(1\) & \(0\) & \(1\) & \(4\) & \(0\) &   \(1\) &   \(0\) &   \(1\) &   \(1\) &   \(2\) & \(3\) &  \(\mathbf{95}\) &                       \(9\) \\
   4 & 2  &   \(0\) & \(1\) & \(1'\)& \(0\) & \(1\) & \(0\) &   \(1'\)&   \(0\) &   \(1\) &   \(1\) &   \(2\) & \(3\) & \(\mathbf{149}\) &                       \(6\) \\
   5 & 1b &   \(2\) & \(2\) & \(0\) & \(2\) & \(3\) & \(1\) &   \(1\) &   \(0\) &   \(2\) &   \(3\) &   \(6\) & \(3\) & \(\mathbf{377}\) &                       \(1\) \\
\hline
\end{tabular}
\end{center}
\end{table}

DPF type \(\delta_2\) splits into
\(4\) similarity classes in the ground state \((V_L,V_M,V_N)=(1,1,2)\) and
\(1\) similarity class in the first excited state \((V_L,V_M,V_N)=(2,3,6)\).
Summing up the partial frequencies \(52+1\) of these states in Table
\ref{tbl:Delta2}
yields the considerable absolute frequency \(53\) of type \(\delta_2\) in the range \(2\le D<10^3\),
as given in Table
\ref{tbl:Statistics}.
The logarithmic subfield unit index of type \(\delta_2\)
is restricted to the unique value \(E=3\).
Type \(\delta_2\) is a type with
\(1\)-dimensional intermediate principal factorization, \(I=1\).


\renewcommand{\arraystretch}{1.0}

\begin{table}[ht]
\caption{Splitting of type \(\varepsilon\), \((U,\eta,\zeta;A,I,R)=(1,\times,-;2,0,0)\): \(22\) similarity classes}
\label{tbl:Epsilon}
\begin{center}
\begin{tabular}{|r|cc|cccr|ccc|cccc|rr|}
\hline
 No. & S  & \(e_0\) & \(t\) & \(u\) & \(v\) &  \(m\) & \(n\) & \(s_2\) & \(s_4\) & \(V_L\) & \(V_M\) & \(V_N\) & \(E\) &   \(\mathbf{M}\) & \(\mathbf{\lvert M\rvert}\) \\
\hline
   1 & 1b &   \(2\) & \(1\) & \(0\) & \(1\) &  \(1\) & \(1\) &   \(0\) &   \(0\) &   \(0\) &   \(0\) &   \(0\) & \(5\) &   \(\mathbf{2}\) &                      \(71\) \\
   2 & 1a &   \(6\) & \(1\) & \(0\) & \(1\) &  \(4\) & \(1\) &   \(0\) &   \(0\) &   \(0\) &   \(0\) &   \(0\) & \(5\) &  \(\mathbf{10}\) &                      \(31\) \\
   3 & 2  &   \(0\) & \(2\) & \(0\) & \(2\) &  \(1\) & \(2\) &   \(0\) &   \(0\) &   \(0\) &   \(0\) &   \(0\) & \(5\) &  \(\mathbf{18}\) &                      \(37\) \\
   4 & 1a &   \(6\) & \(2\) & \(1\) & \(1\) & \(16\) & \(2\) &   \(0\) &   \(0\) &   \(1\) &   \(2\) &   \(4\) & \(5\) & \(\mathbf{140}\) &                       \(5\) \\
   5 & 1b &   \(2\) & \(2\) & \(0\) & \(2\) &  \(3\) & \(2\) &   \(0\) &   \(0\) &   \(1\) &   \(2\) &   \(4\) & \(5\) & \(\mathbf{141}\) &                      \(19\) \\
   6 & 1b &   \(2\) & \(2\) & \(0\) & \(2\) &  \(3\) & \(1\) &   \(1\) &   \(0\) &   \(1\) &   \(2\) &   \(4\) & \(5\) & \(\mathbf{171}\) &                       \(6\) \\
   7 & 1a &   \(6\) & \(2\) & \(0\) & \(2\) & \(16\) & \(2\) &   \(0\) &   \(0\) &   \(1\) &   \(2\) &   \(4\) & \(5\) & \(\mathbf{180}\) &                       \(5\) \\
   8 & 2  &   \(0\) & \(3\) & \(1\) & \(2\) &  \(4\) & \(2\) &   \(0\) &   \(0\) &   \(1\) &   \(2\) &   \(4\) & \(5\) & \(\mathbf{182}\) &                       \(1\) \\
   9 & 1b &   \(2\) & \(2\) & \(1\) & \(1\) &  \(4\) & \(1\) &   \(1\) &   \(0\) &   \(1\) &   \(2\) &   \(4\) & \(5\) & \(\mathbf{203}\) &                       \(2\) \\
  10 & 2  &   \(0\) & \(2\) & \(0\) & \(2\) &  \(1\) & \(1\) &   \(1\) &   \(0\) &   \(1\) &   \(2\) &   \(4\) & \(5\) & \(\mathbf{218}\) &                       \(1\) \\
  11 & 1b &   \(2\) & \(3\) & \(1\) & \(2\) & \(12\) & \(3\) &   \(0\) &   \(0\) &   \(2\) &   \(4\) &   \(8\) & \(5\) & \(\mathbf{273}\) &                       \(4\) \\
  12 & 1a &   \(6\) & \(2\) & \(0\) & \(2\) & \(16\) & \(1\) &   \(1\) &   \(0\) &   \(1\) &   \(2\) &   \(4\) & \(5\) & \(\mathbf{290}\) &                       \(2\) \\
  13 & 1b &   \(2\) & \(2\) & \(0\) & \(2\) &  \(3\) & \(1\) &   \(1\) &   \(0\) &   \(1\) &   \(2\) &   \(4\) & \(5\) & \(\mathbf{298}\) &                       \(2\) \\
  14 & 1b &   \(2\) & \(2\) & \(1\) & \(1\) &  \(4\) & \(2\) &   \(0\) &   \(0\) &   \(1\) &   \(2\) &   \(4\) & \(5\) & \(\mathbf{329}\) &                       \(7\) \\
  15 & 1b &   \(2\) & \(3\) & \(0\) & \(3\) & \(13\) & \(2\) &   \(1\) &   \(0\) &   \(2\) &   \(4\) &   \(8\) & \(5\) & \(\mathbf{348}\) &                       \(2\) \\
  16 & 1b &   \(2\) & \(1\) & \(0\) & \(1\) &  \(1\) & \(0\) &   \(1\) &   \(0\) &   \(1\) &   \(2\) &   \(4\) & \(5\) & \(\mathbf{379}\) &                       \(1\) \\
  17 & 1b &   \(2\) & \(2\) & \(0\) & \(2\) &  \(3\) & \(1\) &   \(0\) &   \(1\) &   \(1\) &   \(2\) &   \(4\) & \(5\) & \(\mathbf{422}\) &                       \(6\) \\
  18 & 2  &   \(0\) & \(3\) & \(1\) & \(2\) &  \(4\) & \(2\) &   \(1\) &   \(0\) &   \(1\) &   \(2\) &   \(4\) & \(5\) & \(\mathbf{532}\) &                       \(1\) \\
  19 & 1a &   \(6\) & \(1\) & \(0\) & \(1\) &  \(4\) & \(0\) &   \(1\) &   \(0\) &   \(1\) &   \(2\) &   \(4\) & \(5\) & \(\mathbf{695}\) &                       \(1\) \\
  20 & 1b &   \(2\) & \(3\) & \(0\) & \(3\) & \(13\) & \(3\) &   \(0\) &   \(0\) &   \(2\) &   \(4\) &   \(8\) & \(5\) & \(\mathbf{702}\) &                       \(2\) \\
  21 & 2  &   \(0\) & \(2\) & \(2\) & \(0\) &  \(4\) & \(2\) &   \(0\) &   \(0\) &   \(1\) &   \(2\) &   \(4\) & \(5\) & \(\mathbf{749}\) &                       \(1\) \\
  22 & 1a &   \(6\) & \(1\) & \(1\) & \(0\) &  \(4\) & \(1\) &   \(0\) &   \(0\) &   \(1\) &   \(2\) &   \(4\) & \(5\) & \(\mathbf{785}\) &                       \(1\) \\
\hline
\end{tabular}
\end{center}
\end{table}

DPF type \(\varepsilon\) splits into
\(3\) similarity classes in the ground state \((V_L,V_M,V_N)=(0,0,0)\),
\(16\) similarity classes in the first excited state \((V_L,V_M,V_N)=(1,2,4)\), and
\(3\) similarity classes in the second excited state \((V_L,V_M,V_N)=(2,4,8)\).
Summing up the partial frequencies \(139+61+8\) of these states in Table
\ref{tbl:Epsilon}
yields the high absolute frequency \(208\) of type \(\varepsilon\) in the range \(2\le D<10^3\),
as given in Table
\ref{tbl:Statistics}.
The logarithmic subfield unit index of type \(\varepsilon\)
is restricted to the unique value \(E=5\).
Type \(\varepsilon\) is a type with
\(2\)-dimensional absolute principal factorization, \(A=2\).


\renewcommand{\arraystretch}{1.0}

\begin{table}[ht]
\caption{No splitting of type \(\zeta_1\), \((U,\eta,\zeta;A,I,R)=(1,-,\times;1,0,1)\): \(1\) similarity class}
\label{tbl:Zeta1}
\begin{center}
\begin{tabular}{|r|cc|cccr|ccc|cccc|rr|}
\hline
 No. & S  & \(e_0\) & \(t\) & \(u\) & \(v\) & \(m\) & \(n\) & \(s_2\) & \(s_4\) & \(V_L\) & \(V_M\) & \(V_N\) & \(E\) &   \(\mathbf{M}\) & \(\mathbf{\lvert M\rvert}\) \\
\hline
   1 & 2  &   \(0\) & \(1\) & \(1'\)& \(0\) & \(1\) & \(0\) &   \(0\) &   \(1'\)&   \(1\) &   \(2\) &   \(4\) & \(5\) & \(\mathbf{101}\) &                       \(1\) \\
\hline
\end{tabular}
\end{center}
\end{table}

The logarithmic subfield unit index of DPF type \(\zeta_1\)
is restricted to the unique value \(E=5\).
Type \(\zeta_1\) consists of
\(1\) similarity class in the ground state \((V_L,V_M,V_N)=(1,2,4)\).
The frequency \(1\) of this state in Table
\ref{tbl:Zeta1}
coincides with the negligible absolute frequency \(1\) of type \(\zeta_1\) in the range \(2\le D<10^3\),
as given in Table
\ref{tbl:Statistics}.
Type \(\zeta_1\) is a type with
\(1\)-dimensional relative principal factorization, \(R=1\).


\renewcommand{\arraystretch}{1.0}

\begin{table}[ht]
\caption{Splitting of type \(\zeta_2\), \((U,\eta,\zeta;A,I,R)=(1,-,\times;1,1,0)\): \(3\) similarity classes}
\label{tbl:Zeta2}
\begin{center}
\begin{tabular}{|r|cc|cccr|ccc|cccc|rr|}
\hline
 No. & S  & \(e_0\) & \(t\) & \(u\) & \(v\) & \(m\) & \(n\) & \(s_2\) & \(s_4\) & \(V_L\) & \(V_M\) & \(V_N\) & \(E\) &   \(\mathbf{M}\) & \(\mathbf{\lvert M\rvert}\) \\
\hline
   1 & 1a &   \(6\) & \(1\) & \(1'\)& \(0\) & \(4\) & \(0\) &   \(0\) &   \(1'\)&   \(1\) &   \(1\) &   \(3\) & \(4\) & \(\mathbf{505}\) &                       \(2\) \\
   2 & 2  &   \(0\) & \(2\) & \(2'\)& \(0\) & \(4\) & \(1\) &   \(0\) &   \(1'\)&   \(1\) &   \(1\) &   \(3\) & \(4\) & \(\mathbf{707}\) &                       \(1\) \\
   3 & 1a &   \(6\) & \(1\) & \(1\) & \(0\) & \(4\) & \(0\) &   \(1\) &   \(0\) &   \(1\) &   \(1\) &   \(3\) & \(4\) & \(\mathbf{745}\) &                       \(2\) \\
\hline
\end{tabular}
\end{center}
\end{table}

DPF type \(\zeta_2\) consists of \(3\) similarity classes.
The modest absolute frequency \(5\) of type \(\zeta_2\) in the range \(2\le D<10^3\),
given in Table
\ref{tbl:Statistics},
is the sum \(2+1+2\) of partial frequencies in Table
\ref{tbl:Zeta2}.
Type \(\zeta_2\) only occurs with logarithmic subfield unit index \(E=4\).
It is a type with
\(1\)-dimensional intermediate principal factorization, \(I=1\).


\renewcommand{\arraystretch}{1.0}

\begin{table}[ht]
\caption{Splitting of type \(\eta\), \((U,\eta,\zeta;A,I,R)=(1,-,\times;2,0,0)\): \(2\) similarity classes}
\label{tbl:Eta}
\begin{center}
\begin{tabular}{|r|cc|cccr|ccc|cccc|rr|}
\hline
 No. & S  & \(e_0\) & \(t\) & \(u\) & \(v\) & \(m\) & \(n\) & \(s_2\) & \(s_4\) & \(V_L\) & \(V_M\) & \(V_N\) & \(E\) &   \(\mathbf{M}\) & \(\mathbf{\lvert M\rvert}\) \\
\hline
   1 & 1a &   \(6\) & \(1\) & \(1\) & \(0\) & \(4\) & \(1\) &   \(0\) &   \(0\) &   \(0\) &   \(0\) &   \(1\) & \(6\) &  \(\mathbf{35}\) &                       \(6\) \\
   2 & 2  &   \(0\) & \(2\) & \(2\) & \(0\) & \(4\) & \(2\) &   \(0\) &   \(0\) &   \(0\) &   \(0\) &   \(1\) & \(6\) & \(\mathbf{301}\) &                       \(1\) \\
\hline
\end{tabular}
\end{center}
\end{table}

DPF type \(\eta\) splits in \(2\) similarity classes,
\(\lbrack\mathbf{35}\rbrack\) and \(\lbrack\mathbf{301}\rbrack\).
The modest absolute frequency \(7\) of type \(\eta\) in the range \(2\le D<10^3\),
given in Table
\ref{tbl:Statistics},
is the sum \(6+1\) of partial frequencies in Table
\ref{tbl:Eta}.
Type \(\eta\) only occurs with logarithmic subfield unit index \(E=6\).
It is a type with
\(2\)-dimensional absolute principal factorization, \(A=2\).
However, it should be pointed out that
outside of the range of our systematic investigations
we found an excited state \((V_L,V_M,V_N)=(1,2,5)\)
for the similarity class \(\mathbf{\lbrack 1505\rbrack}\),
where \(1505=5\cdot 7\cdot 43\) has three prime divisors,
additionally to the ground state \((V_L,V_M,V_N)=(0,0,1)\).


\renewcommand{\arraystretch}{1.0}

\begin{table}[ht]
\caption{Splitting of type \(\vartheta\), \((U,\eta,\zeta;A,I,R)=(0,\times,\times;1,0,0)\): \(2\) similarity classes}
\label{tbl:Theta}
\begin{center}
\begin{tabular}{|r|cc|cccr|ccc|cccc|rr|}
\hline
 No. & S  & \(e_0\) & \(t\) & \(u\) & \(v\) & \(m\)  & \(n\) & \(s_2\) & \(s_4\) & \(V_L\) & \(V_M\) & \(V_N\) & \(E\) & \(\mathbf{M}\) & \(\mathbf{\lvert M\rvert}\) \\
\hline
   1 & 1a &   \(6\) & \(0\) & \(0\) & \(0\) & \(1\)  & \(0\) &   \(0\) &   \(0\) &   \(0\) &   \(0\) &   \(0\) & \(5\) & \(\mathbf{5}\) &                       \(1\) \\
   2 & 2  &   \(0\) & \(1\) & \(1\) & \(0\) & \(1\)  & \(1\) &   \(0\) &   \(0\) &   \(0\) &   \(0\) &   \(0\) & \(5\) & \(\mathbf{7}\) &                      \(18\) \\
\hline
\end{tabular}
\end{center}
\end{table}

DPF type \(\vartheta\) splits into
the unique finite similarity class \(\lbrack\mathbf{5}\rbrack\) with only a single element
and the infinite parametrized sequence \(\lbrack\mathbf{7}\rbrack\)
consisting of all prime radicands \(D=q\) congruent to \(\pm 7\,(\mathrm{mod}\,25)\).
The small absolute frequency \(19\) of type \(\vartheta\) in the range \(2\le D<10^3\),
given in Table
\ref{tbl:Statistics},
is the sum \(\mathbf{\lvert 5\rvert}+\mathbf{\lvert 7\rvert}=1+18\) in Table
\ref{tbl:Theta}.
Since no theoretical argument disables the occurrence of type \(\vartheta\)
for composite radicands \(D\) with prime factors \(5\) and \(q\equiv\pm 7\,(\mathrm{mod}\,25)\),
we conjecture that such cases will appear in bigger ranges with \(D>10^3\).
Type \(\vartheta\) only occurs with logarithmic subfield unit index \(E=5\),
and is the unique type where
every unit of \(K\) occurs as norm of a unit of \(N\), that is \(U=0\).


\subsection{Increasing dominance of DPF type \(\gamma\) for \(T\to\infty\)}
\label{ss:ManyPrmDiv}
\noindent
In this final section, we want to show that the careful book keeping
of similarity classes with representative prototypes in the Tables
\ref{tbl:Alpha1}
--
\ref{tbl:Theta}
is useful for the quantitative illumination of many other phenomena.
For an explanation, we select the phenomenon of \textit{absolute} principal factorizations.

The statistical distribution of DPF types in Table
\ref{tbl:Statistics}
has proved that \textit{type} \(\gamma\) with \(324\) occurrences, that is \(36\,\%\),
among all \(900\) fields \(N=\mathbb{Q}(\zeta_5,\sqrt[5]{D})\)
with normalized radicands in the range \(2\le D<10^3\)
is doubtlessly the \textit{high champion} of all DPF types.
This means that there is a clear trend towards
the maximal possible extent of \(3\)-dimensional spaces of
\textit{absolute} principal factorizations, \(A=3\),
in spite of the disadvantage that the estimate \(1\le A\le\min(3,T)\)
in the formulas (4.3) and (4.4) of
\cite[Cor. 4.1]{Ma2a}
prohibits type \(\gamma\) for conductors \(f\) with \(T\le 2\) prime divisors.

For the following investigation,
we have to recall that the number \(T\) of all prime factors of \(f^4=5^{e_0}\cdot q_1^4\ldots q_t^4\)
is given by \(T=t+1\) for fields of Dedekind's species \(1\), where \(e_0\in\lbrace 2,6\rbrace\),
and by \(T=t\) for fields of Dedekind's species \(2\), where \(e_0=0\).

Conductors \(f\) with \(T=4\) prime factors occur in six tables, \\
\(1\) case of type \(\alpha_2\) in a single similarity class of Table
\ref{tbl:Alpha2}, \\
\(1\) case of type \(\alpha_3\) in a single similarity class of Table
\ref{tbl:Alpha3}, \\
\(7\) cases of type \(\beta_1\) in a single similarity class of Table
\ref{tbl:Beta1}, \\
\(10\) cases of type \(\beta_2\) in \(5\) similarity classes of Table
\ref{tbl:Beta2}, \\
\(126\) cases of type \(\gamma\) in \(16\) similarity classes of Table
\ref{tbl:Gamma}, \\
\(8\) cases of type \(\varepsilon\) in \(3\) similarity classes of Table
\ref{tbl:Epsilon}, \\
that is, a total of \(153\) cases,
with respect to the complete range \(2\le D<10^3\) of our computations.
Consequently, we have an increase of type \(\gamma\)
from \(36.0\,\%\), with respect to the entire database,
to \(\frac{126}{153}=82.4\,\%\), with respect to \(T=4\).

The feature is even aggravated for conductors \(f\) with \(T=5\) prime factors,
which exclusively occur in Table
\ref{tbl:Gamma}.
There are \(4\) similarity classes with \(T=5\), namely
\(\mathbf{\lbrack 462\rbrack}\), \(\mathbf{\lbrack 546\rbrack}\), \(\mathbf{\lbrack 798\rbrack}\), \(\mathbf{\lbrack 858\rbrack}\),
with a total of \(6\) elements, all \((100\,\%)\) with associated fields of type \(\gamma\).


\section{Acknowledgements}
\label{s:Thanks}

\noindent
We gratefully acknowledge that our research was supported by the Austrian Science Fund (FWF):
projects J 0497-PHY and P 26008-N25.
This work is dedicated to the memory of Charles J. Parry (\(\dagger\) 25 December 2010)
who suggested a numerical investigation of pure quintic number fields.


\begin{figure}[ht]
\caption{Lattices of subfields of \(N\) and of subgroups of \(G=\mathrm{Gal}(N/\mathbb{Q})\)}
\label{fig:GaloisCorrespondence}

{\tiny

\setlength{\unitlength}{1.0cm}
\begin{picture}(15,7)(-11,-8.5)

\put(-10,-2){\makebox(0,0)[cb]{Degree}}
\put(-10,-4){\vector(0,1){1.5}}
\put(-10,-9){\line(0,1){5}}
\multiput(-10.1,-9)(0,1){6}{\line(1,0){0.2}}

\put(-10.2,-4){\makebox(0,0)[rc]{\(20\)}}
\put(-9.8,-4){\makebox(0,0)[lc]{metacyclic}}
\put(-10.2,-5){\makebox(0,0)[rc]{\(10\)}}
\put(-9.8,-5){\makebox(0,0)[lc]{conjugates of maximal real}}
\put(-10.2,-6){\makebox(0,0)[rc]{\(5\)}}
\put(-9.8,-6){\makebox(0,0)[lc]{conjugates of pure quintic}}
\put(-10.2,-7){\makebox(0,0)[rc]{\(4\)}}
\put(-9.8,-7){\makebox(0,0)[lc]{cyclotomic}}
\put(-10.2,-8){\makebox(0,0)[rc]{\(2\)}}
\put(-9.8,-8){\makebox(0,0)[lc]{real quadratic}}
\put(-10.2,-9){\makebox(0,0)[rc]{\(1\)}}
\put(-9.8,-9){\makebox(0,0)[lc]{rational}}

{\normalsize
\put(-1,-2){\makebox(0,0)[cc]{Galois Correspondence}}
\put(-1,-2.5){\makebox(0,0)[cc]{\(\longleftrightarrow\)}}
}



\put(-6,-9){\circle*{0.2}}
\put(-6,-9.2){\makebox(0,0)[ct]{\(\mathbb{Q}\)}}

\put(-6,-9){\line(2,1){2}}

\put(-4,-8){\circle*{0.2}}
\put(-4,-8.2){\makebox(0,0)[ct]{\(K^+\)}}

\put(-4,-8){\line(2,1){2}}

\put(-2,-7){\circle*{0.2}}
\put(-2,-7.2){\makebox(0,0)[ct]{\(K\)}}



\put(0,-9){\circle*{0.2}}
\put(0,-9.2){\makebox(0,0)[ct]{\(G=\langle\sigma,\tau\rangle\)}}

\put(0,-9){\line(2,1){2}}

\put(2,-8){\circle*{0.2}}
\put(2,-8.2){\makebox(0,0)[ct]{\(\langle\sigma,\tau^2\rangle\)}}

\put(2,-8){\line(2,1){2}}

\put(4,-7){\circle*{0.2}}
\put(4,-7.2){\makebox(0,0)[ct]{\(\langle\sigma\rangle\)}}


\put(-6,-9){\line(0,1){3}}
\put(-4,-8){\line(0,1){3}}
\put(-2,-7){\line(0,1){3}}

\put(0,-9){\line(0,1){3}}
\put(2,-8){\line(0,1){3}}
\put(4,-7){\line(0,1){3}}



\put(-6,-6){\circle{0.2}}
\put(-6,-5.8){\makebox(0,0)[cb]{\(L\)}}
\put(-5.8,-6.2){\makebox(0,0)[lc]{\(L^{\sigma},\ldots,L^{\sigma^4}\)}}

\put(-6,-6){\line(2,1){2}}

\put(-4,-5){\circle{0.2}}
\put(-4,-4.8){\makebox(0,0)[cb]{\(M\)}}
\put(-3.8,-5.2){\makebox(0,0)[lc]{\(M^{\sigma},\ldots,M^{\sigma^4}\)}}

\put(-4,-5){\line(2,1){2}}

\put(-2,-4){\circle*{0.2}}
\put(-2,-3.8){\makebox(0,0)[cb]{\(N\)}}



\put(0,-6){\circle{0.2}}
\put(0,-5.8){\makebox(0,0)[cb]{\(\langle\tau\rangle\)}}
\put(0.2,-6.2){\makebox(0,0)[lc]{\(\langle\sigma^{-i}\tau\sigma^i\rangle\)}}
\put(0.2,-6.5){\makebox(0,0)[lc]{\(1\le i\le 4\)}}

\put(0,-6){\line(2,1){2}}

\put(2,-5){\circle{0.2}}
\put(2,-4.8){\makebox(0,0)[cb]{\(\langle\tau^2\rangle\)}}
\put(2.2,-5.2){\makebox(0,0)[lc]{\(\langle\sigma^{-i}\tau^2\sigma^i\rangle\)}}
\put(2.2,-5.5){\makebox(0,0)[lc]{\(1\le i\le 4\)}}

\put(2,-5){\line(2,1){2}}

\put(4,-4){\circle*{0.2}}
\put(4,-3.8){\makebox(0,0)[cb]{\(1\)}}


\end{picture}

}

\end{figure}
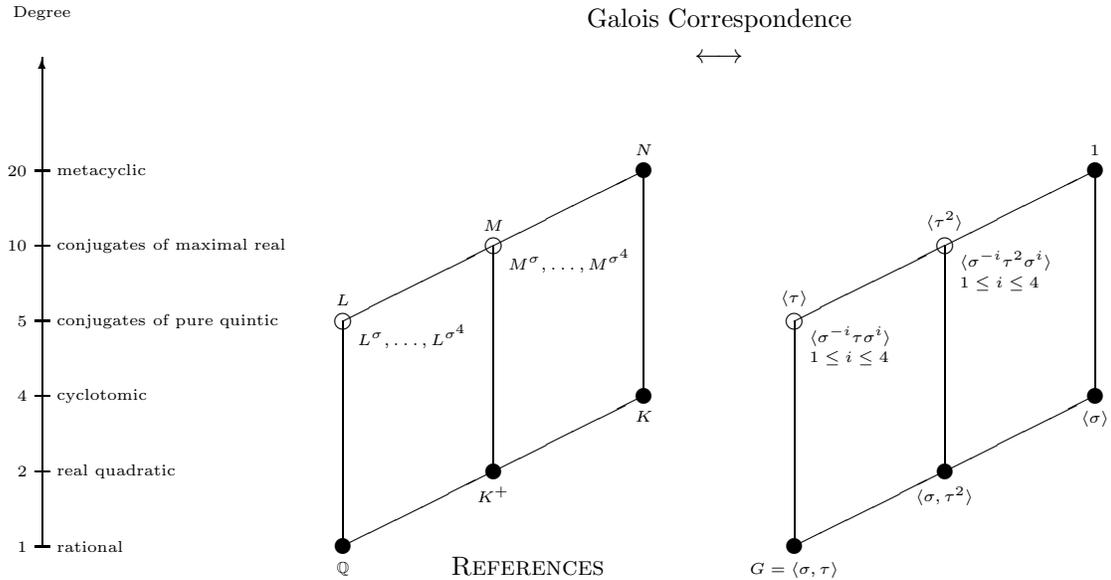



\end{document}